\newtheorem{theorem}{Theorem}[section]
\newtheorem{lemma}[theorem]{Lemma}
\theoremstyle{definition}
\newtheorem{defn}[theorem]{Definition}
\newtheorem{example}[theorem]{Example}
\theoremstyle{remark}
\newtheorem{conj}[theorem]{Conjecture}
\newtheorem{ques}[theorem]{Question}
\numberwithin{equation}{section}
\newcommand{\E}{\mathbb{E}}
\newcommand{\M}{M^{\mathbb{N}}}
\def\imod#1{\allowbreak\mkern10mu({\operator@font mod}\,\,#1)}
\begin{document}
\pagestyle{myheadings}
\markboth{I. Assani and K. Presser}{Survey of the Return Times Theorem}

\title{A Survey of the Return Times Theorem}
\author{Idris Assani (University of North Carolina at Chapel Hill) \\
 Kimberly Presser (Shippensburg University)}

\maketitle

\begin{abstract}
The goal of this paper is to survey the history, development and current status of the Return Times Theorem and its many extensions and variations.  Let $(X, \mathcal{F}, \mu)$ be a finite measure space and let $T:X \rightarrow X$ be a measure preserving transformation.  Perhaps the oldest result in ergodic theory is that of \textbf{Poincar\'e's Recurrence Principle} \cite{Poin} which states:

\begin{theorem}\label{Poin}
For any set $A \in \mathcal{F}$, the set of points $x$ of $A$ such that $T^nx \notin A$ for all $n > 0$ has zero measure.  This says that almost every point of $A$ returns to $A$.  In fact, almost every point of $A$ returns to $A$ infinitely often.
\end{theorem}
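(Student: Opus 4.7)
The plan is to reduce the statement to a clean disjointness argument using the measure-preserving property together with finiteness of $\mu$. Introduce the ``bad'' set
\[
B \;=\; \{x \in A : T^n x \notin A \text{ for all } n > 0\},
\]
and aim to show $\mu(B) = 0$. The idea is that if $\mu(B)$ were positive, then the preimages $T^{-n}B$ would form infinitely many pairwise disjoint sets of equal positive measure, which cannot fit inside a finite measure space.

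To execute this, first I would verify that the sets $\{T^{-n}B\}_{n \geq 0}$ are pairwise disjoint. Indeed, if $x \in T^{-n}B \cap T^{-m}B$ with $0 \leq n < m$, then $T^n x \in B$ and $T^m x = T^{m-n}(T^n x) \in B \subseteq A$, contradicting the defining property of $B$ applied to $T^n x$ with the positive exponent $m-n$. Since $T$ preserves $\mu$, we get $\mu(T^{-n}B) = \mu(B)$ for every $n \geq 0$, and disjointness together with $\mu(X) < \infty$ forces $\mu(B) = 0$.

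For the stronger ``infinitely often'' conclusion, I would bootstrap from the first part rather than redo the argument. Let $D \subseteq A$ be the set of points returning to $A$ only finitely many times, and for each $x \in D$ let $N(x) \geq 0$ be the largest index with $T^{N(x)} x \in A$ (taking $N(x) = 0$ if $x$ never returns). By maximality, $T^{N(x)} x \in B$, so
\[
D \;\subseteq\; \bigcup_{N \geq 0} T^{-N} B,
\]
a countable union of null sets, hence $\mu(D) = 0$. The only real subtlety in the whole argument is recognizing that the finiteness of $\mu$ is indispensable: without it the disjointness of the $T^{-n}B$ gives no contradiction, and the conclusion can genuinely fail (as translation by $1$ on the line shows). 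Beyond that observation, the proof is essentially a pigeonhole, and I do not anticipate a substantive obstacle.
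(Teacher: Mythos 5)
Your argument is correct and complete: the disjointness of the preimages $T^{-n}B$, the pigeonhole via $\mu(X)<\infty$, and the bootstrap to the ``infinitely often'' statement through the inclusion $D\subseteq\bigcup_{N\geq 0}T^{-N}B$ all check out. The paper states this theorem without proof (citing Poincar\'e), and what you have written is precisely the standard argument that the classical references give, so there is nothing to reconcile; your remark that finiteness of $\mu$ is essential is also apt and is echoed later in the paper's discussion of the $\sigma$-finite case.
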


The \textbf{return time} for a given element $x \in A$,
$$r_A(x) = \inf\{k \geq 1: T^kx \in A\},$$
is the first time that the element $x$ returns to the set $A$.  This is visualized in the figure below.

\begin{center}
\includegraphics[height=2in]{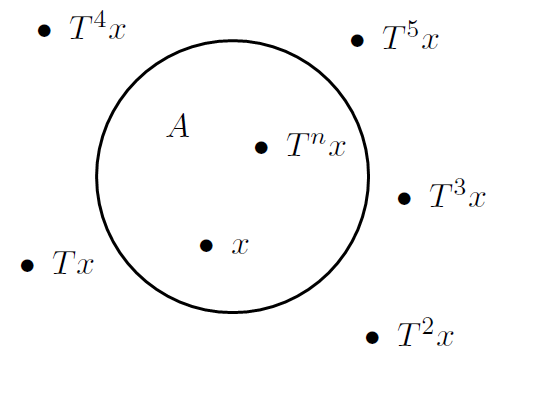}
\end{center}

By \textbf{Theorem \ref{Poin}}, there is set of full measure in $A$ such that all elements of this set have a finite return time.  Our study of the return times theorem asks how we can further generalize this notion.

\end{abstract}

\maketitle

\tableofcontents

\section{Origins}

As stated above in \textbf{Theorem \ref{Poin}}, $\mu$-a.e. $x$ returns to $A$ infinitely often.  One question to ask is how frequently this occurs.  Consider the time average $$\frac{1}{N}\sum_{n=1}^N \chi_A(T^nx).$$
This is a measure of how frequently the element $x$ returns to $A$.  We can then define a sequence recursively which characterizes the times that $x$ returns to $A$.

\begin{defn}\label{return}
The \textbf{return times sequence for $x$ with respect to the set $A$} is defined as
$$k_1(x,A) = \inf\{n:T^nx \in A\}$$
and
$$k_i(x,A) = \inf\{n > k_{i-1}(x,A):T^nx \in A\}.$$
\end{defn}

Thus the time average $\displaystyle \frac{1}{N}\sum_{n=1}^N \chi_A(T^nx)$ is simply a measure of the density of the sequence $k_i(x,A)$ in the set $\{1,...,N\}$.

Recall the following definition of an ergodic measure-preserving system.   \begin{defn}\label{ergodic}
The finite measure-preserving system $(X,\mathcal{F}, T, \mu)$ is \textbf{ergodic} if $T^{-1}(B) = B$ for $B \in \mathcal{F}$ implies $\mu(B) = 0$ or $\mu(B) = \mu(X)$.  In this case $T$ is called an \textbf{ergodic transformation}.
\end{defn}

Using \textbf{Birkhoff's Pointwise Ergodic Theorem} \cite{Birkhoff}, we know something more about the density of this sequence.

\begin{theorem}\label{Birk}
If $T$ is a measure preserving map on the finite measure space $(X,\mathcal{F},\mu)$ then for each real-valued $f\in L^1(\mu)$,
$$\lim_N\frac{1}{N}\sum_{n=1}^Nf\left(T^nx\right)= \E(f|Inv)(x),\mu-a.e.,$$
where $Inv = \{B \in \mathcal{F}:T^{-1}(B) = B\}$.  In particular, the limiting function has the same integral as $f$ and $\E(f|Inv)(x)=\int_Xfd\mu$ for all $x\in X$ when $T$ is ergodic.
\end{theorem}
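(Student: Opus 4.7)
The plan is to follow the classical route via the Maximal Ergodic Theorem of Hopf. Writing $A_N f(x) = \frac{1}{N}\sum_{n=1}^N f(T^n x)$, $\overline{f}(x) = \limsup_N A_N f(x)$, and $\underline{f}(x) = \liminf_N A_N f(x)$, the theorem reduces to showing $\overline f = \underline f$ almost everywhere and identifying the common value as $\E(f \mid \mathrm{Inv})$. Both $\overline f$ and $\underline f$ are automatically $T$-invariant, since $|A_N f(Tx) - A_N f(x)| \le (|f(T^{N+1}x)| + |f(Tx)|)/N \to 0$ at a.e.\ $x$.

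First I would prove the \textbf{Maximal Ergodic Theorem}: for $g \in L^1(\mu)$, setting $S_n g = \sum_{k=1}^{n} g \circ T^k$ and $E = \{x : \sup_{n \ge 1} S_n g(x) > 0\}$, one has $\int_E g\, d\mu \ge 0$. The standard combinatorial proof considers $M_N g = \max_{1 \le n \le N}(S_n g)^+$, establishes the pointwise inequality $g \ge M_N g - M_N g \circ T$ on $\{M_N g > 0\}$, integrates over that set, uses $T$-invariance of $\mu$ to cancel, and lets $N \to \infty$ by monotone convergence. Applying this to $|f| - \lambda \chi_B$ with $B \in \mathrm{Inv}$ yields the weak-type bound $\mu\{\sup_N A_N|f| > \lambda\} \le \|f\|_1/\lambda$.

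Next I would use a density argument. On the subclass $\mathcal D = \{h + (u - u \circ T) : h \in L^\infty \text{ invariant},\ u \in L^\infty\}$, convergence of $A_N$ is immediate: invariant functions are fixed, while coboundaries telescope via $A_N(u - u \circ T) = (u \circ T - u \circ T^{N+1})/N \to 0$. Density of $\mathcal D$ in $L^1(\mu)$ follows from the Hilbert-space splitting used in the mean ergodic theorem combined with $L^\infty$-truncation. For arbitrary $f \in L^1$ and $\varepsilon > 0$, pick $g \in \mathcal D$ with $\|f - g\|_1 < \varepsilon^2$; since $\overline f - \underline f \le 2 \sup_N A_N|f-g|$, the weak-type bound gives
$$\mu\bigl\{x : \overline f(x) - \underline f(x) > 2\varepsilon\bigr\} \le \mu\bigl\{\sup_N A_N|f-g| > \varepsilon\bigr\} \le \varepsilon,$$
and letting $\varepsilon \to 0$ forces $\overline f = \underline f$ a.e.

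Finally, the a.e.\ limit $\tilde f := \lim_N A_N f$ is $T$-invariant, hence $\mathrm{Inv}$-measurable. For any $B \in \mathrm{Inv}$, invariance of $B$ together with measure preservation yields $\int_B A_N f\, d\mu = \int_B f\, d\mu$; passing to the limit via the maximal function and dominated convergence (after truncating $f$) gives $\int_B \tilde f\, d\mu = \int_B f\, d\mu$, which characterizes $\tilde f = \E(f \mid \mathrm{Inv})$. In the ergodic case $\mathrm{Inv}$ is $\mu$-trivial and this reduces to the constant $\int_X f\, d\mu$. The main obstacle is the Maximal Ergodic Theorem itself; the telescoping inequality on $\{M_N g > 0\}$ is the one genuinely combinatorial step, while everything afterward is soft density and standard conditional-expectation bookkeeping.
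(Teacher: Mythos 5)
Your proposal is correct, but note that the paper does not actually prove this statement: Theorem \ref{Birk} is quoted as classical background, with the proof attributed to Birkhoff \cite{Birkhoff} (for characteristic functions on a manifold) and to Khintchine \cite{Khin} for the extension to integrable $f$ on an abstract finite measure space. Your route --- Hopf's Maximal Ergodic Theorem proved by the Garsia telescoping trick, followed by a density argument over $\mathcal{D} = \{h + (u - u\circ T)\}$ and identification of the limit as $\E(f\mid \mathrm{Inv})$ --- is the standard modern proof and is genuinely different in structure from Birkhoff's original argument, which works directly with the sets where $\limsup$ and $\liminf$ of the averages differ rather than through a maximal inequality. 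What your approach buys is exactly what Khintchine's extension needed: the weak-type $(1,1)$ bound lets the convergence pass from the dense class to all of $L^1(\mu)$ with no smoothness or topological structure on $X$. Two small points deserve the care you only gesture at: the a.e.\ $T$-invariance of $\overline f$ and $\underline f$ requires $f(T^{N+1}x)/N \to 0$ a.e., which is not automatic but follows from Borel--Cantelli applied to $\sum_N \mu\{|f| > \varepsilon N\} < \infty$ for $f \in L^1$ on a finite measure space; and in the final identification $\int_B \tilde f\,d\mu = \int_B f\,d\mu$ you cannot invoke dominated convergence directly, since $\sup_N A_N|f|$ need not be integrable for $f \in L^1$ --- the truncation you mention must be combined with the Fatou estimate $\|\tilde f - \widetilde{f_M}\|_1 \le \|f - f_M\|_1$ to close the argument. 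With those details filled in, the proof is complete.
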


For a given factor $\mathcal{Y}$ of $\mathcal{F}$, the notation $\E(f | \mathcal{Y})$ or $P_{\mathcal{Y}}$ refers to the conditional expectation of $f$ with respect to the factor $\mathcal{Y}$.  It should be noted that Birkhoff's original proof is when $f$ is a characteristic function (as in the case we are considering above) in the setting of a closed analytic manifold having a finite invariant measure.  Khintchine \cite{Khin} showed that \textbf{Theorem \ref{Birk}} remains true for an integrable $f$ on an abstract finite measure space.  The essence of the proof of Khintchine is that found in Birkhoff's proof.

Applying \textbf{Theorem \ref{Birk}} to our time average above, when $T$ is ergodic then for $\mu$-a.e. $x \in X$
$$\lim_N\frac{1}{N}\sum_{n=1}^N \chi_A(T^nx) = \int_X \chi_Ad\mu = \mu(A).$$
Thus, whenever $A$ has positive measure the sequence $k_i(x,A)$ has positive density for almost every $x \in X$.

\subsection{Averages along Subsequences}

The study of time averages can be extended by looking at the convergence of the averages along some sequence $k_i$:
\begin{equation}
\lim_{n}\frac{1}{n}\sum_{i=1}^{n}g(S^{k_i}y).\label{Average}
\end{equation}

In this case we want to know what properties of the $k_i$ would guarantee a.e. convergence.  In studying such sequences we can consider either deterministic sequences such as the sequence of squares or primes as well as randomly generated sequences such as those that come from looking at the values $n$ for which $\chi_A(T^nx) = 1$.  We are interested in finding when a randomly generated sequence will give convergence of the averages in equation (\ref{Average}).

In 1969, A. Brunel and M. Keane \cite{BK} use sequences of this type to determine a more powerful result concerning time averages along randomly generated subsequences.  The result of interest is Theorem 1 on page 3 of their paper.  They follow up this result by looking at norm convergence of similar averages as well.  It has been indicated by Michael Keane that the main idea for this return times average was a part of Antoine Brunel's thesis \cite{Brunel}.  Their result states

\begin{theorem}\label{Brunel}
If  $(Y,\mathcal{G},\nu,S)$ is a measure preserving system and if  $g \in L^1(\nu)$ and if $k_1, k_2, \ldots$ is a uniform sequence, then
$$\tilde{g}(y) = \lim_{n}\frac{1}{n}\sum_{i=1}^{n}g(S^{k_i}y)$$
exists almost everywhere and $\tilde{g}\in L^1(\nu)$.
\end{theorem}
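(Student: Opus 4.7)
The plan is to pass to the product system and invoke Birkhoff's Pointwise Ergodic Theorem, then use the strict ergodicity built into the definition of a uniform sequence to upgrade an a.e.\ statement in $x$ to a statement valid at the specific generic point $x_0$ that defines the sequence. Recall that $(k_i)$ being uniform means there is a strictly (uniquely) ergodic homeomorphism $T$ of a compact metric space $X$ with invariant measure $\mu$, a point $x_0 \in X$, and a Jordan measurable set $A \subset X$ (i.e.\ $\mu(\partial A)=0$) with $\{k_i\} = \{n\ge 1: T^n x_0 \in A\}$.

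First I would form the product system $(X\times Y, \mu\times\nu, T\times S)$ and the integrable function $F(x,y) = \chi_A(x)\, g(y)$. For the averages
\[
A_N(x,y) = \frac{1}{N}\sum_{n=1}^{N} \chi_A(T^n x)\, g(S^n y),
\]
applying Theorem \ref{Birk} to $F$ on the product yields $(\mu\times\nu)$-a.e.\ convergence to $\E(F \mid \mathrm{Inv}_{T\times S})$. Since, by unique ergodicity of $T$ applied to continuous approximations of $\chi_A$, the visit density $\#\{i: k_i\le N\}/N$ tends to $\mu(A)>0$ for \emph{every} starting point $x_0$, the identity
\[
\frac{1}{N}\sum_{n=1}^{N} \chi_A(T^n x_0)\, g(S^n y) \;=\; \frac{\#\{i : k_i\le N\}}{N}\cdot \frac{1}{\#\{i : k_i\le N\}}\sum_{i : k_i\le N} g(S^{k_i} y)
\]
would immediately give existence of $\tilde g(y)$ once we know $A_N(x_0,y)$ converges for $\nu$-a.e.\ $y$.

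The delicate step is therefore decoupling the variable $x$ from the Birkhoff null set so that one may evaluate at $x=x_0$. Following Brunel and Keane, I would first do this for a countable dense family of continuous test functions $h:X\times Y\to\mathbb R$: Birkhoff on the product gives a null set $N_h\subset X\times Y$ off which $\frac{1}{N}\sum_{n=1}^N h(T^n x, S^n y)$ converges, and unique ergodicity of $T$ (combined with a Fubini-type argument on the fibre above $x_0$) lets one conclude convergence at $x_0$ for $\nu$-a.e.\ $y$. Then, to replace the continuous test functions by the discontinuous $F=\chi_A\otimes g$, one invokes a Banach principle: approximate $\chi_A$ between continuous functions differing on a set of arbitrarily small $\mu$-measure (using $\mu(\partial A)=0$) and $g\in L^1(\nu)$ in $L^1$-norm by continuous $g_k$, control the error by the maximal function of $g$ on $(Y,\nu)$, and conclude that $A_N(x_0,\cdot)$ is Cauchy $\nu$-a.e. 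Integrability of $\tilde g$ then follows from Fatou's lemma applied through the maximal inequality.

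The main obstacle, as I see it, is this second step: extracting pointwise convergence at the single orbit $\{T^n x_0\}$ from Birkhoff's a.e.\ statement over $X\times Y$. Unique ergodicity is what makes this feasible, but it only directly yields uniform convergence for continuous observables on $X$; bridging to $\chi_A$ and then to an arbitrary $L^1$ function $g$ requires the maximal inequality / Banach principle machinery rather than any purely soft continuity argument, and this is the technical heart of Brunel and Keane's result.
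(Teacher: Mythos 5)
First, a remark on context: the survey does not actually prove this statement; it records it as Theorem 1 of Brunel and Keane \cite{BK} and moves on, so your proposal has to be judged against the structure of their argument rather than against a proof in this paper. Your overall architecture --- pass to the product system $(X\times Y,\mu\times\nu,T\times S)$, apply Birkhoff to $\chi_A\otimes g$, use unique ergodicity of the strictly $L$-stable system to get the visit density $\#\{i:k_i\le N\}/N\to\mu(A)>0$, and sandwich $\chi_A$ between continuous functions using $\mu(\delta A)=0$ together with a maximal inequality to pass from continuous test functions to $g\in L^1(\nu)$ --- is faithful to the shape of their proof, and you correctly isolate the crux: converting an a.e.\ statement on $X\times Y$ into a statement about the single fibre $\{x_0\}\times Y$.

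However, the mechanism you offer for that crux does not work as stated, and this is a genuine gap. A ``Fubini-type argument on the fibre above $x_0$'' gives nothing: Fubini tells you that for $\mu$-a.e.\ $x$ the slice of the Birkhoff null set is $\nu$-null, and $\{x_0\}$ is a $\mu$-null set, so the specific point defining your sequence may well be exceptional. Unique ergodicity of $T$ only yields uniform convergence for continuous observables \emph{of $x$ alone}; the product $T\times S$ is not uniquely ergodic, so it does not rescue the joint average. What actually closes this gap in Brunel--Keane is the algebraic structure flagged in Example \ref{Lstable} of the survey: a strictly $L$-stable system is an ergodic rotation on a compact abelian group, so continuous functions on $X$ are uniformly approximable by finite linear combinations of characters $\chi$ with $\chi(T^nx_0)=\lambda^n\chi(x_0)$. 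For such a character the $x$-dependence factors out as the constant $\chi(x_0)$, and the problem reduces to the a.e.\ convergence of $\frac{1}{N}\sum_{n=1}^N\lambda^ng(S^ny)$ for a \emph{fixed} $\lambda$, which follows from Birkhoff applied to $S\times R_\lambda$ on $Y\times\mathbb{T}$ (the null set there lives in $Y$ after factoring out the $t$-variable, hence is independent of $x_0$). Only after this reduction do your continuous-approximation and maximal-inequality steps apply. Without invoking the group-rotation/eigenfunction structure --- i.e.\ using equicontinuity for something beyond unique ergodicity --- your outline cannot reach the point $x_0$, so the proposal as written is incomplete at precisely the step you yourself identify as the technical heart.
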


A uniform sequence is a sequence $k_1, k_2, \ldots$ of natural numbers such that there exists

\begin{enumerate}
\item a strictly $L$-stable system $(X,\mathcal{F},\mu,T)$
\item a set $A \in \mathcal{F}$ such that $\mu(A)>0=\mu(\delta A)$ where $\delta A$ is the boundary of $A$
\item a point $x \in X$ such that $k_i = k_i(x,A)$ for each $i \geq 1$ (where $k_i(x,A)$ is as defined above).
\end{enumerate}

\begin{defn}
A measure preserving system $(X, \mathcal{F}, \mu, T)$ is a \textbf{strictly} $\mathbf{L}$\textbf{-stable system} if $X$ is a compact metric space and $T$ is a homeomorphism of $X$ such that
\begin{enumerate}
\item $T^n$ is an equicontinuous set of mappings, i.e. for any $\epsilon >0$ there exists a $\delta >0$ such that for $x,y \in X$, $d(x,y) < \delta$ implies $d(T^nx,T^ny)<\epsilon$ for any integer $n$ and
\item $X$ possesses a dense orbit under $T$, i.e., there exists some $x$ in $X$ such that $\{T^nx : n \in \mathbb{Z}\}$ is dense in $X$.
\end{enumerate}
\end{defn}

\begin{example}\label{Lstable}
Consider $X = D$ the unit disk $\{z \in \mathbb{C}: |z| = 1\}$ under the standard Borel measure.  This is a compact measure space and let $T:D \rightarrow D$ be the irrational rotation defined by $Tz = e^{2\pi i \alpha}z$ for some irrational $\alpha$.  Then $T$ is a homeomorphism of $D$.  For any $\epsilon > 0$ and $z_1, A_2 \in D$, if $|z_1 - z_2| < \epsilon$ then
$$|T^nz_1 - T^nz_2| = |e^{2\pi in\alpha} z_1 - e^{2\pi i n\alpha}z_2| = |z_1 - z_2| < \epsilon.$$
Therefore the $T^n$ form an equicontinuous set of mappings.  As $\alpha$ is irrational, $D$ possesses a dense orbit under $T$ for any $z \in D$.  Therefore this gives us a strictly $L$-stable system on which we can build a uniform sequence.  In fact, according to Brunel and Keane \cite{BK} every strictly L-stable system is homeomorphic to some ergodic rotation on a compact abelian group.
\end{example}

Therefore if our measure preserving system is a strictly $L$-stable system, then the sequence of return times $k_i(x,A)$ which was discussed in \textbf{Definition \ref{return}} is a uniform sequence.  Rewriting \textbf{Theorem \ref{Brunel}} in terms of the characteristic function of our set $A$ gives:
\begin{theorem}\label{Brunel2}
For any ergodic dynamical system $(X, \mathcal{F}, \mu, T)$ with $\mu(X)< \infty$ and $A \subset X$ with $\mu(A)>0$.  Then there exists a set $X_f$ in $X$ of full measure such that for any other measure preserving system $(Y,\mathcal{G},\nu,S)$ and any $g \in L^1(\nu)$ then
$$\lim_n\frac{1}{n}\sum_{i=1}^n\chi_A(T^nx)g(S^ny)$$
exists for $\nu$-a.e. $y$.
\end{theorem}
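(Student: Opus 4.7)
The plan is to reduce the statement to Theorem~\ref{Brunel} by recognizing $\chi_A(T^{i}x)g(S^{i}y)$ as $g\circ S^{i}$ subsampled along the return times sequence $k_j(x,A)$ of Definition~\ref{return}. First I would rewrite
$$\frac{1}{n}\sum_{i=1}^{n}\chi_A(T^{i}x)g(S^{i}y)=\frac{N_n(x)}{n}\cdot\frac{1}{N_n(x)}\sum_{j=1}^{N_n(x)}g(S^{k_j(x,A)}y),$$
where $N_n(x):=\sum_{i=1}^{n}\chi_A(T^{i}x)$ counts the return times of $x$ falling in $\{1,\dots,n\}$; the identity is immediate from the definition of $k_j(x,A)$. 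Birkhoff's theorem (Theorem~\ref{Birk}) combined with the ergodicity of $T$ then gives $N_n(x)/n\to\mu(A)>0$ on a full-measure set $X_1\subset X$, which handles the normalizing factor.

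The heart of the argument is to identify $(k_j(x,A))_{j\ge 1}$ as a uniform sequence so that Theorem~\ref{Brunel} can be applied to the second factor. Working in (or passing to) a strictly $L$-stable realization of $(X,T)$ as in Example~\ref{Lstable}, and choosing $A$ with $\mu(\delta A)=0$, one obtains a $\mu$-full set $X_2$ of points $x$ for which the return times sequence is uniform in the precise sense required by Brunel and Keane. Applying Theorem~\ref{Brunel} to the arbitrary measure preserving system $(Y,\mathcal{G},\nu,S)$ and $g\in L^{1}(\nu)$ yields the existence of
$$\tilde g(y):=\lim_{m\to\infty}\frac{1}{m}\sum_{j=1}^{m}g(S^{k_j(x,A)}y)\qquad\nu\text{-a.e.}$$
On $X_f:=X_1\cap X_2$ we have $N_n(x)\to\infty$, and feeding the two limits into the earlier identity gives that the original average converges $\nu$-a.e.\ to $\mu(A)\,\tilde g(y)$, which is what we want.

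The hard part will be step two: arranging for the return times of an abstract ergodic $T$ to form a uniform sequence. Strict $L$-stability is genuinely restrictive -- not every ergodic system admits such a topological model -- so the proof outlined here really needs either an ambient equicontinuous structure on $(X,T)$ together with a set $A$ whose boundary is $\mu$-null, or else a deeper input. Removing this structural assumption is precisely what the Bourgain return times theorem accomplishes, and one anticipates that a later section of the survey will supply exactly this upgrade, making the reduction above work for any ergodic $T$ and any $A$ of positive measure.
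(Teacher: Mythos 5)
Your proposal matches the paper's own derivation: Theorem \ref{Brunel2} is presented there as nothing more than a rewriting of Theorem \ref{Brunel} via the return times sequence, and your normalization identity $\frac{1}{n}\sum_{i=1}^{n}\chi_A(T^{i}x)g(S^{i}y)=\frac{N_n(x)}{n}\cdot\frac{1}{N_n(x)}\sum_{j=1}^{N_n(x)}g(S^{k_j(x,A)}y)$ together with Birkhoff's theorem for the factor $N_n(x)/n\to\mu(A)$ is exactly the content of that rewriting. Your closing caveat is also on target --- the sentence preceding the theorem in the paper conditions the whole discussion on $(X,T)$ being strictly $L$-stable with $\mu(\delta A)=0$, so the statement's phrase ``any ergodic dynamical system'' overreaches what Brunel--Keane actually delivers, and removing that structural hypothesis is precisely what Bourgain's Return Times Theorem, treated later in the survey, accomplishes.
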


The interesting fact to note here and in the more general statement by Brunel and Keane is that the convergence is in a sense ``universal'' because it does not depend on the choice of the dynamical system $(Y,\mathcal{G},\nu,S)$ nor the function $g \in L^1(\nu)$.

Krengel's book \underline{Ergodic Theorems} \cite{KrengelET} highlights the generalization of the work of Brunel and Keane to the concept of universally good sequences.

\begin{defn}
A strictly increasing sequence $k_i \in \mathbb{N}$ (i.e. a subsequence of $\mathbb{N}$) is \textbf{universally good} with respect to a stated type of convergence (this could be norm convergence, pointwise convergence, etc.) if for any measure preserving system $(Y,\mathcal{G},\nu,S)$  and $g \in L^r(\nu)$, then
$$\frac{1}{n}\sum_{i=1}^{n}g(S^{k_i}y)$$
converges.
\end{defn}

For example, the result of Brunel and Keane can be restated to say that uniform sequences are universally good with respect to a.e. convergence for functions in $L^1(\nu)$.  The Blum and Hanson Theorem \cite{BH} shows that when $S$ is a mixing endomorphism and $(Y,\mathcal{G},\nu)$ is a probability space, then every strictly increasing sequence $k_i \in \mathbb{N}$ is universally good with respect to the $L^2(\nu)$-norm for functions $g$ in $L^2(\nu)$.

On the opposite side, the question of when sequences fail to be universal has also been an object of intense study.  Friedman and Ornstein \cite{FO} use a tower construction to create a strongly mixing operator $T$ for which the conditions in Blum and Hanson are not sufficient for guaranteeing universal a.e. convergence for functions in $L^1(\mu)$.  Krengel \cite{KrengelOTI} was able to construct ``universally bad sequences'' for which regardless of the measure preserving system chosen one can find a function $g$ in $L^1(\mu)$ for which the averages do not converge a.e..  Bellow \cite{BellowPOA} created a sequence which was universally good for a.e. convergence for functions in $L^p$ when $1 < p < \infty$, but at the same time was universally bad for a.e. convergence in $L^q$ when $1 \leq q < p$.  This result hints at the concerns about breaking the duality which we will discuss later in \textbf{Section \ref{Duality}}.

The above ``bad'' examples have all been sequences of zero density.  Along those lines, Friedman \cite{Fried} posed the question of whether or not it was true that sequences of positive density were universally good for a.e. convergence with respect to functions in $L^1(\mu)$.  Conze \cite{Conze} was able to prove that when $S$ is an automorphism with Lebesgue spectrum then sequences of positive lower density were universally good for a.e. convergence with respect to functions in $L^1(\mu)$, but he was also able to construct a counterexample to Friedman's question for $T$ without Lebesgue spectrum.

\subsection{Weighted Averages}

Returning to the concept of universally good sequences, we want to consider these sequences as ``weights'' on our average as follows.  When $n=k_i$ for some $k_i$ in our sequence we add up $g(S^ny)$ (weighting it by $1$) otherwise we weight it by $0$ and do not add the $g(S^ny)$ term.  This concept was extended to the convergence of more general weights in Bellow and Losert \cite{BL}.  They begin by defining a \textbf{good weight}.

\begin{defn}
A sequence of complex numbers $\mathbf{a} = (a_n)$ is a \textbf{good weight} in $L^p(\nu)$ for an operator $S$ on $L^p(\nu)$ if, for every $g \in L^p(\nu)$
$$\lim_N \frac{1}{N}\sum_{j=0}^{N-1}a_jg(S^jy)$$
exists $\nu$-a.e.
\end{defn}

Note that if our operator on $L^p(\nu)$ is simply the one induced by our measure preserving system $g \rightarrow g\circ S$, then this average is the same type we have been discussing before and with the weights  $a_n = 1$ if $n = k_i$ and $0$ otherwise.  Our ultimate goal is to find a weight for which convergence holds independently of the second dynamical system. Bellow and Losert define a \textbf{good universal weight} as a sequence which is a good weight for every operator induced by a measure preserving transformation on $(Y, \mathcal{G},\nu)$.  That this is equivalent to being a good weight for every Dunford -Schwartz operator or every operator induced by an ergodic transformation is shown in their Theorem 1.4 which is due to Baxter and Olsen \cite{BO}.  Thus the earlier result of Brunel and Keane says that for a uniform sequence $k_j$, then the sequence $a_i = 1$ if $i = k_j$ and $0$ otherwise is a good universal weight for a.e. convergence.

The next area of study then is to try and determine what types of sequences make universal good weights for a.e. convergence.  Section 3 of \cite{BL} shows that this result can be extended to show that a set of sequences $\mathcal{D}$ strictly containing the bounded Besicovitch sequences (which include the uniform sequences of Brunel and Keane) are good universal weights for a.e. convergence.  This is an extension of the Ryll-Nadzewski \cite{Ryll} work which shows that the bounded Besicovitch sequences are good universal weights for a.e. convergence.  Then Bellow and Losert prove that for a strictly $L$-stable system, $(X,\mathcal{F}, \mu, T)$ and any integrable $f$, the sequence $\mathbf{a} =\{f(T^nx)\}$ is bounded Besicovitch for all $x \in X$.

Bellow and Losert extend the results of Blum and Reich \cite{BR} and others to determine other characteristics of dynamical systems such as ergodicity, saturation or the spectral measure associated with the sequences which can lead to the sequence $\mathbf{a} = \{f(T^nx)\}$ being classified as a good universal weights for a.e. convergence.

In Theorem 5.4 of their paper, Bellow and Losert prove that if $T$ has Lebesgue spectrum, then for every $f \in L^{\infty}(\mu)$ there exists a set of full measure $X_f$ in $X$ such that for every $x \in X_f$, the sequence $\mathbf{a}=\{f(T^nx)\}$ is a good universal weight for a.e. convergence.  Bellow and Losert note that the result for K-automorphisms was previously shown using different methods by H. Furstenberg, M. Keane, J.P. Thouvenot and B. Weiss.

As an extension of the work of Bellow and Losert, one can consider a dynamical system $(X, \mathcal{F}, \mu, T)$ and function $f$ and ask what properties of the dynamical system are required for the sequence $\mathbf{a} = (f(T^nx))$ to be $\mu$-a.e. a good universal weight for a.e. convergence.  The answer to this question is what is standardly referred to as \textbf{Bourgain's Return Times Theorem}.

\begin{theorem}\label{RTT}
Let $(X,\mathcal{F},\mu,T)$ be an ergodic dynamical system of finite measure and $f \in L^{\infty}(\mu)$.  Then there exists a set $X_f \subset X$ of full measure such that for any other ergodic dynamical system $(Y,\mathcal{G}, \nu, S)$ with $\nu(Y)<\infty$ and any $g \in L^{\infty}(\nu)$:
$$\frac{1}{N}\sum_{n=1}^Nf(T^nx)g(S^ny)$$
converges $\nu$-a.e. for all $x \in X_f$.
\end{theorem}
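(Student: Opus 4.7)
The plan is to follow the ergodic-theoretic strategy of Bourgain--Furstenberg--Katznelson--Ornstein: split $f$ according to the Kronecker factor of $(X,\mathcal{F},\mu,T)$ and handle each piece by a different mechanism. By a standard density argument (using $L^\infty \subset L^2$ on a finite measure space) combined with a maximal inequality for the bilinear averages that is uniform in $(Y,\mathcal{G},\nu,S)$ and $g$, it suffices to verify a.e. convergence for $f$ in an $L^2$-dense countable set and for $g$ in an $L^2$-dense countable set, so that a single full-measure set $X_f$ can be extracted by a diagonal argument.

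Let $\mathcal{K} \subset \mathcal{F}$ denote the Kronecker factor of $T$ (the largest $T$-invariant sub-$\sigma$-algebra on which $T$ has pure point spectrum), and write $f = f_1 + f_2$ with $f_1 = \E(f \mid \mathcal{K})$ and $f_2 = f - f_1$. The system $(X,\mathcal{K},\mu,T)$ is a factor of an ergodic rotation on a compact abelian group, which as noted in \textbf{Example \ref{Lstable}} is strictly $L$-stable. Hence by the Bellow--Losert result cited above, for $\mu$-a.e. $x$ the weight sequence $a_n = f_1(T^n x)$ is bounded Besicovitch and in particular a good universal weight for a.e. convergence; this disposes of the $f_1$ contribution for every $(Y,\mathcal{G},\nu,S)$ and every $g \in L^\infty(\nu)$ simultaneously.

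The heart of the proof is then to show that when $f_2 \perp \mathcal{K}$, the averages $\frac{1}{N}\sum_{n=1}^N f_2(T^n x) g(S^n y)$ tend to $0$ for $\mu$-a.e. $x$ and $\nu$-a.e. $y$. The key lemma is a uniform Wiener--Wintner theorem: for such $f_2$, there exists a set $X_{f_2}$ of full $\mu$-measure on which
$$\sup_{t \in [0,1)} \left| \frac{1}{N}\sum_{n=1}^N f_2(T^n x) e^{2\pi i n t} \right| \longrightarrow 0.$$
Given this, for each fixed $x \in X_{f_2}$ I would expand $\int_Y \bigl|\frac{1}{N}\sum_{n=1}^N f_2(T^n x) g(S^n y)\bigr|^2 d\nu(y)$ and apply the spectral theorem to $S$: this integral equals $\int \bigl|\frac{1}{N}\sum_{n=1}^N f_2(T^n x) e^{2\pi i n t}\bigr|^2 d\sigma_g(t)$, where $\sigma_g$ is the spectral measure of $g$. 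By uniform Wiener--Wintner the integrand is uniformly small, so the $L^2(\nu)$-norm tends to $0$; along a lacunary subsequence $N_k$ this gives a.e. convergence to $0$ by Borel--Cantelli, and the bilinear maximal inequality fills in the values of $N$ between consecutive $N_k$.

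The principal obstacle is the simultaneous derivation of the uniform Wiener--Wintner theorem for $f_2 \perp \mathcal{K}$ and of the bilinear maximal inequality that upgrades convergence along $N_k$ to full a.e. convergence. The former is essentially the content of Bourgain's oscillation/entropy estimate for the family $\bigl\{ t \mapsto \frac{1}{N}\sum_{n=1}^N f_2(T^n x) e^{2\pi i n t}\bigr\}_{N}$, and the latter requires a transference/Calder\'on-type estimate that is uniform in the second system. These two inputs, together with the Kronecker decomposition, are precisely what makes \textbf{Theorem \ref{RTT}} genuinely harder than the uniform-sequence result of \textbf{Theorem \ref{Brunel2}}.
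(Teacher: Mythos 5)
Your Kronecker decomposition and your treatment of the $f_1=\E(f\mid\mathcal{K})$ piece match the BFKO strategy, and the uniform Wiener--Wintner theorem is a genuine ingredient of the subject. But there is a real gap in your treatment of $f_2\in\mathcal{K}^{\perp}$, and it is exactly the gap that forces the BFKO proof to take the shape it does. The spectral-theorem computation
$$\int_Y\Bigl|\frac{1}{N}\sum_{n=1}^N f_2(T^nx)g(S^ny)\Bigr|^2 d\nu(y)=\int\Bigl|\frac{1}{N}\sum_{n=1}^N f_2(T^nx)e^{2\pi int}\Bigr|^2 d\sigma_g(t)\le \|g\|_2^2\,\sup_t\Bigl|\frac{1}{N}\sum_{n=1}^N f_2(T^nx)e^{2\pi int}\Bigr|^2,$$
combined with the uniform Wiener--Wintner theorem, yields convergence to $0$ in $L^2(\nu)$-norm only; this is precisely Assani's norm-convergence theorem quoted at the start of \textbf{Section \ref{Duality}}, not the pointwise statement. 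To run Borel--Cantelli you need $\sum_k\sup_t|\frac{1}{N_k}\sum_{n\le N_k}f_2(T^nx)e^{2\pi int}|^2<\infty$, and the uniform Wiener--Wintner theorem provides no rate of decay, so this fails for a prescribed lacunary sequence; if you instead choose $N_k$ (depending on $x$) to force summability, the ratios $N_{k+1}/N_k$ can be arbitrarily large and the boundedness of $f$ and $g$ no longer lets you control the intermediate values of $N$. A rate is exactly what the ``Wiener--Wintner function of power type $\alpha$'' hypothesis supplies in the paper's later subsection on Wiener--Wintner dynamical functions, where your argument is carried out essentially verbatim --- and the paper notes, citing \cite{AssaniSCO}, that not every ergodic system has a dense set of such functions in $\mathcal{K}^{\perp}$ even with a logarithmic rate. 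Your two fallbacks do not close the gap: a bilinear maximal inequality uniform in the second system is not an off-the-shelf tool but is essentially the content of Bourgain's original fifty-two page harmonic-analysis proof, so invoking it assumes the hardest part of the theorem; and even granted such an inequality, the Banach-principle reduction needs a dense class of $g$ for which a.e.\ convergence is already known, which for a fixed $f_2\in\mathcal{K}^{\perp}$ is not supplied by anything in your outline.

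What the BFKO proof does instead for $f_2\in\mathcal{K}^{\perp}$ is qualitatively different. It defines $X_f$ by the self-sampling condition $\lim_N\frac{1}{N}\sum_{n=1}^N f(T^nx)\overline{f(T^ny)}=0$ for $\mu$-a.e.\ $y$ (valid because the spectral measure of $f_2$ is continuous) together with genericity for finite-range product functions; it then assumes the averages fail to converge to $0$ for some $(Y,\mathcal{G},\nu,S)$ and $g$ on a set of positive measure, and uses the Rohlin Tower Lemma and a choice of well-separated ranges $R_j=(L_j,M_j)$ to manufacture $J$ uniformly bounded, nearly pairwise orthogonal sequences $c_n^j(y)$, each of which correlates with $g(S^ny)$ by at least $a/2-\delta$. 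For $J$ large this violates a Bessel-type inequality, giving the contradiction. That orthogonality argument --- not the spectral theorem --- is what converts continuity of the spectral measure of $f_2$ into pointwise, rather than norm, convergence in $y$. If you want to salvage your outline you must replace the spectral-theorem step by this self-sampling/Rohlin-tower mechanism, or restrict to systems with a dense set of Wiener--Wintner functions of positive power type.
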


Let $\left(X,\mathcal{F},\mu,T\right)$ and $\left(Y,\mathcal{G},\nu,S\right)$ be ergodic dynamical systems with $\mu(X)<\infty$, $\nu(Y)<\infty$, $f\in L^\infty(\mu)$ and $g\in L^\infty(\nu)$.  Applying Birkhoff's theorem to the multiple term Ces\`{a}ro average
$$\frac{1}{N}\sum_{n=1}^Nf\left(T^nx\right)g\left(S^ny\right)$$
it is known that this average converges $\mu \times \nu$-a.e..  However, our focus is on showing that this convergence is ``universal'' and thus independent of the choice of dynamical system $(Y,\mathcal{G}, \nu, S)$ and function $g$ which is a much stronger result.

\subsection{Wiener-Wintner Results}

While the above developments were taking place in the realm of measure theory, some related results were being discovered in the field of harmonic analysis.  In 1941, the publication of the \textbf{Wiener-Wintner Theorem} \cite{WW} gave a positive result for the Return Times Theorem when the second dynamical system is a rotation by $\alpha$.

\begin{theorem}\label{WW}
If  $(X,\mathcal{F},\mu,T)$ is a measure preserving system and if  $f \in L^1(\mu)$, then there exists a set $\tilde{X}$ of full measure in $X$ such that for $x\in \tilde{X}$ and for all $\theta \in \mathbb{R}$, the sequence
$$\frac{1}{N}\sum_{n=1}^{N}e^{2\pi in\theta}f\left(T^nx\right)$$
converges.
\end{theorem}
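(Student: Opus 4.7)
The plan is to exploit the spectral decomposition of $L^2(\mu)$ with respect to the Koopman operator induced by $T$. First, by the ergodic decomposition, I would reduce to the ergodic case and then split $L^2(\mu) = K \oplus K^{\perp}$, where $K$ is the Kronecker factor (the closed linear span of the eigenfunctions of $T$) and $K^{\perp}$ is the weakly mixing part (functions whose spectral measure is continuous). I will handle each summand separately, aiming at a single set of full measure $\tilde X$ that works for all $\theta \in \mathbb{R}$.

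For $f$ an eigenfunction with $f \circ T = \lambda f$ (and $|\lambda|=1$), a direct computation gives
$$\frac{1}{N}\sum_{n=1}^{N} e^{2\pi in\theta} f(T^n x) = f(x)\cdot \frac{1}{N}\sum_{n=1}^{N} (\lambda e^{2\pi i \theta})^n,$$
which converges pointwise for every $\theta$ and every $x$ where $f$ is defined, with no $\theta$-dependent exceptional set. Since $L^2(\mu)$ is separable, $K$ admits a countable orthonormal basis of eigenfunctions, so a countable intersection of full-measure sets handles all finite linear combinations. To pass to a general $f \in K$ (and eventually to $L^1$), I would invoke the pointwise ergodic maximal inequality to control
$$\sup_{N,\theta}\left|\frac{1}{N}\sum_{n=1}^{N} e^{2\pi in\theta} f(T^n x)\right| \leq \sup_N \frac{1}{N}\sum_{n=1}^{N} |f|(T^n x),$$
which is of weak type $(1,1)$, and then apply the Banach principle to upgrade convergence on a dense class to $\mu$-a.e. convergence for all $f$ in the relevant space.

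For $f \in K^{\perp}$, the goal is to show that the averages tend to $0$ for $\mu$-a.e. $x$ and every $\theta$ on a single full-measure set. The key tool is the van der Corput inequality applied to $u_n = e^{2\pi in\theta} f(T^n x)$, whose cross-correlation simplifies to
$$\frac{1}{N}\sum_{n=1}^{N} u_{n+h}\overline{u_n} = e^{2\pi ih\theta}\cdot\frac{1}{N}\sum_{n=1}^{N} f(T^{n+h}x)\overline{f(T^n x)}.$$
Applying Birkhoff's theorem (\textbf{Theorem \ref{Birk}}) to $(f\circ T^h)\bar{f}$ for each $h \in \mathbb{N}$ on a common full-measure set, the right-hand Ces\`{a}ro average converges to $\int f\circ T^h \cdot \bar{f}\,d\mu$, and taking absolute values eliminates the $e^{2\pi ih\theta}$ factor. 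By Wiener's characterization of continuity of the spectral measure, $\frac{1}{H}\sum_{h=1}^H \bigl|\int f\circ T^h \cdot \bar{f}\,d\mu\bigr|$ tends to $0$, so van der Corput yields $\lim_N \frac{1}{N}\sum_{n=1}^N e^{2\pi in\theta}f(T^n x) = 0$ for every $\theta$.

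The main obstacle throughout is to keep the exceptional null set independent of $\theta$. The van der Corput step is precisely what makes this possible for the weakly mixing part: it replaces control of the uncountable family $\{\theta \in \mathbb{R}\}$ by control of countably many autocorrelation averages, each handled by a single application of Birkhoff, on a single full-measure set. Once the $L^2$ statement is obtained, the extension to $L^1$ follows by the uniform-in-$\theta$ maximal inequality above together with the density of $L^\infty$ in $L^1$.
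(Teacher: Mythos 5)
Your proposal is correct, and it is essentially the van der Corput argument that the paper attributes to Bourgain \cite{BourgainDRA}; the paper itself states Theorem \ref{WW} without proof, only citing Furstenberg's joinings/generic-points proof and Bourgain's proof. Your handling of the $\theta$-uniformity — a single full-measure set from countably many Birkhoff applications to the autocorrelations $(f\circ T^h)\bar f$, plus the $\theta$-independent maximal inequality to pass from the dense class to all of $L^1$ — is exactly the right mechanism and is sound.
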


The proof by Wiener and Wintner relied on connecting the spectrum in the sense of Wiener with the point spectrum.  Unfortunately, there was an error in the proof of Wiener and Wintner.  However the theorem is true and has been proven multiple times using a wide variety of techniques.  The first correct version of the proof was given by H. Furstenberg \cite{FurstSPA} using the notions of joinings and generic points.  We will see later the role of joinings and generic points in the proof of the Return Times Theorem.  Another proof of the Wiener-Wintner Theorem which uses the Van der Corput's inequality \cite{KN} was given by Bourgain \cite{BourgainDRA} in 1990.  Actually Bourgain proved a stronger result referred to as \textbf{Bourgain's Uniform Wiener-Wintner Ergodic Theorem} which constitutes the proof that (1) implies (2) of the theorem below.

\begin{theorem}\label{UWW}
Given an ergodic dynamical system $(X, \mathcal{F}, \mu, T)$ and $f \in \mathcal{K}^{\perp}$ where $\mathcal{K}$ is the Kronecker factor (the $\sigma$-invariant algebra spanned by the eigenfunctions of $T$).  The following statements are equivalent
\begin{enumerate}
\item $f\in\mathcal{K}^{\perp}$.
\item For $\mu$-a.e. $x$,
$$\lim_N \sup_{\theta} \left| \frac{1}{N} \sum_{n=1}^N f(T^nx)e^{2\pi i n \theta} \right| = 0.$$
\end{enumerate}
\end{theorem}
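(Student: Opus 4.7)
The plan is to prove the two implications separately, with $(1)\Rightarrow(2)$ being the substantive content (Bourgain's uniform Wiener-Wintner theorem). As hinted in the text, I would use van der Corput's inequality to strip the supremum over $\theta$, and then apply spectral theory of the Koopman operator to exploit the hypothesis $f\in\mathcal{K}^\perp$.

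For $(1)\Rightarrow(2)$: Set $u_n = f(T^nx)e^{2\pi in\theta}$ and apply the scalar van der Corput inequality. The decisive observation is that $u_{n+h}\overline{u_n} = e^{2\pi ih\theta}f(T^{n+h}x)\overline{f(T^nx)}$, so after taking absolute values the $\theta$ dependence vanishes and one obtains
$$\sup_\theta\left|\frac{1}{N}\sum_{n=1}^N f(T^nx)e^{2\pi in\theta}\right|^2 \le \frac{H+1}{N}\|f\|_\infty^2 + \frac{2}{H+1}\sum_{h=1}^H\left|\frac{1}{N}\sum_{n=1}^{N-h}f(T^{n+h}x)\overline{f(T^nx)}\right|.$$
Applying Birkhoff's theorem to each function $(f\circ T^h)\cdot\overline{f}\in L^1(\mu)$, on a single full-measure set obtained by intersecting countably many exceptional sets indexed by $h\in\mathbb{N}$, the inner averages converge a.e.\ to $\langle f\circ T^h,f\rangle$. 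Sending first $N\to\infty$ and then $H\to\infty$ reduces the problem to showing $\frac{1}{H}\sum_{h=1}^H|\langle f\circ T^h,f\rangle|\to 0$. This is where the hypothesis bites: since $f\in\mathcal{K}^\perp$, the spectral measure $\sigma_f$ of $f$ with respect to the Koopman operator has no atoms, and $\langle f\circ T^h,f\rangle=\hat{\sigma}_f(h)$. By Wiener's lemma the Ces\`aro averages of $|\hat{\sigma}_f(h)|^2$ vanish, and a Cauchy-Schwarz step yields the corresponding $\ell^1$ statement.

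For $(2)\Rightarrow(1)$: Argue by contraposition. If $f\notin\mathcal{K}^\perp$ then $P_\mathcal{K}f\ne 0$, and one can approximate $P_\mathcal{K}f$ in $L^2$ by a finite sum $\sum_j c_j\phi_j$ of eigenfunctions with distinct eigenvalues $e^{2\pi i\theta_j}$, where some $c_k\ne 0$. Choosing $\theta=-\theta_k$, orthogonality of characters yields
$$\frac{1}{N}\sum_{n=1}^N\phi_j(T^nx)e^{-2\pi in\theta_k}=\phi_j(x)\cdot\frac{1}{N}\sum_{n=1}^N e^{2\pi in(\theta_j-\theta_k)}\longrightarrow \delta_{jk}\phi_k(x).$$
Since ergodic eigenfunctions have constant nonzero modulus, the supremum for $P_\mathcal{K}f$ is a.e.\ bounded below by $|c_k\phi_k|>0$; the $\mathcal{K}^\perp$-component contributes negligibly by the forward direction, contradicting (2).

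The main obstacle is precisely the interchange of $\sup_\theta$ with $\limsup_N$: the classical Wiener-Wintner theorem only delivers pointwise-in-$\theta$ convergence, and it is the van der Corput step --- producing a correlation-type bound that is entirely independent of $\theta$ --- which upgrades this to uniform convergence. Wiener's lemma is then standard input, so the crux really is noticing that the $\theta$ dependence drops out after squaring and majorizing.
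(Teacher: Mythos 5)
Your proposal is correct in substance and in fact covers more ground than the paper does: the paper proves only the implication $(2)\Rightarrow(1)$ and defers $(1)\Rightarrow(2)$ to Bourgain, whereas you supply the standard van der Corput argument for that direction --- the key point that $u_{n+h}\overline{u_n}=e^{2\pi ih\theta}f(T^{n+h}x)\overline{f(T^nx)}$ loses its $\theta$-dependence in absolute value, so the correlation bound dominates the supremum, after which Birkhoff on a countable intersection of full-measure sets indexed by $h$, continuity of $\sigma_f$ for $f\in\mathcal{K}^{\perp}$, Wiener's lemma and Cauchy--Schwarz finish the job. This is precisely Bourgain's route, so you have reconstructed the ``hard'' direction that the survey omits. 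For $(2)\Rightarrow(1)$ your argument and the paper's are the same in spirit (isolate an eigenfunction $\phi_k$ with $c_k=\langle f,\phi_k\rangle\neq 0$ and test at $\theta=-\theta_k$), but the paper works in $L^2$ via the spectral theorem, obtaining the lower bound $|c_k|^2$ for $\int\bigl|\frac{1}{N}\sum_{n=1}^N f_1(T^nx)e^{-2\pi in\theta_k}\bigr|^2d\mu$ in the limit, while you argue pointwise. The one loose step is your passage from the finite eigenfunction sum back to $P_{\mathcal{K}}f$: the tail of the expansion is small in $L^2$, but that does not directly yield the asserted a.e.\ pointwise lower bound $|c_k\phi_k|$ on the supremum. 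The clean repair is to stay in $L^2$: since $\langle f\circ T^n,\phi_k\rangle=e^{2\pi in\theta_k}c_k$, one has $\bigl\langle \frac{1}{N}\sum_{n=1}^N f\circ T^n e^{-2\pi in\theta_k},\phi_k\bigr\rangle=c_k$ for every $N$, so the $L^2$ norm of the average is at least $|c_k|$, while statement (2) together with dominated convergence would force it to $0$. With that adjustment both directions are complete.
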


\begin{proof}
Proof of (2) implies (1).  We can decompose $f$ into the sum $f_1+ f_2$ where $f_1\in \mathcal{K}$ and $f_2\in \mathcal{K}^ {\perp}.$  Using an orthonormal basis of eigenfunctions $e_j$ each with eigenvalue $e^{2\pi i\beta_j}$ we can write $f_1$ as $\sum_{j=1}^{\infty} \alpha_j(f) e_j$ where the convergence of the series is to be understood as being in $L^2$ norm.

Assume that $f_1\neq 0$, thus $\alpha_{j_0}(f) \neq 0$ for at least one $j_0$.  Then we would have by the Spectral Theorem
$$\int\bigg|\frac{1}{N}\sum_{n=1}^N f_1(T^nx)e^{-2\pi in\theta_{j_0}}\bigg|^2d\mu \leq \sup_{\theta}\bigg|\frac{1}{N}\sum_{n=1}^N f_1(T^nx)e^{2\pi in\theta}\bigg|^2.$$
The left hand side can be written as
$$\int\bigg|\sum_j^{\infty} \alpha_j(f)e_j\bigg(\frac{1}{N}\sum_{n=1}^N e^{2\pi in\theta_j}e^{-2\pi in\theta_{j_0}}\bigg)\bigg|^2d\mu = \sum_j^{\infty} |\alpha_j(f)|^2\bigg|\frac{1}{N}\sum_{n=1}^N
e^{2\pi in(\theta_j - \theta_{j_0})}\bigg|^2.$$
By taking the limit with $N$ we obtain a lower bound $|\alpha_{j_0}(f)|^2$ which is strictly positive if $f_1\neq 0$.
\end{proof}

The Wiener-Wintner Theorem while discovered independently from these return times averages follows as a consequence of the Return Times Theorem.  Thus any developments with regards to the Return Times Theorem have consequences for the Wiener-Winter result and thus may have other implications in the field of harmonic analysis.  Similarly, developments with the Wiener-Wintner Theorem, may give ideas for expanding the Return Times Theorem.  Our goal in this paper is to discuss the developments thus far and highlight some of the implications this may leave for future studies.

\section{Development}

In 1988, J. Bourgain \cite{RTT1} released a preprint of his proof of the Return Times Theorem.  His statement of the Return Times Theorem is as follows:

\begin{theorem}\label{RTTBourgain}
Let $(X,\mathcal{F},\mu,T)$ be a dynamical system with $\mu$ a finite positive measure and $T$ and ergodic measure-preserving transformation.  The generic return-time sequence is defined as $\Lambda_\omega = \{n \in \mathbb{Z}_+ : T^n\omega \in A\}$ for any set $A$ of positive measure and any point $\omega$ in $X$.  For almost all $\omega$ the sequence $\Lambda_\omega$ satisfies the pointwise ergodic theorem.    Given any dynamical system $(Y,\mathcal{G},\nu, S)$ where $\nu$ is a finite measure, the averages
$$\frac{1}{|\Lambda_N|}\sum_{n\in \Lambda_N} S^ng$$
converge $\nu$-almost surely for any $g$ in $L^1(\nu)$ where $\Lambda_N = \Lambda_{\omega} \cap [1,N]$.
\end{theorem}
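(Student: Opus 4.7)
The plan is to reduce, via Birkhoff's theorem applied to $\chi_A$ (which gives $|\Lambda_N|/N \to \mu(A) > 0$ $\mu$-a.e.\ by Theorem \ref{Birk}), to showing that
$$\frac{1}{N}\sum_{n=1}^N f(T^n\omega)\, g(S^n y)$$
converges $\nu$-a.e., for $\mu$-a.e.\ $\omega$, \emph{simultaneously} for all ergodic second systems $(Y,\mathcal{G},\nu,S)$ and all $g \in L^1(\nu)$, where $f = \chi_A$ or more generally $f \in L^\infty(\mu)$. Reduction to ergodic $S$ is handled by ergodic decomposition, and the passage from $g \in L^\infty(\nu)$ to $g \in L^1(\nu)$ by a Banach-principle maximal inequality. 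The engine is the orthogonal splitting $f = f_1 + f_2$ with $f_1 = \E(f \mid \mathcal{K})$ and $f_2 \in \mathcal{K}^\perp$, where $\mathcal{K}$ is the Kronecker factor of $T$.

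For the Kronecker piece $f_1$: because the Kronecker factor is a rotation on a compact abelian group (cf.\ Example \ref{Lstable}), the Bellow-Losert analysis gives that $\{f_1(T^n\omega)\}_n$ is bounded Besicovitch for $\mu$-a.e.\ $\omega$, and bounded Besicovitch sequences are good universal weights (Ryll-Nardzewski). Thus the $f_1$-term contributes a $\nu$-a.e.\ limit for every $(Y,\mathcal{G},\nu,S)$ and every $g \in L^1(\nu)$, on a single full-measure set of $\omega$'s. For $f_2$ the goal is to show the average tends to $0$, and I further split $g = g_1 + g_2$ with $g_1 \in \mathcal{K}(S)$, $g_2 \in \mathcal{K}(S)^\perp$. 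For $g_1$, trigonometric approximation by $S$-eigenfunctions $e_j$ with eigenvalues $e^{2\pi i \beta_j}$ reduces each summand to $c_j e_j(y) \cdot \frac{1}{N}\sum_n f_2(T^n\omega) e^{2\pi i n \beta_j}$, which Bourgain's Uniform Wiener-Wintner theorem (Theorem \ref{UWW}) forces to $0$ \emph{uniformly} in $\beta_j$ on a full-measure set $X_{f_2} \subset X$. For $g_2$, Van der Corput's inequality applied to $u_n = f_2(T^n\omega) g_2(S^n y)$ bounds $\bigl|\tfrac{1}{N}\sum u_n\bigr|^2$ by an average over $h$ of correlations $\tfrac{1}{N}\sum_n f_2(T^n\omega)\overline{f_2(T^{n+h}\omega)}\, g_2(S^n y)\overline{g_2(S^{n+h}y)}$; Birkhoff in the product system $T \times S$ drives these to products of autocorrelations, and the continuity of the spectral measure of $g_2$ under $S$ makes the Ces\`{a}ro average in $h$ of $|\langle g_2, S^h g_2\rangle|$ vanish by Wiener's lemma, collapsing the estimate to $0$.

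The main obstacle — and the genuine depth of Bourgain's theorem — is the \emph{universality} of the exceptional null set: the full-measure $X_f \subset X$ on which convergence holds must be chosen once and for all, independent of $(Y,\mathcal{G},\nu,S,g)$. A direct union of null sets over an uncountable parameter space is not available, so one must combine a Calder\'on transfer argument with a uniform maximal inequality for the weighted averages — the \emph{return times maximal inequality}, which is the technical core of Bourgain's paper. This reduces verification of a.e.\ convergence to a single universal countable test family (for instance, the Bernoulli shift on $\{0,1\}^{\mathbb{Z}}$ with a countable dense subclass of cylinder functions), and then the maximal bound propagates the conclusion to arbitrary $(Y,\mathcal{G},\nu,S,g)$ by density.
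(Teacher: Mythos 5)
Your preliminary reductions (normalizing by $|\Lambda_N|/N \to \mu(A)$ via Birkhoff applied to $\chi_A$, ergodic decomposition in $S$, a Banach-principle passage from $L^\infty(\nu)$ to $L^1(\nu)$ for fixed $\omega$) are fine, as is the Kronecker part of $f$ and the $\mathcal{K}(S)$-part of $g$: the uniform Wiener--Wintner theorem really does give one full-measure set of $\omega$'s that handles all eigenfunctions of all second systems simultaneously. The gap is in the case $f_2\in\mathcal{K}^\perp$, $g_2\in\mathcal{K}(S)^\perp$. After Van der Corput you evaluate the correlation averages by invoking Birkhoff on the product system $T\times S$, and the resulting null set of $\omega$'s depends on $(Y,\mathcal{G},\nu,S,g_2)$ (and tacitly on ergodicity of $T\times S$, which may fail). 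That is exactly the dependence the theorem forbids, and it is not a removable technicality: as the paper itself observes, $\mu\times\nu$-a.e.\ convergence is already immediate from Birkhoff on the product system, so at this stage you have only reproved the weak statement. Your closing paragraph correctly identifies universality of the exceptional set as the crux, but ``combine Calder\'on transfer with the return times maximal inequality'' names the theorem rather than proving it: no such inequality is stated or established, and even granting one, the Banach principle still requires a dense class of $(Y,S,g)$'s on which convergence is verified for the fixed $\omega$ --- which is the original problem again.

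For contrast, the proof the paper surveys (the BFKO argument) never consults the second system when defining the exceptional set: $X_f$ is cut out by conditions on $\omega$ alone, namely genericity for $f$ together with the self-sampling/product-null condition $\lim_N \frac{1}{N}\sum_{n=1}^N f(T^n\omega)\overline{f(T^ny)}=0$ for $\mu$-a.e.\ $y$, which holds on a set of full measure precisely because $f\in\mathcal{K}^\perp$ has continuous spectral measure. Universality is then forced by contradiction: if convergence failed for some $(Y,\mathcal{G},\nu,S,g)$ and some $\omega\in X_f$, a Rohlin tower in $Y$ together with well-separated ranges $R_j=(L_j,M_j)$ yields $J$ uniformly bounded sequences $(c_n^j(y))$ that are pairwise nearly orthogonal (this is where self-sampling enters) yet each correlate with $(g(S^ny))$ by at least $a/2-\delta$, contradicting Bessel's inequality for large $J$. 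Some version of that orthogonality/energy argument --- or of Bourgain's original harmonic-analytic maximal inequality, or Rudolph's joinings argument --- is the missing core that your outline still needs to supply.
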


This important result impacted both the fields of ergodic theory and harmonic analysis and fully generalized the work with good universal weights which was presented in the work of Bellow and Losert \cite{BL}.  This initial fifty-two page proof used difficult techniques from harmonic analysis which made it challenging to analyze the proof, nevertheless to extend those techniques to proving other convergence questions.  In 1989, J. Bourgain, H. Furstenberg, Y. Katznelson and D. Ornstein \cite{RTT2} published a more concise proof of the Return Times Theorem (found in the appendix to Bourgain's ``Pointwise ergodic theorems for arithmetic sets'') which utilized several key ergodic theory concepts such as the Rohlin Towers Lemma \cite{DGS}.  One assumption that they make in their argument is that the function $f$ has finite range.  A discussion of how to extend to a function $f$ which does not have finite range along with a detailed analysis of their argument can be found in either \cite{AssaniWWE} or \cite{Collins}.

\subsection{The BFKO proof of Bourgain's Return Times Theorem}

One of the key techniques used in the proof is to study the set of points on which the theorem holds true.  To do this we begin by looking at the set of points for which the Birkhoff averages converge.  This concept was introduced by Furstenberg \cite{FurstRIE} in the setting of regular measure-preserving systems.

\begin{defn}
A measure-preserving system $(X, \mathcal{F}, \mu, T)$ is call \textbf{regular} if the space $X$ is compact metrizable, the $\sigma$-algebra $\mathcal{F}$ is Borel and the transformation $T$ is continuous.
\end{defn}

\begin{defn}\label{FGen}
Let $(X, \mathcal{F}, \mu, T)$ be a regular measure-preserving system, $x_0 \in X$ and $\mu \in \mathcal{M}_T(X)$ (the $T$-invariant measures on $X$).  A point $x_0 \in X$ is \textbf{a generic point for $\mu$} if
$$\frac{1}{N} \sum_{n=1}^N f(T^nx_0) \rightarrow \int f d\mu$$
for every continuous function $f \in \mathcal{C}(X)$.
\end{defn}

It is known that every separable measure preserving system is equivalent to a regular one (see \cite{FurstRIE}), thus for our purposes we use an extended definition of generic to the context of a general measure preserving system.  In this case the genericity is dependent upon the choice of the function $f$.

\begin{defn}\label{BirkGen}
Let $(X, \mathcal{F}, \mu, T)$ be an ergodic measure preserving system.  Let $f$ be an integrable function defined on $X$.  A point $x_0 \in X$ is \textbf{generic for $f$} if
$$\frac{1}{N} \sum_{n=1}^N f(T^nx_0) \rightarrow \int f d\mu$$
\end{defn}

Thus Birkhoff's Pointwise Ergodic Theorem implies that for any $f \in L^1(\mu)$, $\mu$-a.e. $x \in X$ is \textbf{generic for $f$}.

The proof in \cite{RTT2} begins by decomposing our function $f$ with respect to the Kronecker factor $\mathcal{K}$ in order to handle the two factors separately in the proof.  When $f \in \mathcal{K}$ the statement follows fairly easily from \textbf{Theorem \ref{WW}}.  When $f \in \mathcal{K}^{\perp}$ (and has finite range) the authors establish results which describe the set of full measure $X_f$ on which the averages converge.  This set is namely the intersection of the set of full measure on which \textbf{Theorem \ref{Birk}} holds for $f$ (because of the possibility that one chooses $g \equiv 1$) and the sets
$$X_1=\left\{\lim_N \frac{1}{N} \sum_{n=1}^N f(T^nx)\overline{f(T^ny)} = 0 \textrm{ for } \mu\textrm{-a.e. }y\right\}$$
and
$$X_2=\bigcup_{n=1}^{\infty}\left\{x \in X: x \textrm{ is generic for } \chi_A \circ \Gamma_1 \textrm{ for any } A \subset F^n\right\}$$
where $F$ is the range of $f$ and $\Gamma_1:X \rightarrow F^n$ is defined by
$$\Gamma_1(x) = (f(T^2x),...,f(T^{n+1}x)).$$
As $f \in \mathcal{K}^{\perp}$ it has continuous spectral measure and thus the set $X_1$ has full measure.  The other sets are of full measure by \textbf{Theorem \ref{Birk}}.

The authors then make the assumption that \textbf{Theorem \ref{RTT}} does not hold on this set $X_f$.  Thus there exists some other ergodic dynamical system $(Y,\mathcal{G}, \nu, S)$ and $g \in L^{\infty}(\nu)$ such that the set
$$B =\left\{y \in Y:\limsup_{N}\left|\frac{1}{N}\sum_{n=1}^N f(T^nx)g(S^ny)\right|>0\right\}$$
has positive measure.  Without loss of generality this can be reduced to the situation where for some positive $a$ a set

\begin{equation}\label{Rohlin}
B_1=\left\{y \in Y: \lim_{k}\sup_{N \geq k} Re \left(\frac{1}{N}\sum_{n=1}^Nf(T^nx)g(S^ny)\right)>\frac{a}{2}\right\}
\end{equation}

which has positive measure.  Using the Rohlin Tower Lemma, given any $\delta > 0$ one can find an integer $K$ and a set $B_2 \subset B_1$ of positive measure such that the $S(B_2), S^2(B_2), \ldots, S^K(B_2)$ are pairwise disjoint and cover $B_2$ up to a set of measure less than $\frac{\delta}{3}$.  Using the set $B_1$ and the properties in the sets $X_1$ and $X_2$ one can create a sequence of properly spaced ranges $R_j=(L_j, M_j)$ on which the points behave poorly with respect to the return times average, but well with respect to the averages used in defining $X_1$ and $X_2$.  Using these ranges and the set $\tilde{B}$ one can create sequences $(c_n(y))_{n=1}^{N_o}$ (for some large enough $N_o$) which are the sum of $J$ layers $(c_n^j(y))_{j=1}^J$ which have the following properties:

\begin{enumerate}
\item For all $j$, $n$ and $y$, the $c_n^j(y)$ are uniformly bounded.
\item For $j_1 \neq j_2$, $\left|\frac{1}{N}\sum_{n=1}^Nc_n^{j_1}(y)\overline{c_n^{j_2}(y)}\right| < \delta$ and
\item $Re\left(\frac{1}{N_o}\sum_{n=1}^{N_o}c_n^j(y)g(S^ny)\right)>\frac{a}{2}-\delta$, for $j=1, \ldots, J$.
\end{enumerate}

This leads to a contradiction, thus the assumption that \textbf{Theorem \ref{RTT}} does not hold on the set $X_f$ is false.  Using Ergodic Decomposition one can show that the theorem holds true if either $T$ or $S$ is a measure preserving transformation which is not ergodic.

\subsection{Extensions of the Return Times Theorem}

\begin{defn}
We will say that the \textbf{return times theorem holds for the pair $(L^s, L^t)$} if for all $f \in L^s(\mu)$ we can find a set of full measure $X_f$ such that for each $x \in X_f$ for all measure preserving systems $(Y, \mathcal{G}, \nu, S)$ and for all $g \in L^t(\nu)$ the averages
$$\frac{1}{N}\sum_{n=1}^Nf(T^nx)g(S^ny)$$
converge $\nu$-a.e.
\end{defn}

As $L^{\infty}$ is dense in $L^1$,  using the Banach Principle you can show that the return times theorem holds for the pair $(L^1, L^{\infty})$ or $(L^{\infty}, L^1)$.  Using H\"{o}lder's Inequality one can show that the return times theorem holds for $(L^p, L^q)$ where $\frac{1}{p}+\frac{1}{q} = 1$.  Using the Banach Principle we can show that this is true when $\frac{1}{p}+\frac{1}{q} < 1$.  The case where $\frac{1}{p}+\frac{1}{q} > 1$ is explored below in \textbf{Section \ref{Duality}}.

Ornstein and Weiss \cite{OW} studied the set of points on which the Return Times Theorem holds in greater detail.  They begin by rephrasing the Return Times Theorem in the following way.

\begin{theorem}
If $(X,\mathcal{F},T, \mu)$ is an ergodic dynamical system and $B \in \mathcal{F}$ has positive measure, then for $\mu$-a.e. $x_0 \in X$, the sequence $\left\{n \in \mathbb{N}:T^nx_0 \in B\right\}$ is a good universal sequence for the return times theorem, i.e. for any finite measure preserving system $(Y, \mathcal{G}, \nu, S)$ and $g \in L^1$ we have
$$\lim_K \frac{1}{K}\sum_{k=1}^K g(S^{n_k}y) = \int g d\nu$$
for $\nu$-a.e. $y \in Y$.
\end{theorem}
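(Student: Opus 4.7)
The plan is to deduce this rephrasing of the Return Times Theorem directly from Theorem \ref{RTT} applied to the bounded function $f = \chi_B$. The core idea is to convert the Ces\`aro average along the sparse return-time sequence $(n_k)$ into a weighted Ces\`aro average indexed by all of $\mathbb{N}$, where Theorem \ref{RTT} already delivers convergence.

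First I would apply Theorem \ref{RTT} with $f = \chi_B \in L^\infty(\mu)$ and intersect the resulting full-measure set $X_f$ with the Birkhoff full-measure set $\{x : N^{-1}\sum_{n=1}^N \chi_B(T^n x) \to \mu(B)\}$, then fix $x_0$ in this intersection. Writing $K(N) = \sum_{n=1}^N \chi_B(T^n x_0)$, so that $K(N)/N \to \mu(B) > 0$, the identity
$$\frac{1}{N}\sum_{n=1}^N \chi_B(T^n x_0)\,g(S^n y) \;=\; \frac{K(N)}{N}\cdot\frac{1}{K(N)}\sum_{k=1}^{K(N)} g(S^{n_k}y)$$
(the only nonzero terms on the left being those with $n = n_k$) immediately transfers the $\nu$-a.e.\ convergence granted by Theorem \ref{RTT} to the subsequence $K = K(N)$ of return-time averages. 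A sandwich argument exploiting $K(N+1) - K(N) \in \{0,1\}$ then upgrades this to convergence along every $K \in \mathbb{N}$.

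The principal obstacle is identifying the common limit as $\int g\,d\nu$. For this I would overlay Birkhoff's pointwise theorem applied to the product transformation $T \times S$ on $(X \times Y, \mu \times \nu)$: for $\mu \times \nu$-a.e.\ $(x, y)$ the weighted Ces\`aro average tends to the conditional expectation of $\chi_B \otimes g$ with respect to the $T\times S$-invariant $\sigma$-algebra, and in the case where $T \times S$ is ergodic this equals $\mu(B)\int g\,d\nu$. A Fubini comparison with the pointwise limit produced by Theorem \ref{RTT} then forces that limit to equal $\mu(B)\int g\,d\nu$ on a full-measure slice of $y$'s for $\mu$-a.e.\ $x_0$; dividing by $\mu(B)$ yields the stated conclusion. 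The extension from $g \in L^\infty(\nu)$ to $g \in L^1(\nu)$ is a routine density argument via the Banach principle, which is available because $g \mapsto g\circ S$ is a Dunford--Schwartz operator and so the associated maximal function for the averages along $(n_k)$ is of weak type.
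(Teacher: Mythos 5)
The paper itself offers no proof of this statement: it is recorded as Ornstein and Weiss's \emph{rephrasing} of Theorem \ref{RTT}, so the substance of your task is precisely the reduction you outline. The first half of your argument is correct and is the intended content of the rephrasing. Applying Theorem \ref{RTT} to $f=\chi_B$, setting $K(N)=\sum_{n=1}^N\chi_B(T^nx_0)$, and using $K(N)/N\to\mu(B)>0$ together with your displayed identity does give $\nu$-a.e.\ convergence of $\frac{1}{K}\sum_{k=1}^{K}g(S^{n_k}y)$; the passage from $K=K(N)$ to all $K$ is in fact immediate, since $K(\cdot)$ increments by $0$ or $1$ and tends to infinity, so it eventually assumes every positive integer value. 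The upgrade from $L^\infty$ to $L^1$ via the Banach principle is also sound, because $\sup_N\frac{1}{N}\sum_{n=1}^N\chi_B(T^nx_0)|g|(S^ny)$ is dominated by the standard ergodic maximal function of $|g|$, and $\sup_N N/K(N)<\infty$ for the chosen $x_0$.

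The genuine gap is in your identification of the limit as $\int g\,d\nu$. Birkhoff's theorem for $T\times S$ produces $\E(\chi_B\otimes g\mid \mathrm{Inv}(T\times S))$, which equals the constant $\mu(B)\int g\,d\nu$ only when $T\times S$ is ergodic, and ergodicity of $T$ and $S$ separately does not imply ergodicity of the product. Your ``Fubini comparison'' can only recover the \emph{integral} of the a.e.\ limit over $X\times Y$; it cannot force the pointwise value to be constant. Moreover the difficulty is not merely with your method: the claimed value is false in general. Take $T=S$ an irrational rotation of $\mathbb{T}$, $B$ an arc, and $g$ continuous. The visit times $n_k$ of $x_0$ to $B$ make $x_0+n_k\alpha$ equidistribute in $B$, hence
$$\frac{1}{K}\sum_{k=1}^{K}g(y+n_k\alpha)\longrightarrow \frac{1}{\mu(B)}\int_{B}g\bigl(t+(y-x_0)\bigr)\,dt,$$
which depends on $y$ and is generally not $\int g\,d\nu$. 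So the statement should be read as asserting the \emph{existence} of the limit (which your first two steps do establish from Theorem \ref{RTT}); identifying the limit as $\int g\,d\nu$ requires additional hypotheses guaranteeing ergodicity of the relevant product, as in the completely-positive-entropy situation of Theorem \ref{OWFKT}.
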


The first extension that they present is a previously unpublished result of D. S. Ornstein, B. Weiss, H. Furstenberg, M. Keane and J.-P. Thouvenot.

\begin{theorem}\label{OWFKT}
If $(X,\mathcal{F}, \mu, T)$ has completely positive entropy, $B \in \mathcal{F}$ with positive measure and $x_0 \in X$ is generic for $\chi_B$, then the sequence $n_1 < n_2 < n_3 < \ldots$ of times of successive visits of $x_0$ to $B$ is a good sequence for the return times theorem.  Thus for any finite measure preserving system $(Y, \mathcal{G}, \nu, S)$ and $g \in L^1(\nu)$ we have
$$\lim_K \frac{1}{K}\sum_{k=1}^K f(T^{n_k}y) = \int g d\nu.$$
\end{theorem}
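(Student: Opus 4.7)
The plan is to adapt the Bourgain--Furstenberg--Katznelson--Ornstein (BFKO) proof of Theorem \ref{RTT}, exploiting the completely positive entropy (K-property) of $T$ to upgrade the ``$\mu$-a.e.\ $x_0$'' conclusion to ``every $x_0$ generic for $\chi_B$''.

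First I would make standard reductions. By ergodic decomposition I may assume $(Y,\mathcal{G},\nu,S)$ is ergodic. Since $x_0$ is generic for $\chi_B$, the counting function $K(N) = |\{k:n_k\le N\}|$ satisfies $K(N)/N\to \mu(B)>0$, so the required convergence $\frac{1}{K}\sum_{k=1}^{K}g(S^{n_k}y)\to\int g\,d\nu$ is equivalent to
$$\frac{1}{N}\sum_{n=1}^N \chi_B(T^n x_0)\bigl(g(S^n y) - \int g\,d\nu\bigr)\longrightarrow 0 \quad \nu\text{-a.e. } y.$$
Because $T$ has completely positive entropy, its Kronecker factor is trivial, so $f:=\chi_B-\mu(B)\in\mathcal{K}^{\perp}$; the contribution $\mu(B)\cdot\frac{1}{N}\sum g(S^ny)$ tends to $\mu(B)\int g\,d\nu$ by Birkhoff applied to $S$, so it cancels. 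The target becomes
$$\frac{1}{N}\sum_{n=1}^N f(T^n x_0)\bigl(g(S^n y) - \int g\,d\nu\bigr)\longrightarrow 0.$$

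Next I would re-examine the BFKO machinery for $f\in\mathcal{K}^{\perp}$. That argument produces a $\mu$-full-measure set $X_f\subset X$ on which the theorem holds, defined as the intersection of the Birkhoff-generic set for $f$ with the spectral set
$$X_1 = \Bigl\{ x : \lim_N \frac{1}{N}\sum_{n=1}^N f(T^nx)\,\overline{f(T^ny)} = 0 \text{ for } \mu\text{-a.e. } y\Bigr\}$$
and the symbolic-genericity sets $X_2$ for $\chi_A\circ\Gamma_1$ on the range-coded factor, followed by a Rohlin-tower contradiction using the layered sequences $c_n^j(y)$. My plan is to show that under the K-hypothesis, every $\chi_B$-generic $x_0$ automatically lies in these sets. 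The $X_1$-membership reduces to continuous spectrum of $f$, which in a K-system is actually Lebesgue spectrum on $\mathcal{K}^{\perp}$, so it holds identically; for the $X_2$-conditions, I would note that the symbolic factor coded by $\chi_B$ is itself a K-factor (factors of K-systems are K) and derive the required cylinder-genericity from genericity of $x_0$ for $\chi_B$ by approximating cylinder indicators in $L^1$ along the orbit. Once these enhanced properties are verified, the Rohlin-tower/$J$-layer construction and the pairwise near-orthogonality estimates $\bigl|\frac{1}{N}\sum c_n^{j_1}\overline{c_n^{j_2}}\bigr|<\delta$ transport verbatim from BFKO to deliver the contradiction.

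The main obstacle is exactly this last bootstrap: promoting bare genericity of $x_0$ for $\chi_B$ to the richer cylinder-genericity required by the $X_2$-component of BFKO. For a general ergodic $T$ a point can be generic for $\chi_B$ while failing to be generic for the joint correlations $\chi_B\cdot\chi_B\circ T^k$ that govern the symbolic factor, in which case the BFKO contradiction cannot be extracted. The completely positive entropy hypothesis is what eliminates this pathology: any shift-invariant measure supported on the orbit closure of the $\chi_B$-symbolic trajectory of $x_0$ must, under the K-property of the symbolic factor, coincide on cylinders with the push-forward of $\mu$, so the apparently weak genericity assumption is in fact already rich enough. Verifying this propagation cleanly is the heart of the proof; once in hand, the rest of BFKO carries over without modification.
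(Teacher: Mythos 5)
A preliminary caveat: the survey does not actually prove Theorem \ref{OWFKT}. It records it as unpublished work of Ornstein, Weiss, Furstenberg, Keane and Thouvenot reported in \cite{OW}, and only situates it relative to the BFKO machinery --- the good set $X_f$ built from $X_1$, $X_2$ and the Birkhoff set, and the self-sampling condition (\ref{ss}) which the surrounding text identifies as the genuine sufficient condition on $x_0$. Your architecture (reduce to the weighted average $\frac{1}{N}\sum_{n\le N}(\chi_B(T^nx_0)-\mu(B))(g(S^ny)-\int g\,d\nu)$, then show that complete positive entropy forces every generic point into the BFKO good set) is exactly the route this framing suggests, so there is nothing to object to in the overall plan.

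The gap is in the one step you yourself flag as the heart, and the justifications you offer for it do not hold up. You assert that membership of $x_0$ in $X_1$ ``holds identically'' because $f=\chi_B-\mu(B)$ has Lebesgue spectrum in a K-system; but the spectral type of $f$ is a global $L^2$ invariant which only yields $\mu(X_1)=1$, and says nothing about whether a \emph{particular} point --- which, being generic but otherwise arbitrary, may well lie in a null set --- belongs to $X_1$. What is actually required is the self-sampling property (\ref{ss}), a statement about the joint empirical distribution of the pair of symbolic orbits $\bigl((\chi_B(T^{n+i}x_0))_i,(\chi_B(T^{n+i}x))_i\bigr)$ as $n$ ranges over $[1,N]$, and this is not determined by the empirical distribution of the orbit of $x_0$ alone. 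Your supporting claim for the $X_2$ conditions --- that every shift-invariant measure on the orbit closure of the symbolic trajectory of $x_0$ must agree on cylinders with the push-forward of $\mu$ --- is false for K-systems: orbit closures in positive-entropy subshifts (the full shift being the extreme case) typically carry a large simplex of invariant measures. Moreover the proposed bootstrap from single-function genericity to cylinder genericity by ``approximating cylinder indicators in $L^1$ along the orbit'' cannot work, because $L^1$-closeness of functions controls nothing along a fixed null orbit; indeed, under the literal reading of ``generic for $\chi_B$'' from Definition \ref{BirkGen} the statement is false: in the Bernoulli $(\tfrac12,\tfrac12)$ shift with $B=\{\omega:\omega_0=1\}$, the alternating point $x_0=(\ldots 0101\ldots)$ has visit frequency $\tfrac12=\mu(B)$, yet its visit times are the even integers and $\frac{1}{K}\sum_{k\le K}g(S^{n_k}y)$ fails to converge to $\int g\,d\nu$ for the rotation on two points. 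So the hypothesis must be read as genericity of $x_0$ for the entire symbolic process (all cylinder frequencies), and the true content of the theorem --- which your proposal still leaves unproved --- is that completely positive entropy upgrades process-genericity of the single orbit to the joint-genericity/self-sampling conditions; this requires a quantitative use of the uniform mixing ($\epsilon$-independence of the remote future) of K-processes, not merely the qualitative observation that the Kronecker factor is trivial and factors of K-systems are K.
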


As noted above in the creation of the set $X_f$ in the BFKO proof of \textbf{Theorem \ref{RTT}}, more was needed than the genericity of $x$ to satisfy the Return Times Theorem.  Thus one cannot extend \textbf{Theorem \ref{OWFKT}} to a transformation which is merely ergodic using points which are merely generic with respect to the function $\chi_B$.  Ornstein and Weiss \cite{OW} describe the extra conditions on the point $x_0$ which are necessary in order for \textbf{Theorem \ref{OWFKT}} to hold when $T$ is an ergodic transformation.

\begin{defn}
Let $(X, \mathcal{F}, \mu, T)$ be an ergodic measure-preserving transformation and $B$ a set of positive measure in $\mathcal{F}$.  A point $x_0 \in X$ is \textbf{self-sampling for $\chi_B$} if for $\mu$-a.e. $x \in X$
\begin{equation}
\lim_N \frac{1}{N} \sum_{i=1}^N \chi_B(T^ix_0)\chi_B(T^ix) = \mu(B)^2. \label{ss}
\end{equation}
\end{defn}

This condition is necessary for $x_0$ in order to create a good sequence for the Birkhoff ergodic theorem and the BFKO proof shows that it is a sufficient condition.  To demonstrate the difference between genericity and self-sampling, Ornstein and Weiss present the following example.

\begin{example}\label{OWx}
Let $(X, \mathcal{F}, T, \mu)$ be a transformation with $-1$ in the spectrum so that there is a set $B$ of measure $\frac{1}{2}$ with $\mu(TB \cap B)=0$.  A point $x_0$ that visits $B$ at the following times $i$
\begin{itemize}
\item if $(2n)! \leq i < (2n+1)!$ and $i$ is even,
\item if $(2n+1)! \leq i < (2n+2)!$ and $i$ is odd,
\end{itemize}
will be generic for $\chi_B$, but for $\mu$-a.e. $x \in X$ condition equation (\ref{ss}) will fail to hold.
\end{example}

This example has some discrete spectrum.  We may wish to consider dynamical systems which are restricted from having discrete spectrum.  Consider the following definition.

\begin{defn}
A measure preserving system $(X, \mathcal{F}, T, \mu)$ is \textbf{weakly mixing} if $1$ is the only eigenvalue of $T$.  That is to say that the Kronecker factor is reduced to the set of constant functions.
\end{defn}

Ornstein and Weiss posed the question of whether or not genericity is necessary and sufficient for \textbf{Theorem \ref{OWFKT}} in the case of a weakly mixing dynamical system.  This question is still open.  The convergence of return times averages in case of weakly mixing dynamical systems will be discussed in greater detail in \textbf{Subsection \ref{Unique}}.

The rest of Ornstein and Weiss's paper \cite{OW} is devoted to extending the Return Times Theorem to a certain class of groups.  It consists of those groups $G$ which have a sequence of finite sets satisfying
\begin{itemize}
\item $A_1 \subset A_2 \subset \cdots$, and $\cup_{n=1}^{\infty} A_n = G$
\item for all $g \in G$, $\lim_n |gA_n \Delta A_n| /|A_n| = 0$
\item there is a constant $M$ such that for all $n$
$$|A_n^{-1}A_n| \leq M|A_n|.$$
\end{itemize}
That is to say $G$ is an amenable group with F\o lner sequence $A_n$ satisfying
$$\overline{\lim_n}|A_n^{-1}A_n|/|A_n| < \infty.$$
Tempelman \cite{Tempel} has shown that this class of groups satisfy Birkhoff's Pointwise Ergodic Theorem and thus they are a nice class to look at with respect to return times.

\subsection{Unique Ergodicity and the Return Times Theorem}\label{Unique}

Connecting this to our earlier discussion, Bourgain's Return Times Theorem \textbf{Theorem \ref{RTT}} proves that for any ergodic dynamical system $(X,\mathcal{F},\mu,T)$ of finite measure and $f \in L^{\infty}(\mu)$ for $\mu$-a.e. $x \in X$ the sequence $\mathbf{a} = f(T^nx)$ is a universally good weight for a.e. convergence with functions from $L^{\infty}$.  We would like to see if we can establish other properties for dynamical systems which will ensure that the sequence $\mathbf{a} = f(T^nx)$ forms a universal good weight for a.e. convergence.  We begin by looking at the concept of uniquely ergodic dynamical systems.

\begin{defn}
A transformation $T$ is \textbf{uniquely ergodic}, if there is only one measure $T$-invariant probability measure on $X$.  For example, the irrational rotation discussed in \textbf{Example \ref{Lstable}} is an example of a uniquely ergodic dynamical system.
\end{defn}

The importance of uniquely ergodic systems is summarized by the following result of Jewett \cite{Jewett} and Krieger \cite{Krieger}.

\begin{theorem}
Let $(X, \mathcal{F}, T, \mu)$ be an ergodic measure preserving system of the nonatomic Lebesgue probability space $(X, \mathcal{F}, \mu)$.  There exists a uniquely ergodic standard system $(Y,\mathcal{G}, S, \nu)$ which is isomorphic to $(X, \mathcal{F}, T, \mu)$.
\end{theorem}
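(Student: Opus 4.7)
The plan is to build a symbolic topological model $(Y, S)$ of $(X, \mathcal{F}, T, \mu)$ and then show that the model is minimal and uniquely ergodic, with the coding map providing the desired measure-theoretic isomorphism. First I would reduce to the symbolic setting: using Krieger's generator theorem (or, in the infinite-entropy case, a countable generator together with a refinement argument), pick a finite partition $P = \{P_1, \ldots, P_k\}$ of $X$ whose translates $\bigvee_{n \in \mathbb{Z}} T^{-n} P$ generate $\mathcal{F}$ modulo $\mu$-null sets. The name map $\phi(x) = (P(T^n x))_{n \in \mathbb{Z}}$ then conjugates $T$ to the shift on $\{1,\ldots,k\}^{\mathbb{Z}}$, and $\phi_*\mu$ is shift-invariant. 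This already gives a measurable model, but for a generic choice of $P$ the orbit closure carries many invariant measures, and the whole work consists of refining $P$ so as to eliminate this defect.

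The heart of the argument is an inductive Rohlin-tower construction of Jewett--Krieger type. Fix a summable sequence $\varepsilon_n \downarrow 0$. I would build inductively finite partitions $P_n$ with $P_{n+1}$ refining $P_n$ (and refining to a generator in the limit), together with Rohlin towers $\mathcal{T}_n$ of heights $h_n \to \infty$ whose error sets have measure less than $\varepsilon_n$, and labellings of the $\mathcal{T}_n$-levels by $P_n$-names so that (i) the $P_n$-name of $\mu$-a.e.\ $x$ is, outside a set of indices of density less than $\varepsilon_n$, a concatenation of the tower blocks of stage $n$, and (ii) each $P_n$-word of length $\ell_n$ appears in every tower block of stage $n+1$ with frequency within $\varepsilon_n$ of a value independent of the block. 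The main obstacle is arranging both requirements simultaneously through the induction: (i) keeps the construction faithful to $\mu$ and to $\mathcal{F}$, while (ii) forces uniform word statistics over \emph{every} symbolic orbit. This delicate ``copying and pasting'' of Rohlin castles is the combinatorial crux of the theorem.

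Finally, define $Y$ to be the shift-closure in $\{1,\ldots,k\}^{\mathbb{Z}}$ of the set of points whose names arise from the inductive symbolic model; then $S$ is a homeomorphism of the compact metric space $Y$. Property (ii) implies that every legal word of stage $n$ occurs syndetically in each point of $Y$, so $(Y,S)$ is minimal; it also forces the Birkhoff averages of the indicator of each cylinder, and hence (by Stone--Weierstrass) of every continuous function, to converge uniformly on $Y$ to a constant. Oxtoby's criterion then gives a unique invariant probability measure $\nu$, which must equal $\phi_*\mu$. Property (i) together with the generating property extends $\phi$ to a measurable map $X \to Y$ that is a bijection modulo null sets and intertwines $T$ and $S$, yielding the required isomorphism.
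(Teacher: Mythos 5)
The paper does not prove this statement at all: it is quoted as a known result with citations to Jewett and Krieger, so there is no internal argument to compare yours against. Judged on its own, your outline is the classical Jewett--Krieger strategy (symbolic model via a generating partition, inductive Rohlin-tower surgery to force uniform block frequencies, unique ergodicity via uniform convergence of Birkhoff averages on cylinders), and as a roadmap it is the right one.

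Two points keep this from being a proof rather than a plan. First, the reduction to a subshift over a \emph{finite} alphabet is only legitimate when $h_\mu(T)<\infty$: Krieger's finite generator theorem genuinely requires finite entropy, and in the infinite-entropy case the uniquely ergodic model cannot be a subshift on a finite alphabet. The standard fix is not a ``countable generator with a refinement argument'' inside one symbolic space but an inverse limit of strictly ergodic finite-alphabet systems (or, equivalently, a strictly ergodic model on a general compact metric space); your parenthetical does not supply this, and the limit system must itself be checked to be uniquely ergodic, which is an extra (if routine) step. Second, and more seriously, the entire content of the theorem lives in the inductive step you label (i) and (ii): producing, at each stage, a tower labelling that is simultaneously faithful to $\mu$ (so the limit partition still generates and $\phi$ is injective mod $0$) and has word statistics uniform over \emph{every} block, while remaining consistent with all previous stages. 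You correctly identify this as the crux, but you give no mechanism for it -- no copying lemma, no statement of what is carried from stage $n$ to stage $n+1$, no verification that the errors $\varepsilon_n$ can be summed so that the limit partition is still a generator and the name map is still essentially one-to-one. Without that, the final paragraph (minimality, Oxtoby's criterion, and the isomorphism) is conditional on an unproved construction. As a sketch of where the proof lives, this is accurate; as a proof, the central lemma is missing.
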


In the case of uniquely ergodic operators there is a much stronger version of \textbf{Theorem \ref{Birk}} found in \cite{KB}.

\begin{theorem}\label{unique}
Let $(X, \mathcal{F}, T, \mu)$ be a uniquely ergodic measure preserving system.  Then for any $f \in \mathcal{C}(X)$
$$\frac{1}{N} \sum_{n=1}^N f(T^nx) \rightarrow \int f d\mu$$
uniformly in $X$.
\end{theorem}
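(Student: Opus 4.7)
The plan is to argue by contradiction using weak-$*$ compactness of the space of Borel probability measures on $X$ and then invoke the uniqueness hypothesis. Suppose the conclusion fails: then there exist $\epsilon > 0$, points $x_k \in X$, and integers $N_k \to \infty$ with
$$\left|\frac{1}{N_k}\sum_{n=1}^{N_k} f(T^n x_k) - \int f\,d\mu\right| \ge \epsilon$$
for every $k$. Attached to these data I would form the empirical measures $\mu_k = \frac{1}{N_k}\sum_{n=1}^{N_k}\delta_{T^n x_k}$, each a Borel probability measure on the compact metrizable space $X$.

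Next, because $\mathcal{C}(X)$ is separable, the unit ball of $\mathcal{C}(X)^*$ is weak-$*$ compact and metrizable, so $\mathcal{P}(X)$ is sequentially compact in this topology. After passing to a subsequence I may therefore assume $\mu_k \to \nu$ weakly for some $\nu \in \mathcal{P}(X)$. I would then check that $\nu$ is $T$-invariant: for every $g \in \mathcal{C}(X)$, continuity of $T$ makes $g\circ T$ continuous, and
$$\int g\circ T\,d\mu_k - \int g\,d\mu_k = \frac{1}{N_k}\bigl(g(T^{N_k+1}x_k) - g(T x_k)\bigr),$$
which tends to $0$ as $k \to \infty$ since $g$ is bounded on $X$. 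Passing to the limit yields $\int g\circ T\,d\nu = \int g\,d\nu$ for all $g \in \mathcal{C}(X)$, so $\nu \in \mathcal{M}_T(X)$.

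By unique ergodicity, $\nu = \mu$, hence $\int f\,d\mu_k \to \int f\,d\mu$, directly contradicting the lower bound of $\epsilon$. The main obstacle is really the $T$-invariance step, which implicitly uses that $T$ is continuous on $X$ (a standard assumption accompanying unique ergodicity in a topological setting); beyond that the argument is purely soft, requiring only Banach--Alaoglu and the uniqueness hypothesis. It is worth noting that the conclusion strengthens \textbf{Theorem \ref{Birk}} considerably in this context, asserting that \emph{every} point of $X$ is generic in the sense of \textbf{Definition \ref{FGen}} rather than merely $\mu$-a.e. $x$, which is precisely why uniquely ergodic models are a natural stage on which to pose finer return-times questions.
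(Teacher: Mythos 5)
Your argument is correct and complete: the contradiction setup, the passage to empirical measures $\mu_k = \frac{1}{N_k}\sum_{n=1}^{N_k}\delta_{T^n x_k}$, the weak-$*$ sequential compactness of $\mathcal{P}(X)$, the telescoping identity showing any limit point is $T$-invariant, and the final appeal to uniqueness are exactly the classical Kryloff--Bogoliouboff argument. The paper itself states this theorem without proof, merely citing \cite{KB}, so there is nothing to compare against; your proof is the standard one, and you are right to flag that the topological hypotheses ($X$ compact metric, $T$ continuous) are implicit in the statement since $f$ is taken from $\mathcal{C}(X)$.
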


Therefore if $T$ is a continuous map of the compact metric space $X$ to itself, then $T$ is uniquely ergodic (with unique ergodic measure $\mu$) if and only if every point of $X$ is generic for the measure $\mu$.

Two versions of \textbf{Theorem \ref{WW}} for uniquely ergodic transformations were proven independently by I. Assani \cite{AssaniUWW,AssaniWWE} and E. A. Robinson \cite{Robinson} and are stated below.  Extensions of the results of Assani and Robinson can be found in Walters \cite{WaltersTWW}, Santos and Walkden \cite{SW}, Lenz \cite{Lenz09, Lenz09-1} and M. Schreiber \cite{SC12}.

\begin{theorem}
\cite{AssaniUWW,AssaniWWE} Let $(X, \mathcal{F},T, \mu)$ be a standard uniquely ergodic system.  If $f \in \mathcal{C}(X) \cap \mathcal{K}^{\perp}$ then
$$\lim_N \sup_x \sup_t \left|\frac{1}{N} \sum_{n=1}^Nf(T^nx)e^{2\pi int}\right| = 0.$$
\end{theorem}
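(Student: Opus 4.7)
The plan is to combine van der Corput's inequality with the uniform convergence coming from unique ergodicity (Theorem \ref{unique}), and then exploit that $f \in \mathcal{K}^{\perp}$ has continuous spectral measure, much as in the proof that (1) implies (2) in Bourgain's Uniform Wiener-Wintner Theorem (Theorem \ref{UWW}). Set
$$A_N(x,t) = \frac{1}{N}\sum_{n=1}^{N} f(T^n x)\, e^{2\pi i n t}.$$
The immediate obstacle is that we must take a double supremum in $x$ and $t$, and $T$ need not act nicely on the parameter $t$; the idea is to decouple the two suprema using van der Corput.

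First I would apply van der Corput's inequality to the sequence $u_n = f(T^n x)e^{2\pi i n t}$. Since $u_n\overline{u_{n+h}} = f(T^n x)\overline{f(T^{n+h}x)}\, e^{-2\pi i h t}$, the modulus of the inner sum no longer depends on $t$, and one obtains a bound of the form
$$\sup_{t}\bigl|A_N(x,t)\bigr|^2 \;\leq\; \frac{H+1}{N} \;+\; \frac{2}{H}\sum_{h=1}^{H}\left|\frac{1}{N}\sum_{n=1}^{N-h} f(T^n x)\,\overline{f(T^{n+h}x)}\right|,$$
valid for all $H \leq N$ and all $x \in X$. This is the key reduction: we have traded a supremum over $t$ for an $h$-average of continuous, $t$-free correlation sums.

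Next I would use unique ergodicity. For each fixed $h$, the function $f\cdot \overline{f\circ T^h}$ lies in $\mathcal{C}(X)$, so Theorem \ref{unique} yields
$$\frac{1}{N}\sum_{n=1}^{N-h} f(T^n x)\,\overline{f(T^{n+h}x)} \;\longrightarrow\; \gamma_f(h) := \int f\cdot \overline{f\circ T^h}\, d\mu$$
uniformly in $x \in X$. Finally, since $f \in \mathcal{K}^{\perp}$, the spectral measure $\sigma_f$ of $f$ is continuous, and by Wiener's lemma
$$\frac{1}{H}\sum_{h=1}^{H}|\gamma_f(h)|^2 \;\longrightarrow\; 0 \qquad (H \to \infty),$$
hence also $\frac{1}{H}\sum_{h=1}^{H}|\gamma_f(h)| \to 0$ by Cauchy--Schwarz.

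To conclude, I would choose the parameters in the correct order: given $\varepsilon>0$, first fix $H$ so large that $\frac{1}{H}\sum_{h=1}^{H}|\gamma_f(h)| < \varepsilon$, then (using uniform convergence in $x$ for each of the finitely many $h \leq H$) choose $N_0$ so that for $N \geq N_0$ the displayed correlation sum differs from $\gamma_f(h)$ by at most $\varepsilon$ uniformly in $x$, and also so that $(H+1)/N < \varepsilon$. Plugging these into the van der Corput bound gives $\sup_{x,t}|A_N(x,t)|^2 \leq C\varepsilon$ for all $N \geq N_0$, which is the desired uniform conclusion. The main delicate point is really the first step --- recognizing that van der Corput is the tool that kills the $t$-dependence pointwise in $x$ --- after which unique ergodicity and the Kronecker-orthogonality of $f$ combine cleanly.
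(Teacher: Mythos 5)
Your argument is correct: van der Corput eliminates the $t$-dependence, unique ergodicity upgrades the correlation averages $\frac{1}{N}\sum_n (f\cdot\overline{f\circ T^h})(T^nx)$ to uniform-in-$x$ convergence toward $\hat{\sigma}_f(h)$, and Wiener's lemma applied to the continuous spectral measure of $f\in\mathcal{K}^{\perp}$ finishes the estimate, with the quantifiers ($H$ first, then $N_0$) handled in the right order. The survey only states this theorem with citations and gives no proof of its own, but this is precisely the standard route taken in the cited references and the same van der Corput strategy the paper attributes to Bourgain's uniform Wiener--Wintner theorem, so there is nothing to correct.
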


\begin{theorem}
\cite{Robinson} Let $(X, \mathcal{F}, T, \mu)$ be a standard uniquely ergodic system. Let $M_T$ be the set of eigenvalues for $T$ and let $C_T$ be the subset of eigenvalues with a corresponding continuous eigenfunction.  If $\lambda \in C_T \cup (M_T)^c$ then for every continuous function $f$ the averages
$$\frac{1}{N}\sum_{n=1}^N f(T^nx) \lambda^n$$
converge uniformly in $x$.
\end{theorem}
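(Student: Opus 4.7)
The plan is to split on whether $\lambda \in C_T$ or $\lambda \notin M_T$, using \textbf{Theorem \ref{unique}} as the main tool in each case. If $\lambda \in C_T$, fix a continuous eigenfunction $h \in \mathcal{C}(X)$ with $h \circ T = \lambda h$. Since $|h|$ is continuous and $T$-invariant, unique ergodicity forces $|h|$ to be a positive constant, and we may normalize so that $|h| \equiv 1$. Iterating the eigenvalue relation yields $\lambda^n = h(T^n x)\overline{h(x)}$, and hence
$$\frac{1}{N}\sum_{n=1}^N f(T^n x)\lambda^n = \overline{h(x)} \cdot \frac{1}{N}\sum_{n=1}^N (fh)(T^n x).$$
Since $fh \in \mathcal{C}(X)$, \textbf{Theorem \ref{unique}} gives uniform convergence of the Birkhoff average on the right to $\int fh\, d\mu$, and multiplication by the bounded function $\overline{h(x)}$ preserves this uniformity; the limit is $\overline{h(x)}\int fh\, d\mu$.

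If $\lambda = e^{2\pi i t_0} \notin M_T$, I would invoke Van der Corput's inequality for $u_n(x) := \lambda^n f(T^n x)$. The inequality bounds $|\tfrac{1}{N}\sum_n u_n(x)|^2$ by a vanishing error plus a Fej\'er-weighted partial sum of the correlations
$$\frac{1}{N}\sum_{n} u_n(x)\overline{u_{n+k}(x)} = \overline{\lambda}^k \cdot \frac{1}{N}\sum_{n} f(T^n x)\overline{f(T^{n+k} x)}, \qquad 1 \leq k \leq H.$$
Since each $f \cdot \overline{f \circ T^k}$ is continuous, \textbf{Theorem \ref{unique}} yields uniform convergence of each correlation to $\overline{\lambda}^k \gamma_f(k)$, with autocorrelation $\gamma_f(k) := \int f \cdot \overline{f \circ T^k}\, d\mu$. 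Taking $\limsup_N$ uniformly in $x$ and symmetrizing via $\gamma_f(-k) = \overline{\gamma_f(k)}$, the bound becomes
$$\limsup_N \sup_x \left|\frac{1}{N}\sum_{n=1}^N u_n(x)\right|^2 \leq \frac{1}{H+1}\sum_{|k|\leq H}\left(1 - \tfrac{|k|}{H+1}\right)\overline{\lambda}^k \gamma_f(k).$$
By Herglotz, $\gamma_f(k) = \int_{\mathbb{T}} e^{2\pi i k s}\, d\sigma_f(s)$ for a finite positive spectral measure $\sigma_f$, so the right-hand side equals $(H+1)^{-1}\int K_H(s - t_0)\, d\sigma_f(s)$, where $K_H$ is the Fej\'er kernel. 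Since $K_H(u)/(H+1) \leq 1$ everywhere and converges pointwise to $\mathbf{1}_{\{u = 0\}}$, dominated convergence yields the $H$-limit $\sigma_f(\{t_0\})$; the hypothesis $\lambda \notin M_T$ forces $\sigma_f(\{t_0\}) = 0$, so the averages converge uniformly in $x$ to $0$.

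The main technical obstacle is securing uniformity in $x$ at the Van der Corput step: almost-everywhere Birkhoff would not suffice, and it is precisely \textbf{Theorem \ref{unique}} applied to the continuous functions $f \cdot \overline{f \circ T^k}$ that allows the $N$-limit to be passed inside the sum uniformly. Once the correlations are controlled uniformly, the remainder is a standard Fej\'er-kernel computation, and the hypothesis $\lambda \notin M_T$ enters only at the final step, to annihilate the potential atom of $\sigma_f$ at $t_0$. Note the contrast between the two cases: case (a) produces a non-constant uniform limit $\overline{h(x)}\int fh\, d\mu$, while case (b) produces uniform convergence to $0$.
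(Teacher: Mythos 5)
The survey does not prove this theorem: it is stated as a quoted result of Robinson \cite{Robinson}, so there is no in-paper argument to compare against. Your proposal is, however, a correct and essentially self-contained proof, and it follows the standard route (which is also close in spirit to Robinson's original argument and to the Van der Corput strategy the survey attributes to Bourgain for the uniform Wiener--Wintner theorem). Both halves check out: in the case $\lambda \in C_T$ the factorization $\lambda^n = h(T^nx)\overline{h(x)}$ reduces the average to a Birkhoff average of the continuous function $fh$, to which \textbf{Theorem \ref{unique}} applies; in the case $\lambda \notin M_T$ the absolute-value-free (Fej\'er-positive) form of Van der Corput, uniform convergence of the finitely many correlation averages $\frac{1}{N}\sum_n (f\cdot\overline{f\circ T^k})(T^nx)$, and the Herglotz representation correctly reduce everything to $\sigma_f(\{t_0\})=0$, which is exactly what $\lambda\notin M_T$ supplies. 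Two small points deserve a sentence each in a polished write-up. First, your claim that $|h|$ is a positive constant \emph{everywhere} (not just $\mu$-a.e.) is what the argument needs, and it does follow from unique ergodicity, but the clean justification is to apply \textbf{Theorem \ref{unique}} to the continuous $T$-invariant function $|h|$: the uniform Birkhoff average of an invariant function is the function itself, so $|h(x)|=\int|h|\,d\mu$ for all $x$. Second, the identity $\lambda^n=h(T^nx)\overline{h(x)}$ for \emph{every} $x$ requires the eigenfunction equation $h\circ T=\lambda h$ to hold pointwise everywhere, not merely $\mu$-a.e.; this is how $C_T$ is meant (a topological eigenvalue), but it is worth saying explicitly, since an $L^2$ eigenfunction that merely admits a continuous version satisfying the relation a.e. would only give the identity on the support of $\mu$.
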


Using the ranges construction in the BFKO proof, E. Lesigne, C. Mauduit and B. Moss\'e \cite{LMM}, describe a criteria for a sequence to form a universal good weight for a.e. convergence to $0$ for functions in $L^1$.  Note the connection between these criteria and the self-sampling definition from \cite{OW}.

\begin{theorem}\label{LMM}
\textbf{Part 1:} Given a bounded sequence $u_n$ of complex numbers such that for all $\delta > 0$ there exists some $L_{\delta} > 0$ such that for all $L > L_{\delta}$ there exists some $M_{\delta, L} > 0$ such that for all  $M > M_{\delta, L}$
$$\frac{1}{M} \#\left\{m \in [0,M] : \forall n \in [L_{\delta}, L], \left| \frac{1}{n}\sum_{k=0}^{n-1} u_{m+k}\overline{u_k}\right|< \delta \right\} > 1 - \delta.$$
Then for any probability measure preserving system $(Y, \mathcal{G}, \nu, S)$ and every $f \in L^1(\nu)$,
\begin{equation}
\lim_n \frac{1}{n}\sum_{k=0}^{n-1} u_k \cdot g(S^ky) = 0 \label{crit1}
\end{equation}
for $\nu$-a.e. $y$.

\textbf{Part 2:}
Given a probability measure preserving system $(X, \mathcal{F}, T, \mu)$ and a bounded measurable function $f$ of $Y$ which is in $\mathcal{K}^{\perp}$ (the orthocomplement of the Kronecker factor), then for almost every $x \in X$ the point $x$ is generic for the function $f$ in the dynamical system and
\begin{equation}
\lim_n \frac{1}{n}\sum_{k=0}^{n-1}f(T^kx)\cdot \overline{f(T^k x^{\prime})} = 0 \label{crit2}
\end{equation}
for $\mu$-a.e. $x^{\prime}$.

Finally, if a point $x$ satisfies  equation (\ref{crit2}), then the sequence $u_n = f(T^nx)$ satisfies equation (\ref{crit1}).
\end{theorem}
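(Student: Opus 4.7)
The plan is to handle the three parts in order.

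For Part 1, I would adapt the BFKO ranges construction sketched earlier in the survey. Assume for contradiction that (\ref{crit1}) fails for some system $(Y, \mathcal{G}, \nu, S)$ and $g \in L^{\infty}(\nu)$ (the $L^1$ case then follows by the Banach principle). After reducing to the real part, there is a positive-measure set $B_1 \subset Y$ on which $\limsup_N \operatorname{Re}\bigl(\frac{1}{N}\sum_{k=0}^{N-1} u_k g(S^k y)\bigr) > a/2$ for some $a > 0$. Apply Rohlin's tower to extract a positive-measure $B_2 \subset B_1$ whose first $K$ iterates under $S$ are pairwise disjoint. For each $y \in B_2$, choose $J$ well-separated ranges $[L_j, M_j] \subset [L_\delta, L]$ along the orbit and build $J$ layers $c_n^j(y)$, shifted pieces of the sequence $u$ supported in a single tower level, such that: (i) they are uniformly bounded, (ii) they are near-orthogonal, $|\frac{1}{N_o}\sum c_n^{j_1}\overline{c_n^{j_2}}| < \delta$ (exactly where the Part 1 hypothesis enters, since the cross averages unpack precisely to $\frac{1}{n}\sum u_{m+k}\overline{u_k}$-type averages), and (iii) each correlates with $g$ at level $\operatorname{Re}(\frac{1}{N_o}\sum c_n^j g(S^n y)) \geq a/2 - \delta$. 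Summing the layers and applying Cauchy-Schwarz yields
$$J(a/2 - \delta)^2 \leq \|g\|_\infty^2(C + J\delta),$$
which forces $J$ bounded, contradicting the freedom to take $J$ arbitrarily large once $\delta$ is chosen small relative to $a$ and $\|g\|_\infty$.

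For Part 2, genericity of $\mu$-a.e.\ $x$ for $f$ is immediate from \textbf{Theorem \ref{Birk}}, with limit $\int f\,d\mu = 0$ since $f \in \mathcal{K}^{\perp}$. For (\ref{crit2}) I would work on the product system with $h_n(x, x') := \frac{1}{n}\sum_{k=0}^{n-1} f(T^k x)\overline{f(T^k x')}$ and compute
$$\int |h_n|^2\,d(\mu \times \mu) = \frac{1}{n^2}\sum_{k, l = 0}^{n-1} |\hat\sigma_f(l - k)|^2 = \frac{1}{n}\sum_{|m| < n}\bigl(1 - \tfrac{|m|}{n}\bigr)|\hat\sigma_f(m)|^2,$$
where $\sigma_f$ is the spectral measure of $f$. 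By Wiener's lemma the right-hand side converges to $\sum_\theta \sigma_f(\{\theta\})^2$, which vanishes because $f \in \mathcal{K}^{\perp}$ forces $\sigma_f$ to be continuous. Hence $h_n \to 0$ in $L^2(\mu \times \mu)$; Birkhoff applied to $f \otimes \bar f$ on $(X \times X, T \times T, \mu \times \mu)$ also delivers an a.e.\ limit, and the two limits must coincide, yielding $h_n \to 0$ $(\mu\times\mu)$-a.e. Fubini then gives (\ref{crit2}) for $\mu$-a.e.\ $x$.

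For the final claim, fix $x$ in the intersection of the Part 2 full-measure set, the uniform Wiener-Wintner full-measure set (\textbf{Theorem \ref{UWW}}), and the set of $x$ generic for each of the countably many functions $F_{k, l}(y) := f(T^k y)\overline{f(T^l y)}$. With $u_n = f(T^n x)$ and $C_n(y, x) := \frac{1}{n}\sum_{k=0}^{n-1} f(T^k y)\overline{f(T^k x)}$ we have $\frac{1}{n}\sum_{k=0}^{n-1} u_{m+k}\overline{u_k} = C_n(T^m x, x)$, so Part 1's hypothesis reduces to a density bound on exceptional $m$ along the orbit of $x$. The decomposition
$$|C_n(y, x)|^2 = \frac{1}{n^2}\sum_{k, l = 0}^{n-1} F_{k, l}(y)\,\overline{f(T^k x)}f(T^l x)$$
writes $|C_n(y, x)|^2$ as a linear combination (with $y$-independent, $x$-dependent coefficients) of the $x$-independent functions $F_{k, l}$, and the spectral identity
$$\int |C_n(y, x)|^2\,d\mu(y) = \int \left|\frac{1}{n}\sum_{k=0}^{n-1} f(T^k x)e^{-2\pi ik\theta}\right|^2 d\sigma_f(\theta),$$
combined with uniform Wiener-Wintner, forces this integral to $0$ as $n \to \infty$. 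Together with dominated convergence (valid by (\ref{crit2}) and the $L^\infty$ bound), $\int \sup_{n \geq L_\delta}|C_n(y, x)|^2\,d\mu(y) \to 0$, so one may choose $L_\delta$ making it smaller than $\delta^3/4$. The main technical obstacle, which the polynomial decomposition is designed to skirt, is that the natural cutoff $\sup_{n \geq L_\delta}|C_n(\cdot, x)|^2$ depends on the reference point $x$, so a direct Birkhoff invocation at $y = x$ would be circular; routing the ergodic theorem through the $x$-independent quadratic blocks $F_{k, l}$ (for which $x$ is generic by construction) controls the relevant orbital averages, after which Chebyshev translates the $L^1(\mu)$ smallness of $\sup_n |C_n|^2$ into the exceptional-$m$ density estimate required by the Part 1 hypothesis.
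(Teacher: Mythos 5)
The survey itself does not prove Theorem \ref{LMM}: it cites Lesigne--Mauduit--Moss\'e and records only that their argument runs through the BFKO ranges construction. Measured against that indicated route, your Part 1 is a faithful reconstruction (contradiction, Rohlin tower, $J$ near-orthogonal layers, Cauchy--Schwarz forcing $J$ bounded), and your Part 2 is correct and standard: the identity $\int|h_n|^2\,d(\mu\times\mu)=\frac1n\sum_{|m|<n}(1-\frac{|m|}{n})|\hat\sigma_f(m)|^2$, Wiener's lemma, continuity of $\sigma_f$ for $f\in\mathcal{K}^{\perp}$, and reconciling the $L^2$ limit with the Birkhoff a.e.\ limit on the (possibly nonergodic) product system. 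The gap is in the final claim.

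Two problems there. First, the claim as stated is a pointwise implication --- any $x$ satisfying (\ref{crit2}) yields a sequence satisfying the Part 1 criterion --- whereas you prove it only for $x$ in the intersection of several auxiliary full-measure sets (uniform Wiener--Wintner, genericity for the $F_{k,l}$), which is an ``almost every $x$'' statement, not the stated implication. Second, and more seriously, the mechanism you propose for converting the measure bound $\mu\{y:\sup_{L_\delta\le n\le L}|C_n(y,x)|\ge\delta\}\le\delta$ into the orbital density bound $\frac1M\#\{m\le M:\exists\, n\in[L_\delta,L],\ |C_n(T^mx,x)|\ge\delta\}\le\delta$ does not close. Genericity of $x$ for the blocks $F_{k,l}$ controls $\frac1M\sum_{m\le M}|C_n(T^mx,x)|^2$ for each \emph{fixed} $n$, since $|C_n(\cdot,x)|^2$ is a finite linear combination of the $F_{k,l}$; but the event you must count is governed by $\max_{L_\delta\le n\le L}|C_n(\cdot,x)|^2$, which is not such a linear combination, and a union bound over $n\in[L_\delta,L]$ costs a factor of order $L-L_\delta$, which would require summability of $\int|C_n(\cdot,x)|^2\,d\mu$ in $n$ rather than mere convergence to $0$. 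The standard repair --- and exactly what the set $X_2$ in the BFKO proof is for --- is to demand genericity of $x$ for a countable family rich enough to approximate every continuous (or, for finite-range $f$, every indicator) function of the finite block $(f(y),f(Ty),\dots,f(T^{L-1}y))$, e.g.\ polynomials with rational coefficients in these coordinates or the functions $\chi_A\circ\Gamma_1$; then the $x$-dependent maximum is itself approximable within the generic class and Chebyshev applies. Without enlarging the generic family in this way, the last step of your argument has nothing to act on.
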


Using this criterion, given that unique ergodicity as seen in \textbf{Theorem \ref{unique}} gave us the Birkhoff's Pointwise Convergence Theorem (with uniform convergence) for all $x \in X$, we pose the following questions initially posed by Assani and Host respectively.

\begin{ques}
If $(X,\mathcal{F}, \mu, T)$ is a uniquely ergodic probability system can we strengthen Bourgain's Return Times Theorem to give uniform convergence for all $x \in X$.  That is to say, is $\mathbf{a} = f(T^nx)$ a good universal weight for a.e. pointwise convergence for functions in $\mathcal{C}(X)$?
\end{ques}

\begin{ques}
If $(X,\mathcal{F}, \mu, T)$ is a weakly mixing dynamical system and $f \in \mathcal{C}(X)$.  Is the sequence $f(T^nx)$ a good universal weight for the pointwise convergence in $L^1$ for each $x \in X$?
\end{ques}

In Proposition 5.3 of \cite{AssaniWWE}, Assani gives a partial answer to these questions.  For this result we need the following definition.

\begin{defn}
A weakly mixing measure preserving system $(X, \mathcal{F}, T, \mu)$ is said to have \textbf{Lebesgue spectrum} if the spectral measure of each function $f \in L^2(\mu)$ with $\int f d\mu = 0$, $\sigma_f$, is absolutely continuous with respect to Lebesgue measure.
\end{defn}

\begin{theorem}\cite{AssaniWWE}
Let $(X, \mathcal{F}, T, \mu)$ be a uniquely ergodic system which is weakly mixing with Lebesgue spectrum.  Consider $f \in \mathcal{C}(X)$.  Then for all $x \in X$, the sequence $\mathbf{a} = f(T^nx)$ is a good universal weight for the a.e. pointwise convergence in $L^1$.
\end{theorem}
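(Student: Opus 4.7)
The plan is to reduce, via Theorem~\ref{LMM}, to verifying the self-correlation hypothesis (crit2) at every $x \in X$, then exploit Assani's uniform Wiener--Wintner theorem together with the Lebesgue spectrum assumption to prove it.

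Decompose $f = c + f_0$ with $c = \int f\, d\mu$ and $f_0 = f - c$. Since $T$ is weakly mixing, $\mathcal{K}$ reduces to the constants, so $f_0 \in \mathcal{C}(X)\cap \mathcal{K}^{\perp}$. The constant piece contributes $c \cdot \frac{1}{N}\sum_{n=1}^N g(S^ny)$, which converges $\nu$-a.e.\ by Birkhoff for any $g \in L^1(\nu)$. It thus suffices to show: for every $x \in X$, every measure-preserving system $(Y, \mathcal{G}, \nu, S)$, and every $g \in L^1(\nu)$,
$$
\frac{1}{N}\sum_{n=1}^N f_0(T^nx)\, g(S^ny) \to 0 \quad \text{for $\nu$-a.e.\ }y.
$$
By the last sentence of Theorem~\ref{LMM} Part~2, it is enough to verify, for every $x \in X$, the correlation condition (crit2):
$$
\lim_n \frac{1}{n}\sum_{k=0}^{n-1} f_0(T^kx)\overline{f_0(T^kx')} = 0 \quad \text{for $\mu$-a.e.\ } x'.
$$
Then $u_n = f_0(T^n x)$ satisfies the hypothesis of Theorem~\ref{LMM} Part~1, which delivers the desired $\nu$-a.e.\ convergence for every $g \in L^1(\nu)$.

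Fix $x \in X$ and set $G_n(x, x') = \frac{1}{n}\sum_{k=0}^{n-1} f_0(T^kx)\overline{f_0(T^kx')}$. Applying the spectral theorem to the Koopman operator on $L^2(\mu)$, writing $\overline{f_0}\circ T^k = U^k \overline{f_0}$ and using the spectral measure $\sigma_{f_0}$ of $f_0$, one obtains the Plancherel-type identity
$$
\|G_n(x, \cdot)\|_{L^2(\mu)}^2 = \int_{\mathbb{T}} |\Phi_n(x, \theta)|^2\, d\sigma_{f_0}(\theta), \qquad \Phi_n(x, \theta) = \frac{1}{n}\sum_{k=0}^{n-1} f_0(T^kx)\, e^{-2\pi i k\theta}.
$$
Because $T$ is uniquely ergodic and $f_0 \in \mathcal{C}(X)\cap \mathcal{K}^{\perp}$, Assani's uniform Wiener--Wintner theorem gives $\epsilon_n := \sup_{x,\theta}|\Phi_n(x,\theta)| \to 0$ as $n \to \infty$. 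Since $\sigma_{f_0}(\mathbb{T}) = \|f_0\|_2^2 < \infty$ (Lebesgue spectrum ensures $\sigma_{f_0}$ is absolutely continuous and hence finite), we conclude that $\|G_n(x,\cdot)\|_{L^2(\mu)} \leq \epsilon_n \|f_0\|_2 \to 0$, uniformly in $x \in X$.

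The main obstacle is upgrading the uniform $L^2(\mu)$ convergence of $G_n(x, \cdot) \to 0$ to pointwise $\mu$-a.e.\ convergence for each individual $x \in X$. The plan is: choose a subsequence $(N_j)$, independent of $x$, along which $\epsilon_{N_j}^2$ is summable (for instance $N_j = \min\{n : \epsilon_n \leq 1/j\}$, which works because $\epsilon_n \to 0$); then $\sum_j \int |G_{N_j}(x, x')|^2\, d\mu(x') < \infty$ for every $x$, so Borel--Cantelli yields $G_{N_j}(x, x') \to 0$ for $\mu$-a.e.\ $x'$, for every $x$. To fill in the full sequence I would combine the one-step oscillation estimate $|G_{n+1}-G_n| \leq 2\|f_0\|_\infty^2/(n+1)$ with the weighted Birkhoff maximal bound $\sup_n |G_n(x, x')| \leq \|f_0\|_\infty \cdot M^*|f_0|(x')$, which lies in every $L^p(\mu)$, $p > 1$, by Hardy--Littlewood. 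The delicate point is balancing the sparsity of $(N_j)$ (required for summability of $\epsilon_{N_j}^2$) against its density (required so the $O(1/n)$ oscillation closes the gaps between consecutive terms of the subsequence); the absolute continuity of $\sigma_{f_0}$ furnished by the Lebesgue spectrum hypothesis is precisely what sharpens the integral $\int|\Phi_n|^2\, d\sigma_{f_0}$ enough to make this balance succeed. With (crit2) thereby established for every $x \in X$, Theorem~\ref{LMM} completes the proof.
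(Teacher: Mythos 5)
Your overall architecture is sound and correctly identifies the two ingredients (the LMM criterion applied in the first system, and unique ergodicity plus Lebesgue spectrum to control the self-correlations at \emph{every} $x$), but there is a genuine gap at the step you yourself flag as ``delicate,'' and as set up it cannot be closed. The only quantitative input you extract from the hypotheses is $\epsilon_n = \sup_{x,\theta}|\Phi_n(x,\theta)| \to 0$, with no rate. From $\|G_n(x,\cdot)\|_{L^2(\mu)} \le \epsilon_n\|f_0\|_2$ alone, Borel--Cantelli forces you to pass to a subsequence $(N_j)$ with $\sum_j \epsilon_{N_j}^2 < \infty$; if $\epsilon_n$ decays arbitrarily slowly this subsequence must be arbitrarily sparse, while the one-step oscillation bound $|G_{n+1}-G_n| = O(1/n)$ only controls the gap $[N_j, N_{j+1}]$ up to an error of order $\log(N_{j+1}/N_j)$, which requires $N_{j+1}/N_j \to 1$. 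These two requirements are incompatible without a rate, and the appeal to absolute continuity of $\sigma_{f_0}$ ``sharpening the integral'' is not carried out --- indeed the bound $\int_{\T}|\Phi_n|^2\,d\sigma_{f_0} \le \epsilon_n^2\,\sigma_{f_0}(\T)$ you actually use never sees the absolute continuity at all. The way Lebesgue spectrum genuinely enters is different: by Parseval in $\theta$, $\int_{\T}|\Phi_n(x,\theta)|^2\,d\theta = \frac{1}{n^2}\sum_{k=0}^{n-1}|f_0(T^kx)|^2 \le \|f_0\|_\infty^2/n$ for every $x$, so if the spectral density of $\overline{f_0}$ were bounded you would get $\|G_n(x,\cdot)\|_{L^2(\mu)}^2 = O(1/n)$ uniformly in $x$, which is summable along $n = j^2$ and compatible with the $O(1/n)$ oscillation estimate. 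For a general $L^1$ density one must additionally split $\overline{f_0} = h_1 + h_2$ in its cyclic subspace, with $\sigma_{h_1}$ of bounded density and $\|h_2\|_2$ small, and control the $h_2$-contribution to $\sup_n|G_n(x,\cdot)|$ by the ergodic maximal inequality applied to $|h_2|$. None of this machinery is in your write-up, and it is precisely the content of the theorem.

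For comparison: the survey does not reproduce Assani's proof (it cites Proposition 5.3 of \cite{AssaniWWE}), but the model argument it does display, in the section on Wiener--Wintner dynamical functions, indicates the intended and more economical route. There one first upgrades the qualitative uniform Wiener--Wintner statement to a quantitative one --- via van der Corput, $\sup_\theta|\Phi_N(x,\theta)|^2 \lesssim \frac{1}{H} + \frac{1}{H}\sum_{h=1}^H\bigl|\frac{1}{N}\sum_n f_0(T^nx)\overline{f_0(T^{n+h}x)}\bigr|$, where unique ergodicity makes the inner averages converge uniformly in $x$ to $\widehat{\sigma}_{f_0}(h)$ and Lebesgue spectrum makes these Fourier coefficients tend to $0$ by Riemann--Lebesgue, yielding a power-type rate. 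One then applies the spectral theorem in the \emph{second} system to $g \in L^2(\nu)$, obtains summability of $\bigl\|\frac{1}{N^\beta}\sum_{n\le N^\beta} f_0(T^nx)g(S^ny)\bigr\|_{L^2(\nu)}^2$, and fills the gaps with the pointwise ergodic theorem for $|g|$, finishing with the Banach principle to pass from $L^2(\nu)$ to $L^1(\nu)$. Your detour through crit2 of \textbf{Theorem \ref{LMM}} is a legitimate alternative skeleton, but it only becomes a proof once the rate is extracted as above; I would encourage you to either supply the Parseval-plus-truncation argument in the first system or switch to the second-system argument, where the same quantitative bound does all the work in one pass.
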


\subsection{A Joinings Proof of the Return Times Theorem}

Around the same time as the above work was being done, D. Rudolph was working on a proof of \textbf{Theorem \ref{RTT}} as well.  His proof which was published in \cite{RudolphAJP} transfers the problem to the study of shift invariant measures defined on the space of sequences.  He then uses joinings of these measures to reach the same conclusion.  Rudolph acknowledges that his proof follows essentially the same path as the BFKO proof \cite{RTT2}, using the characteristics of genericity and self-sampling to give a proof by contradiction.

Rudolph begins his proof of Bourgain's Return Times Theorem (\textbf{Theorem \ref{RTT}}) by determining the set of full measure in $X$ on which the theorem should hold true.  He calls the set $G(f)$.  The set $G(f)$ is created using the definition of a product-null function and aspects of genericity with respect to $f,f$ (that is the self-sampling property).  This is akin to the construction of the set $X_1$ and ultimately $X_f$ in the BFKO proof.  It is exactly the set of points which satisfies equation (\ref{crit2}) in the Theorem of Lesigne, Mauduit and Moss\'e (\textbf{Theorem \ref{LMM}}).

\begin{defn}
Suppose that $(X, \mathcal{F}, \mu, T)$ is a dynamical system  and $f \in L^{\infty}(\mu)$ with $\int  f d\mu = 0$.  A function $f$ is \textbf{product-null} if for $\mu \times \mu$-a.e. $(x_1, x_2)$,
$$\lim_n\frac{1}{n}\sum_{i=0}^{n-1}f(T^i(x_1))\overline{f}(T^i(x_2))=0.$$
\end{defn}

The link between product-null functions, spectral measure and the Kronecker factor is outlined in the following lemma of Rudolph.  Note that this lemma actually corrects the corresponding statement in the BFKO proof.

\begin{lemma}
For $f \in L^{\infty}(\mu)$ with $\int  f d\mu = 0$, the following are equivalent:
\begin{enumerate}
\item $f$ is product-null.
\item The spectral measure of $f$ is non-atomic.
\item $f$ is orthogonal to all eigenfunctions of $T$, i.e. $f \in \mathcal{K}^{\perp}$.
\item For $\mu$-a.e. ergodic component $\hat{\mu}$ of $\mu \times \mu$
$$\int f(x_1)\overline{f}(x_2)d\hat{\mu}=0.$$
\end{enumerate}
\end{lemma}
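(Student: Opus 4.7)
The plan is to verify the four conditions are equivalent via the chain $(3) \Leftrightarrow (2) \Leftrightarrow (1) \Leftrightarrow (4)$. The three main tools will be: (i) the spectral theorem for the Koopman unitary $U_T$ to relate the spectral measure $\sigma_f$ to the Kronecker decomposition; (ii) Birkhoff's theorem applied to $T \times T$ on $(X \times X, \mu \times \mu)$ together with bounded convergence to pass freely between pointwise a.e.\ and $L^2$ statements (legitimate because $f \in L^\infty$); and (iii) the ergodic decomposition of $\mu \times \mu$ to interpret the Birkhoff limit as an average over ergodic components.

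First I would handle $(3) \Leftrightarrow (2)$. Decomposing $f = \sum_\lambda \langle f, e_\lambda\rangle e_\lambda + f^\perp$ with $f^\perp \in \mathcal{K}^\perp$, the corresponding decomposition of the spectral measure reads
$$\sigma_f = \sum_\lambda |\langle f, e_\lambda\rangle|^2 \delta_\lambda + \sigma_{f^\perp},$$
where $\sigma_{f^\perp}$ has no atoms; this is essentially the defining property of the Kronecker factor $\mathcal{K}$ as the maximal subspace carrying the discrete part of the spectrum. Hence $\sigma_f$ is non-atomic iff $\langle f, e_\lambda\rangle = 0$ for every eigenvalue $\lambda$, iff $f \in \mathcal{K}^\perp$.

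Next I would show $(1) \Leftrightarrow (2)$ by computing the $L^2$ norm of
$$G_N(x_1, x_2) = \frac{1}{N}\sum_{n=0}^{N-1} f(T^n x_1)\overline{f(T^n x_2)}.$$
Fubini and the identity $|\hat\sigma_f(k)|^2 = \int\!\!\int (z\bar w)^k\, d\sigma_f(z)d\sigma_f(w)$ give
$$\|G_N\|_{L^2(\mu \times \mu)}^2 = \frac{1}{N^2}\sum_{n,m=0}^{N-1} |\hat\sigma_f(n-m)|^2 = \int\!\!\int \frac{1}{N^2}\left|\sum_{k=0}^{N-1} (z\bar w)^k\right|^2 d\sigma_f(z)\,d\sigma_f(w),$$
and dominated convergence (the integrand is bounded by $1$ and tends pointwise to $\mathbf{1}_{\{z=w\}}$) shows this quantity tends to $\sum_\lambda \sigma_f(\{\lambda\})^2$. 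Since $\|G_N\|_\infty \leq \|f\|_\infty^2$, a.e.\ convergence of $G_N$ to $0$ and $L^2$ convergence to $0$ are equivalent, and by the display both amount to $\sigma_f$ being non-atomic. Finally, $(1) \Leftrightarrow (4)$ is immediate from Birkhoff applied to $F(x_1,x_2) = f(x_1)\overline{f(x_2)} \in L^1(\mu\times\mu)$ under the measure-preserving $T \times T$: the pointwise limit equals $\mathbb{E}_{\mu\times\mu}(F \mid \mathcal{I}_{T\times T})(x_1,x_2) = \int F\, d\hat\mu_{(x_1,x_2)}$, where $\hat\mu_{(x_1,x_2)}$ is the ergodic component of $(x_1,x_2)$, so vanishing a.e.\ of the limit is exactly vanishing of $\int F\,d\hat\mu$ on a.e.\ ergodic component.

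The main obstacle I expect is the spectral bookkeeping underlying $(3) \Leftrightarrow (2)$: namely, justifying that $\sigma_{f^\perp}$ is continuous for $f^\perp \in \mathcal{K}^\perp$ and that the atoms of $\sigma_f$ are in bijection with the eigenvalues at which $\langle f, e_\lambda\rangle \neq 0$, with mass exactly $|\langle f, e_\lambda\rangle|^2$. This is where ergodicity of $T$ enters (so that eigenvalues are simple and the atom masses take the clean form above). Once that identification is in hand, the remaining work reduces to routine manipulation with Fubini, dominated convergence, Birkhoff's theorem, and the ergodic decomposition. It is worth noting that this lemma corrects the analogous statement in \cite{RTT2}, so special care should be taken that the quantified eigenvalue $\lambda$ in (2)--(3) ranges over the point spectrum of $T$ rather than over some auxiliary set.
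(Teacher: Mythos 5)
The paper states this lemma without proof --- it is quoted from Rudolph's joinings paper \cite{RudolphAJP} as part of the survey's summary of his argument --- so there is no in-paper proof to compare against. Your argument is correct and is the standard one: $(3)\Leftrightarrow(2)$ via the identification $\sigma_f(\{\lambda\})=\|P_\lambda f\|_2^2$ (which, as you note, uses the mean ergodic theorem applied to $\lambda^{-1}U_T$ and simplicity of eigenvalues for ergodic $T$); $(2)\Leftrightarrow(1)$ via the Wiener-lemma computation $\|G_N\|_{L^2(\mu\times\mu)}^2\to\sum_\lambda\sigma_f(\{\lambda\})^2$; and $(1)\Leftrightarrow(4)$ via Birkhoff for $T\times T$ and the ergodic decomposition of $\mu\times\mu$.

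One small point of care: the sentence ``since $\|G_N\|_\infty\leq\|f\|_\infty^2$, a.e.\ convergence to $0$ and $L^2$ convergence to $0$ are equivalent'' is not justified by boundedness alone --- uniform boundedness gives only the implication from a.e.\ to $L^2$ (dominated convergence), while $L^2$ convergence by itself yields only a.e.\ convergence along a subsequence. The repair is exactly the tool you list in your opening paragraph: Birkhoff's theorem for $T\times T$ applied to $F(x_1,x_2)=f(x_1)\overline{f(x_2)}$ guarantees that $G_N$ converges $\mu\times\mu$-a.e.\ to \emph{some} limit, and then $L^2$ convergence to $0$ forces that a.e.\ limit to be $0$. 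Make that dependence explicit where you invoke the equivalence, and the proof is complete.
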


As in the BFKO proof, Rudolph reduces the problem using various techniques to a simpler setting.  Using spectral decomposition, he assumes ergodicity.  Define $H$ as the subset in $L^p(\mu)$ of functions $f$ for which the theorem holds true.  Using H\"{o}lder's inequality and properties of limits, one can show that $H$ is a closed subspace.  In a proof akin to the BFKO argument, Rudolph shows that all of the eigenfunctions lie in $H$.

What if $H$ were not all of $L^p(\mu)$?  Then there would be have to be a product null $f$ which was not in $H$.  This is deduced by taking a function which is not in $H$ and using the decomposition of its measure into atomic and continuous parts to create a product null function which must also not be in $H$.  This brings us back to the earlier classification of $G(f)$ which was based off of the averages for product null functions.  Thus the theorem has been reduced to looking only at product null functions and averages which converge to $0$ rather than converge to some general limit.

Let $H^*$ be all of those functions $g \in L^1(\nu)$ for which the return times averages do not converge to 0 $\nu$-a.e. for this $f$.  Using a similar argument as for $H$, one can show this is a closed subspace of $L^1(\nu)$.  Thus there is a product null $f$ and a function $g$ such that for some element $x \in G(f)$ there is a set of positive measure $B$ on which

\begin{equation}\label{Contra}
\overline{\lim}_n\left|\frac{1}{n}\sum_{i=0}^{n-1}f(T^ix)\overline{g}(S^iy)\right| \geq a
\end{equation}
for some $a>0$.  This is akin to the set $B_1$ and $a$ constructed in equation (\ref{Rohlin}) of the BFKO proof.

The key technique of Rudolph's argument is found in his Lemma 10 where he transfers over the above assumption to the language of measures and joinings.  This gives you a series of measures which create a contradiction with the product null characterization and genericity for these points.  Because the function $f$ is product-null, you have points that are $f$,$f$ generic for a measure where the integral of the points is $0$, but our contradictive assumption will allow us to show that because of equation (\ref{Contra}) this integral is bounded below by $a$.

In studying the current status of the study of return times averages, we will look at the problem from three different angles: the multiterm case, characteristic factors, and breaking the duality.  Each of these areas has made significant progress since the publishing of the proof of the Bourgain's Return Times Theorem (\textbf{Theorem \ref{RTT}}).  For each topic, we will present a bit of the historical background, the current results and some open questions still under consideration.

\section{The Multiterm Return Times Theorem}\label{Multi}

In 1989, D. Rudolph visited the Department of Mathematics at the University of North Carolina at Chapel Hill while he was working on his joining proof of J. Bourgain's Return Times Theorem.  Discussions of these topics continued as I. Assani visited Maryland in spring of 1990 and D. Rudolph returned to Chapel Hill for the entire spring of 1991.  In their discussions, I. Assani demonstrated how one could extend the Return Times Theorem result to pairs of functions satisfying the H\"olderian duality and mentioned as a follow-up the following problems:

\begin{enumerate}
\item The break of duality for the Return Times Theorem
\item The Multiterm Return Times Theorem
\end{enumerate}

At a conference held at the University of North Carolina at Chapel Hill in the fall of 1991, I. Assani again mentioned these two open questions.  Specifically, he raised the question, ``If the Kronecker factor characterizes the functions for which the return times limit is not zero, then what could be the factor which would characterize the three term return times or more generally the $h$ term return times theorem?''  An answer to this question is D. Rudolph's Multiterm Return Times Theorem that we will discuss in this section.\footnote{See Math Review MR1489899 (99c:28055) for more information on the historical development.} The second question on the break of duality is discussed in \textbf{Section \ref{Duality}}. Note that evidence of the validity of such result was first announced for the weakly mixing case in \cite{AssaniMRA}.

When we say multiterm return times we are looking for $\mu_i$-a.e. convergence in the same universal sense for averages of the form
$$\frac{1}{N} \sum_{n=1}^{N}\prod_{i=1}^H f_i(T_i^nx_i)$$
(universal sense meaning that the sets of full measure $X_{f_i}$ associated with each bounded function $f_i$ depend only on those $j$ with $1\leq j<i$).

As the Wiener-Wintner Theorem (\textbf{Theorem \ref{WW}}) was a useful tool in the BFKO proof of the Return Times Theorem (\textbf{Theorem \ref{RTT}}), it was a logical first step to look at a multiterm version of the Wiener-Wintner Theorem.

\begin{theorem}\label{ALR}
Let $(X,\mathcal{F}, \mu, T)$ be a measure preserving system and $f \in L^2(\mu)$.  For $\mu$-a.e. $x$, for any measure preserving system $(Y, \mathcal{G}, \nu, S)$ and any $g \in L^2(\mu)$ for $\nu$-a.e. $y$ and for all $\theta \in \mathbb{R}$, the sequence
$$\frac{1}{N}\sum_{n=0}^{N-1}f(T^nx)g(S^ny)e^{in\theta}$$
converges.
\end{theorem}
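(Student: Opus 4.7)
The approach is a Kronecker-decomposition strategy closely parallel to the proofs of both the Wiener-Wintner and Return Times theorems. Write $f = f_1 + f_2$ with $f_1 \in \mathcal{K}$, the Kronecker factor of $(X, \mathcal{F}, \mu, T)$, and $f_2 \in \mathcal{K}^{\perp}$, and treat the two pieces separately. By a Banach-principle argument it suffices to establish the theorem assuming $f \in L^{\infty}(\mu)$ and $g \in L^{\infty}(\nu)$; the resulting maximal inequality propagates a.e. convergence back to $L^2$.

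For the Kronecker part, expand $f_1 = \sum_j \alpha_j e_j$ in an $L^2$-orthonormal basis of eigenfunctions with $T e_j = e^{2\pi i \beta_j} e_j$. Term by term,
$$\frac{1}{N}\sum_{n=0}^{N-1} f_1(T^n x) g(S^n y) e^{in\theta} = \sum_j \alpha_j e_j(x) \cdot \frac{1}{N}\sum_{n=0}^{N-1} g(S^n y) e^{in(2\pi \beta_j + \theta)}.$$
The classical Wiener-Wintner Theorem (Theorem \ref{WW}) applied to $(Y, \mathcal{G}, \nu, S)$ and $g$ produces a single $\nu$-full-measure set on which the inner averages converge for every real phase, hence simultaneously for every $j$ and every $\theta$. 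The tail of the $j$-series is controlled by a maximal-function argument combined with Bessel's inequality, giving a.e. convergence for $f_1$ that is in fact uniform in $\theta$.

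For $f_2 \in \mathcal{K}^{\perp}$ the goal is the stronger assertion
$$\lim_N \sup_{\theta}\left|\frac{1}{N}\sum_{n=1}^N f_2(T^n x) g(S^n y) e^{in\theta}\right| = 0 \quad \text{for a.e. } (x,y).$$
Apply the Van der Corput inequality to $v_n(\theta) = f_2(T^n x) g(S^n y) e^{in\theta}$. Since $v_n(\theta)\overline{v_{n+h}(\theta)} = F_h(T^n x) G_h(S^n y) e^{-ih\theta}$ with $F_h(x) = f_2(x)\overline{f_2(T^h x)}$ and $G_h(y) = g(y)\overline{g(S^h y)}$, the $\theta$-dependence is a unimodular factor independent of $n$, and taking absolute values inside the Van der Corput bound eliminates $\theta$ altogether:
$$\sup_{\theta}\left|\frac{1}{N}\sum_{n=1}^N v_n(\theta)\right|^2 \leq \frac{C}{H+1} + \frac{2}{H+1}\sum_{h=1}^H \left|\frac{1}{N}\sum_{n=1}^N F_h(T^n x) G_h(S^n y)\right|.$$
For each fixed $h$, Bourgain's Return Times Theorem (Theorem \ref{RTT}) applied to $F_h \in L^{\infty}(\mu)$ and $G_h \in L^{\infty}(\nu)$ guarantees the existence of a limit $L_h(x,y)$ of the inner average on a set of full measure; a countable intersection over $h \in \mathbb{N}$ yields a single full-measure set of $(x,y)$ on which all the $L_h$ exist.

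The main obstacle is to show $\frac{1}{H}\sum_{h=1}^H |L_h(x,y)| \to 0$ as $H \to \infty$ for a.e. $(x,y)$, and this is where the hypothesis $f_2 \in \mathcal{K}^{\perp}$ must really be used. The spectral measure $\sigma_{f_2}$ is continuous, so Wiener's lemma gives
$$\frac{1}{H}\sum_{h=1}^H \left|\hat{\sigma}_{f_2}(h)\right|^2 = \frac{1}{H}\sum_{h=1}^H \left|\int F_h\,d\mu\right|^2 \longrightarrow 0.$$
The natural way to transfer this decay to the joint limits is through the joinings framework underlying Rudolph's proof of Theorem \ref{RTT}: identify $L_h(x,y)$ with an integral $\int F_h \otimes G_h\, dJ_{x,y}$ against a shift-invariant joining of the two systems, decompose $J_{x,y}$ relative to the common Kronecker factor, and use orthogonality of $f_2$ to the eigenfunctions of $T$ to force the density average to vanish. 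A more self-contained alternative is to iterate Van der Corput on the sequence $F_h(T^n x) G_h(S^n y)$ in $h$, reducing the question to a fourfold correlation controlled by a Gowers $U^2$-type seminorm of $f_2$, which vanishes on $\mathcal{K}^{\perp}$. Once $\frac{1}{H}\sum_h |L_h(x,y)| \to 0$ is in hand, letting first $N \to \infty$ and then $H \to \infty$ in the Van der Corput estimate gives uniform convergence of the averages to zero in $\theta$, which combined with the Kronecker case and one final application of the Banach principle yields Theorem \ref{ALR}.
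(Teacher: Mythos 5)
There is a genuine gap, and it is precisely the point the paper flags: the characteristic factor for the averages $\frac{1}{N}\sum_n f(T^nx)g(S^ny)e^{in\theta}$ is not the Kronecker factor but the Conze--Lesigne factor, and your decomposition is therefore at the wrong level. Your claimed intermediate result --- that $\sup_{\theta}\bigl|\frac{1}{N}\sum_{n=1}^N f_2(T^nx)g(S^ny)e^{in\theta}\bigr|\to 0$ a.e.\ whenever $f_2\in\mathcal{K}^{\perp}$ --- is false. Take $X=\mathbb{T}^2$ with $T(u,v)=(u+\alpha,v+u)$, $f_2(u,v)=e^{2\pi iv}$ (which is orthogonal to the Kronecker factor, i.e.\ to the $u$-coordinate algebra), and let $(Y,S)=(X,T)$ with $g=\overline{f_2}$. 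The quadratic phases cancel in the product $f_2(T^nx)g(S^ny)$, leaving a pure linear phase $e^{2\pi in(u-u')}$ times a constant, so the supremum over $\theta$ of the averages is identically $1$. The mechanism of the failure is visible inside your own Van der Corput step: $F_h=f_2\cdot\overline{f_2\circ T^h}$ need not lie in $\mathcal{K}^{\perp}$ even when $f_2$ does (in the example $F_h$ is an eigenfunction), so the limits $L_h(x,y)$ of the return-times averages of $F_h$ against $G_h$ can have modulus $1$ for every $h$. Wiener's lemma controls only $\frac{1}{H}\sum_h|\int F_h\,d\mu|^2$, which is the projection of $L_h$ onto the product joining; it says nothing about the Kronecker (or higher) components of the joining $J_{x,y}$, and those are exactly what survive. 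Your ``self-contained alternative'' has the same defect: iterating Van der Corput in $h$ produces a third-order (not second-order) Gowers--Host--Kra seminorm, which vanishes only on $\mathcal{Z}_2^{\perp}=\mathcal{CL}^{\perp}$, not on $\mathcal{K}^{\perp}$.

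The correct decomposition is $f=\mathbb{E}(f\mid\mathcal{CL})+(f-\mathbb{E}(f\mid\mathcal{CL}))$, where $\mathcal{CL}$ is the Conze--Lesigne $\sigma$-algebra; the Van der Corput argument can be made to work on $\mathcal{CL}^{\perp}$, but the substantial part of the theorem --- entirely absent from your proposal --- is the treatment of functions measurable with respect to $\mathcal{CL}$, which requires the structure theory of that factor (quasi-eigenfunctions satisfying the Conze--Lesigne functional equation) and, in the Assani--Lesigne--Rudolph proof, a disintegration of measures reducing to regular systems. Your Kronecker-part computation and the observation that the weakly mixing case follows from the Return Times Theorem are fine as far as they go (for weakly mixing systems $\mathcal{CL}$ is trivial, which is why that case is easy), but the general case cannot be closed by the route you describe.
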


If $(X, \mathcal{F}, \mu, T)$ is weakly mixing, it was shown in \cite{AssaniUWW} that \textbf{Theorem \ref{ALR}} follows as a simple consequence of Bourgain's Return Times Theorem (\textbf{Theorem \ref{RTT}}).  The general proof of \textbf{Theorem \ref{ALR}} was obtained by E. Lesigne, D. Rudolph and I. Assani \cite{ALR}.  Their argument uses disintegration of measures.  As such they reduce the proof to the case of a regular measure-preserving system.  The $\sigma$-algebra used to characterize convergence (in the same role as the Kronecker factor played above) was exactly the one created by J.-P. Conze and E. Lesigne \cite{CL-TEP, CL-SUT} while proving the convergence in $L^1$ norm of the multiple recurrence averages for totally ergodic systems
\begin{equation}
\frac{1}{N}\sum_{n=1}^N f_1\circ T^nf_2 \circ T^{2n}f_3 \circ T^{3n}.\label{CLAverage}
\end{equation}
This result showed a link between the study of the multiterm return times and Furstenberg nonconventional ergodic averages. Note that, at the time, for a general measure preserving system the norm convergence of the averages in equation (\ref{CLAverage}) was not established.  The connection between characteristic factors and return-time phenomena will be discussed in more detail in \textbf{Section \ref{Character}}.

The convergence of the multiterm return times averages was obtained in 1993 and sent for publication in early 1994 for all positive integers $H$ and $L^1$ i.i.d. random variables by I. Assani in \cite{AssaniSLF}.  This was the first multiterm return time theorem obtained for all positive integers $H$. In 1998, D. Rudolph \cite{RudolphFGS} proved the Multiterm Return Times Theorem for bounded functions or functions which satisfy H\"older's inequality, $(L^{p_i})$ where $1\leq i\leq K$ and $\sum_{i=1}^K \frac{1}{p_i} \leq 1$.

\begin{theorem}\label{MultRTT}
Let $k \in \mathbb{Z}^+$.  For any dynamical system $(X_0,\mathcal{F}_0, T_0, \mu_0)$ and any $f_0 \in L^{\infty}(\mu)$, there exists a set of full measure $X_{f_0}$ in $X_0$ such that if $x_0 \in X_{f_0}$ for any other dynamical system $(X_1,\mathcal{F}_1, T_1, \mu_1)$ and any $f_1 \in L^{\infty}(\mu_1)$ there exists a set of full measure $X_{f_1}$ in $X_1$ such that if $x_1 \in X_{f_1}$ then $\ldots$ for any other dynamical system $(X_{k-1},\mathcal{F}_{k-1}, T_{k-1}, \mu_{k-1})$ and any $f_{k-1} \in L^{\infty}(\mu_{k-1})$ there exists a set of full measure $X_{f_{k-1}}$ in $X_{k-1}$ such that if $x_{k-1} \in X_{f_{k-1}}$ for any other dynamical system $(X_{k},\mathcal{F}_{k}, T_{k}, \mu_{k})$ the average
$$\frac{1}{N}\sum_{n=1}^N f_1(T_1^{n}x)f_{2}(T_{2}^{n}x_{2})f_{3}(T_{3}^{n}x_{3})\cdots f_{k}(T_{k}^{n}x_{k})$$
converges $\mu_{k}$-a.e..
\end{theorem}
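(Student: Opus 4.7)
The plan is to proceed by induction on the number of terms $k$. The base case $k=1$ is Birkhoff's Pointwise Ergodic Theorem (Theorem \ref{Birk}) applied to $f_1$ on the full-measure set of generic points; the case $k=2$ is Bourgain's Return Times Theorem (Theorem \ref{RTT}). Before attempting the inductive step, I would reduce to ergodic systems via ergodic decomposition, reduce to regular measure-preserving systems, and use density of $L^\infty$ in $L^p$ together with the Banach principle to work with a convenient dense class of functions. The nesting of the full-measure sets $X_{f_0}\supset X_{f_1}\supset\cdots$ is automatic from the induction, since the inner averages are handled at each step using a set that depends only on the outer data.

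For the inductive step, assuming the $(k-1)$-term version for any systems, the strategy is to identify a characteristic factor $\mathcal{Z}_{k-1}(T_1)$ of the system $(X_1,T_1)$ that plays the role of the Kronecker factor in the BFKO proof of Theorem \ref{RTT}, and to split $f_1 = P_{\mathcal{Z}_{k-1}} f_1 + (f_1 - P_{\mathcal{Z}_{k-1}} f_1)$. The natural candidate is the Conze--Lesigne factor from \cite{CL-TEP,CL-SUT}, which already characterizes $L^2$ convergence for the averages in equation (\ref{CLAverage}) and which, for $k=2$, collapses to the Kronecker factor (matching Theorem \ref{ALR} and the BFKO framework). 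For $f_1 \in \mathcal{Z}_{k-1}^{\perp}$ I would prove a multiterm Wiener--Wintner style statement: for $\mu_1$-a.e.\ $x_1$, the weights $\{f_1(T_1^n x_1)\}$ drive the remaining $(k-1)$-term average to $0$ universally, i.e.\ independently of the choice of the remaining systems and functions. For $f_1 \in \mathcal{Z}_{k-1}$ one exploits the tower structure of the characteristic factor: values of $f_1$ along the $T_1$-orbit can be expressed (up to approximation) via nilsequence-like data, and the corresponding averages reduce, after Fubini and Wiener--Wintner type arguments, to $(k-1)$-term return times averages covered by the inductive hypothesis.

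Having established convergence for $f_1$ in and orthogonal to $\mathcal{Z}_{k-1}$, I would close the argument by the contradiction/ranges machinery of BFKO (or equivalently Rudolph's joining reformulation): assume the conclusion fails on a set of positive $\mu_k$-measure, apply Rohlin towers to produce a set $B_1$ as in (\ref{Rohlin}), and construct $J$ layers $(c_n^j(x_k))$ with the uniform boundedness, near-orthogonality, and lower-bound correlation properties listed after (\ref{Rohlin}) — the product-null characterization of $\mathcal{Z}_{k-1}^{\perp}$ terms providing the key near-orthogonality. The main obstacle I anticipate is precisely this orthogonality step in the $f_1 \in \mathcal{Z}_{k-1}^{\perp}$ case: unlike the classical Wiener--Wintner theorem (Theorem \ref{WW}), here one must control a supremum over the much richer parameter family encoding all possible $(k-1)$-term tail systems, which requires van der Corput reductions together with genericity/self-sampling properties on nested full-measure sets so that, after fixing $x_1$, the remaining data lies in the inductive hypothesis's framework without destroying the uniformity required for the supremum to vanish.
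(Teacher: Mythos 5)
Your strategy is genuinely different from the one the paper follows, and as it stands it has a real gap. Rudolph's proof of Theorem \ref{MultRTT} in \cite{RudolphFGS} deliberately avoids any decomposition with respect to higher-order factors: it transfers everything to shift-invariant measures on sequence spaces $M^{\mathbb{N}}$, introduces the notion of a \emph{fully generic} point ($\delta_{\vec{m}}\times\mu^{\mathbb{N}}$ pointwise generic for $\mu\times\mu^{\mathbb{N}}$), shows that almost every point is fully generic, and runs the induction through Theorem \ref{RT1}: if $\vec{m}_0$ is fully generic, the set of $\vec{m}_1$ with $(\vec{m}_0,\vec{m}_1)$ fully generic has universal full measure. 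The reason for this detour is exactly the point your induction glosses over: the property ``$(a_i)$ is a universal good weight'' does \emph{not} propagate --- Rudolph's example, built on Example \ref{OWx}, produces a universally good $(a_i)$ and an $f$ with $a_if(T^ix)$ not universally good --- so one must isolate a strictly stronger property that is both generic and inherited when a new coordinate is adjoined; ``fully generic'' is that property. Your proposal acknowledges that some uniformity must be preserved through the induction but never names the strengthened hypothesis that makes this work, and that is where the argument stalls.

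The characteristic-factor route you sketch is not unreasonable in hindsight, but both halves of the inductive step are themselves major theorems that are not supplied. For $f_1\in\mathcal{Z}_{k-1}^{\perp}$, the claim that the weights $f_1(T_1^nx_1)$ universally annihilate all $(k-1)$-term tails is precisely the content of Theorems \ref{A_k} and \ref{HKZ}; in \cite{AP2} these are proved \emph{using} Rudolph's Multiterm Return Times Theorem as an input, so invoking them here is circular unless you give an independent proof, and the seminorm/factor technology of \cite{HK-NEA} postdates Rudolph's theorem (the paper notes that even the norm convergence of the averages in equation (\ref{CLAverage}) was unavailable for general systems at the time). For $f_1$ measurable with respect to the factor, a nilsequence-type weight multiplying a $(k-1)$-term return times average is not itself a $(k-1)$-term return times average, so it is not ``covered by the inductive hypothesis''; handling it requires the Wiener--Wintner theorem for nilsequences (Theorem \ref{HKNew}) and its uniform versions \cite{HK-US,E12}, again substantial later work. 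Finally, the Conze--Lesigne factor is only the $\mathcal{Z}_2$ member of the hierarchy; for general $k$ you need the full tower $\mathcal{Z}_k$ or $\mathcal{A}_k$. A self-contained proof along the paper's lines should abandon the factor decomposition and develop the fully-generic machinery instead.
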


Note that putting this in the context initially discussed, this result states that for any $j < k$, the random sequences $f_1(T_1^ix_1)f_2(T_2^ix_2) \ldots f_j(T_j^ix_j)$ are universal good weights for a.e. convergence.

One key feature of this argument is that the author proves his result without having to create higher-order $\sigma$-algebras. By converting the question at hand to the setting of measures on $\M$ where $M$ is a compact metrizable space, he avoids the method of factor decomposition exploited above in the BFKO proof of the Bourgain's Return Times Theorem.  The goal of Rudolph's paper is to construct an inductive argument to deduce the Multiterm Return Times Theorem from Bourgain's Return Times Theorem for two terms.  If it were true that whenever $a_i$ was a universal good weight then the sequence $a_if(T^ix)$ was a universal good weight, then the induction would follow directly from Bourgain's Return Times Theorem.  This is not the case as is demonstrated in the example presented in \cite{RudolphFGS} which is described below.

\begin{example}
Construct a bounded sequence $a_i$ with the following two properties:
\begin{enumerate}
\item For any measure preserving system $(Y, \mathcal{G}, \nu, S)$ and $g \in L^{\infty}(\nu)$
$$\frac{1}{N} \sum_{i=1}^{\infty}a_ig(S^iy) \rightarrow 0$$
but
\item there is a process $(X, \mathcal{F}, \mu, T)$ and $f$ so that for $\mu$-a.e. $x$ the sequence $a_if(T^ix)$ will look like the sequence in (\textbf{Example \ref{OWx}}) \cite{OW} which was shown to be not a good universal weight.
\end{enumerate}
\end{example}

Therefore, Rudolph attacks the problem by looking at another property of sequences which he will define as fully generic from which the inductive argument can be made.  His proof involves four major steps.  First, the setting is transformed to the language of measures on a compact metrizable space and the definition of fully generic will be made.  Secondly, it is shown that for any shift invariant measure, $\mu$, $\mu$-a.e. sequence is fully generic.  Next, he proves the induction step through his Theorem 1 which states:

\begin{theorem}\label{RT1}
Suppose $\vec{m}_0 \in M_0^{\mathbb{N}}$ is fully generic.  For a second compact metrizable space $M_1$ consider
$$A_{\vec{m}_0} = \left\{\vec{m}_1 \in M_1^{\mathbb{N}} : (\vec{m}_0, \vec{m}_1) \in (M_0 \times M_1)^{\mathbb{N}} \textrm{ is fully generic} \right\}.$$
This is a set of universal full measure in $M_1^{\mathbb{N}}$.
\end{theorem}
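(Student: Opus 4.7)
The plan is to reduce \textbf{Theorem \ref{RT1}} to Bourgain's two-term Return Times Theorem (\textbf{Theorem \ref{RTT}}) via a joinings argument. The key insight is that full genericity of $\vec{m}_0$ should mean that $\vec{m}_0$ is generic for some shift-invariant measure $\mu_0$ on $M_0^{\mathbb{N}}$ and additionally satisfies the self-sampling/product-null conditions that made the BFKO argument go through at the base case; the job is to show these properties propagate to the pair $(\vec{m}_0,\vec{m}_1)$ for $\mu_1$-almost every $\vec{m}_1$, for every shift-invariant $\mu_1$ on $M_1^{\mathbb{N}}$.

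First I would apply ergodic decomposition to reduce to the case of a fixed ergodic shift-invariant measure $\mu_1$ on $M_1^{\mathbb{N}}$; since the intersection of countably many sets of universal full measure is again of universal full measure, I can treat each ergodic component separately and work only with $\vec{m}_1$ which are Birkhoff-generic for $\mu_1$. Next, I would fix a countable dense $\mathcal{D}_0 \subset C(M_0^{\mathbb{N}})$ and $\mathcal{D}_1 \subset C(M_1^{\mathbb{N}})$, and for each pair $(f,g) \in \mathcal{D}_0 \times \mathcal{D}_1$ apply Bourgain's Return Times Theorem to $(M_0^{\mathbb{N}},\mu_0,\sigma)$ and $(M_1^{\mathbb{N}},\mu_1,\sigma)$. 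Because $\vec{m}_0$ is fully generic (in particular, lies in the universal set $X_f$ of \textbf{Theorem \ref{RTT}} for every $f \in \mathcal{D}_0$), a single set $A_{\vec{m}_0,\mu_1}$ of full $\mu_1$-measure makes the averages
$$\frac{1}{N}\sum_{n=1}^{N} f(\sigma^n\vec{m}_0)\, g(\sigma^n\vec{m}_1)$$
converge simultaneously for all $(f,g) \in \mathcal{D}_0 \times \mathcal{D}_1$. A density/equicontinuity argument extends the convergence to all continuous $F \in C((M_0 \times M_1)^{\mathbb{N}})$, producing a shift-invariant joining $\hat{\mu}$ of $\mu_0$ and $\mu_1$ for which $(\vec{m}_0,\vec{m}_1)$ is generic.

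The substantive step is to upgrade genericity of the pair for $\hat{\mu}$ to full genericity. I expect Rudolph's definition of full genericity to require, beyond Birkhoff-genericity, that $\vec{m}_0$ together with the diagonal action behave correctly on products, i.e.\ that the pair is $(f,f)$-self-sampling in the sense of equation (\ref{ss}) and (\ref{crit2}), and that this persists under joinings with further factors. To verify this for $(\vec{m}_0,\vec{m}_1)$ I would apply \textbf{Theorem \ref{RTT}} a second time --- to the product system $(M_0^{\mathbb{N}} \times M_0^{\mathbb{N}}, \mu_0 \otimes_{\hat{\mu}_{0,0}} \mu_0, \sigma\times\sigma)$ paired with the system generated by $\vec{m}_1$ --- and use that $\vec{m}_0$ was already assumed to sample correctly against its own shifts. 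A Fubini manipulation on $\hat{\mu}$ then identifies the limit of the self-sampling averages of $(\vec{m}_0,\vec{m}_1)$ with an integral against $\hat{\mu}\times\hat{\mu}$, yielding the required higher-order behavior.

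The main obstacle I foresee is exactly this lifting of full genericity, not mere genericity, through the joining: Bourgain's theorem gives convergence of single-variable averages, but full genericity is a second-order condition about self-sampling, and the counterexample displayed in the excerpt shows that one cannot naively compose good universal weights with $f(T^n x)$. Overcoming this requires re-applying \textbf{Theorem \ref{RTT}} on the doubled product system and leveraging the full-genericity hypothesis on $\vec{m}_0$ as a hereditary property, rather than a one-shot Birkhoff condition. Once this propagation is established, the theorem --- and by induction on $k$ the Multiterm Return Times Theorem (\textbf{Theorem \ref{MultRTT}}) --- follows.
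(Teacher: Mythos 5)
The survey does not reproduce Rudolph's proof of this theorem; it explicitly defers to \cite{RudolphFGS} and only records the supporting definitions, so your proposal can only be measured against the structure it describes: reduce to measures on $\M$, show full genericity is a universally-full-measure property, prove the lifting first for weakly mixing measures and then for general shift-invariant measures, and only \emph{afterwards} show that fully generic sequences are universal good weights. Against that structure your plan has a genuine gap at its very first load-bearing step. You write that $\vec{m}_0$, being fully generic, ``in particular, lies in the universal set $X_f$ of Theorem \ref{RTT} for every $f \in \mathcal{D}_0$.'' Bourgain's theorem only produces an unspecified set of full measure; nothing in the definition of full genericity places $\vec{m}_0$ inside it. Showing that the explicit property ``fully generic'' implies the return-times conclusion for that specific point is precisely the content of the last stage of Rudolph's program (via the BFKO/LMM orthogonality criteria of equation (\ref{crit2})), and it is logically downstream of Theorem \ref{RT1}, not available as an input to it. As written, your argument either begs the question or leaves the hardest bridge unbuilt.

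The second gap is your reading of full genericity as ``a second-order condition about self-sampling.'' By the definition in the paper, $\vec{m}$ is fully generic for $\mu$ when $\delta_{\vec{m}} \times \mu^{\mathbb{N}}$ is pointwise generic for $\mu \times \mu^{\mathbb{N}}$: this is a condition against \emph{countably many} independent copies of $\mu$, strictly stronger than the Ornstein--Weiss self-sampling condition (\ref{ss}) or the Lesigne--Mauduit--Moss\'e condition (\ref{crit2}) with which you identify it. Your proposed upgrade --- one further application of Theorem \ref{RTT} to a doubled product system --- would at best deliver a pairwise statement about $(\vec{m}_0,\vec{m}_1)$ sampled against one more copy; it cannot establish pointwise genericity of $\delta_{(\vec{m}_0,\vec{m}_1)} \times \hat{\mu}^{\mathbb{N}}$ for $\hat{\mu} \times \hat{\mu}^{\mathbb{N}}$. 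The infinite product is exactly what makes the property hereditary under the induction, and it is the reason Rudolph's Example (the sequence $a_i$ with $a_i f(T^i x)$ behaving like Example \ref{OWx}) does not obstruct his argument; a merely second-order repair does not recover it. There is also a smaller unaddressed point: the joining $\hat{\mu}$ you construct need not be ergodic, whereas full genericity of the pair requires genericity for an ergodic measure (or a pointwise-generic decomposition into such), which must be argued rather than assumed.
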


Lastly, to connect this argument with the return times it will be shown that fully generic sequences of complex values create universal good weights.  His inductive argument is first shown for sequences which are generic for weakly-mixing measures and then lifted to an argument for general shift-invariant measures.  Rather than reproducing his argument here, we include some of the key terminology defined by Rudolph for transforming the return times argument to one about measures and joinings.

\subsection{Definitions}

We will begin by discussing how and why we can transfer the statement of \textbf{Theorem \ref{MultRTT}} to the context of measures on a compact metric space.  In this case we have bounded functions $f_1, \ldots, f_k$ each defined on different measure preserving systems $(X_i, \mathcal{F}_i, \mu_i, T_i)$.  Since the functions are bounded complex-valued functions, we can consider all of them as mapping onto some compact metrizable spaces $M_i$.  For each $f_i$ we consider the map $\Phi_i$ which maps from $X_i$ to $\M_i$ by $\Phi_i(x) = (f_i(x), f_i(T_ix), f_i(T_i^2x), f_i(T_i^3x), \ldots)$.  Thus each $\mu_i$ induces a measure, we'll call $\mu_i^*$ on $\M_i$ where
$\mu_i^*(A) = \mu_i(\Phi^{-1}(A))$.  We can then transfer each one of the systems $(X_i, \mathcal{F}_i, \mu_i, T_i)$ and functions $f_i$ to the system $(\M_i, \mathcal{A_i}, \mu_i^*, \sigma)$ where $\sigma$ is the left shift $\sigma(\vec{m}) = \sigma((m_0, m_1, m_2, \ldots)) = (m_1, m_2, m_3, \ldots)$.   So now each system is now viewed on a compact metrizable space under the same transformation.

If we study the concept of genericity in $\M_i$ for arbitrary shift-invariant measures on $\M_i$ and bounded functions defined on $\M_i$, we will be able to deduce \textbf{Theorem \ref{MultRTT}} in the form in which we are interested.  To see this consider the function $\pi$ defined on $\M_i$ by $\pi(\vec{m}) = \pi((m_0, m_1, m_2, \ldots)) = m_0$, then studying the genericity of a point $\vec{m} = (m_0, m_1, m_2, \ldots)$ with respect to the function $\pi$ means looking at averages of the form
$$\frac{1}{N} \sum_{n=1}^N \pi(\sigma^n (\vec{m})) = \frac{1}{N} \sum_{n=1}^N m_i.$$
So one specific choice for $\vec{m}$ would lead to the averages
$$\frac{1}{N} \sum_{n=1}^N f_i(T_i^nx).$$
Similarly studying the genericity of pairs of points $(\vec{m}_0,\vec{m}_1)$ in some space $(M_0 \times M_1)^{\mathbb{N}}$ leads to the very specific case of averages of the form
$$\frac{1}{N} \sum_{n=1}^N f_0(T_0^nx_0)f_1(T_1^nx_1).$$
Since Rudolph's characterization leads to an inductive argument he will be able to deduce the general result of \textbf{Theorem \ref{MultRTT}} from the case of genericity over two terms as stated in \textbf{Theorem \ref{RT1}}.  From this description it is clear that his result covers a much wider range than we are interested in for the Multiterm Return Times Theorem.  We are just interested in the measures of the form $\mu_i^*$ and points in $\M_i$ which come from mapping by $\phi_i$, but he proves his argument for general measures and points on $\M$.

First, we define some key concepts in terms of measures on a compact metrizable space.  Let $M$ be any compact metrizable space and $\vec{m} = (m_0, m_1, m_2, \ldots)$ be some arbitrary element of $\M$.  We will let $\sigma$ represent the left shift transformation as above.  Define $\mathcal{M}(M)$ as the set of measures on $\M$.  Then $\mathcal{M}_s(M)$ shall represent the shift-invariant measures on $\M$ and $\mathcal{M}_e(M)$ the ergodic measures on $\M$ for $\sigma$ which are the extreme points of $\mathcal{M}_s(M)$.  In this context, we express the genericity as follows.

\begin{defn}
We say $\nu \in \mathcal{M}(M)$ is \textbf{generic for $\mu \in \mathcal{M}_s(M)$} if
$$\frac{1}{N}\sum_{j=0}^{N-1} \sigma_*^j(\nu) \rightarrow \mu.$$
A point $\vec{m} \in M^{\mathbb{N}}$ is called \textbf{generic for a measure $\mu \in \mathcal{M}_s(M)$} if the point mass at $\vec{m}$, $\delta_{\vec{m}}$ is generic for $\mu$.
\end{defn}

Note that this is equivalent to the expressions for genericity discussed above (\textbf{Definitions \ref{FGen}} and \textbf{\ref{BirkGen})} because for any $\mu$-measurable $A \subset \M$ we have
\begin{eqnarray*}
\vec{m} \textrm{ is generic for } \mu & \iff & \delta_{\vec{m}} \textrm{ is generic for } \mu \\
& \iff & \frac{1}{N} \sum_{j=0}^{N-1} \sigma_*^j\left(\delta_{\vec{m}}\right) \rightarrow \mu \\
& \iff & \frac{1}{N} \sum_{j=0}^{N-1} \delta_{\vec{m}}\left(\sigma^{-j}(A)\right) \rightarrow \mu(A) \\
& \iff & \frac{1}{N} \#\left\{j \in [0,N-1] : \vec{m} \in \sigma^{-j}(A)\right\} \rightarrow \mu(A) \\
& \iff & \frac{1}{N} \sum_{j=0}^{N-1} \chi_A(\sigma^j (\vec{m})) \rightarrow \mu(A) = \int \chi_A d\mu
\end{eqnarray*}

\begin{defn}
For a measure $\nu$ to be \textbf{pointwise generic} for a shift-invariant measure $\mu$ requires that for $\nu$-a.e. $\vec{m}$, the point $\vec{m}$ is generic  for some ergodic measure $\mu(\vec{m})$ with the property
$$\int \mu(\vec{m}) d\nu = \mu.$$
\end{defn}

It follows from \textbf{Theorem \ref{Birk}} that any shift-invariant measure is pointwise generic with respect to itself.  This is because Birkhoff's Pointwise Ergodic Theorem implies that for any $\mu \in \mathcal{M}_s(M)$, then for $\mu$-a.e. $\vec{m} \in \M$ and any $\mu$-measurable $A \subset \M$ we have
$$\frac{1}{N} \sum_{j=0}^{N-1} \chi_A(\sigma^j(\vec{m})) \rightarrow \E\left(\chi_A|\textit{Inv}\right)(\vec{m})$$
and
$$\int \E\left(\chi_A|\textit{Inv}\right)(\vec{m}) d\mu = \int \chi_A d\mu = \mu(A).$$

\begin{defn}
We say $\vec{m}$ is \textbf{fully generic for $\mu \in \mathcal{M}_e(M)$} if $\delta_{\vec{m}}\times \mu^{\mathbb{N}}$ is pointwise generic for $\mu \times \mu^{\mathbb{N}}$.  A point $\vec{m} \in M^{\mathbb{N}}$ is a \textbf{fully generic point} if it is generic for some measure $\mu \in \mathcal{M}_e(M)$ and is fully generic for $\mu$.
\end{defn}

Thus for $\delta_{\vec{m}}\times \mu^{\mathbb{N}}$-a.e. sequences of points $(\vec{m}_0, \vec{m}_1, \vec{m}_2, \ldots)$, the sequence $(\vec{m}_0, \vec{m}_1, \vec{m}_2, \ldots)$ is generic for some ergodic measure $\mu((\vec{m}_0, \vec{m}_1, \vec{m}_2, \ldots))$ with the property that
$$\int \mu((\vec{m}_0, \vec{m}_1, \vec{m}_2, \ldots)) d(\delta_{\vec{m}} \times \mu^{\mathbb{N}}) = \mu \times \mu^{\mathbb{N}}$$
So for $\mu^{\mathbb{N}}$-a.e.  $(\vec{m}_1, \vec{m}_2, \ldots)$, the sequence $(\vec{m}, \vec{m}_1, \vec{m}_2, \ldots)$ is generic for some ergodic measure $\mu((\vec{m}_1,\vec{m}_2, \ldots))$ with the property that
$$\int \mu((\vec{m}_1, \vec{m}_2, \ldots)) d\mu^{\mathbb{N}} = \mu \times \mu^{\mathbb{N}}.$$
For every $\mu \times \mu^{\mathbb{N}}$-measurable set $\overline{A}$ and $\mu^{\mathbb{N}}$-a.e.  $(\vec{m}_1, \vec{m}_2, \ldots)$ we have
\begin{eqnarray*}
\left(\mu \times \mu^{\mathbb{N}}\right)(\overline{A}) & = & \int \chi_{\overline{A}} d\left(\mu \times \mu^{\mathbb{N}}\right) \\
& = &\int \int \chi_{\overline{A}} d\mu((\vec{m}_1, \vec{m}_2, \ldots)) d\mu^{\mathbb{N}} \\
& = & \int \lim \frac{1}{N} \sum_{j=0}^{N-1} \chi_{\overline{A}}((\sigma \times \sigma \times \sigma \times \cdots)^j\left((\vec{m}, \vec{m}_1, \vec{m}_2, \ldots)\right) d\mu^{\mathbb{N}} \\
& = & \int \lim \frac{1}{N} \sum_{j=0}^{N-1} \chi_{\overline{A}}\left(\sigma^j(\vec{m}), \sigma^j(\vec{m}_1), \sigma^j(\vec{m}_2), \ldots)\right) d\mu^{\mathbb{N}}
\end{eqnarray*}

Let $\mu \in \mathcal{M}_e(M)$.  If $A$ is a $\mu$-measurable set, then the set $\overline{A} = A \times \{\} \times \{\} \times \cdots$ is a $\mu \times \mu^{\mathbb{N}}$-measurable set.  If $\vec{m}$ is fully generic for $\mu$, then by the above analysis
\begin{eqnarray*}
\mu(A) & = & \left(\mu \times \mu^{\mathbb{N}}\right)(\overline{A}) \\
& = & \int \lim \frac{1}{N} \sum_{j=0}^{N-1} \chi_{\overline{A}}\left(\sigma^j(\vec{m}), \sigma^j(\vec{m}_1), \sigma^j(\vec{m}_2), \ldots)\right) d\mu^{\mathbb{N}} \\
& = & \int \lim \frac{1}{N} \sum_{j=0}^{N-1} \chi_A(\sigma^j (\vec{m})) d\mu^{\mathbb{N}} \\
& = & \lim \frac{1}{N} \sum_{j=0}^{N-1} \chi_A(\sigma^j (\vec{m}))
\end{eqnarray*}
Thus $\vec{m}$ is generic for $\mu$ as well.

\begin{defn}
A set $A \subseteq M^{\mathbb{N}}$ is of \textbf{universal full measure} if for all $\mu \in \mathcal{M}_s(M)$, $\mu (A) = 1$.  It is enough to check this just for the ergodic measures.
\end{defn}

The companion statement of \textbf{Theorem \ref{Birk}} in this new context is given below.

\begin{lemma}\label{Birk2}
The set of points $\vec{m} \in M^{\mathbb{N}}$ that are generic for an ergodic measure $\mu$ is a set of universal full measure.  That is to say, for any $\mu \in \mathcal{M}_e(M)$
$$\mu(\{\vec{m}:\vec{m} \textrm{ is generic for } \mu \}) = 1.$$
\end{lemma}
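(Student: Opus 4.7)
My plan is to derive Lemma \ref{Birk2} from the version of Birkhoff's theorem already recorded as Theorem \ref{Birk}, applied on the compact shift space $(\M, \mathcal{A}, \mu, \sigma)$, combined with the separability of $C(\M)$ and the ergodic decomposition. The key observation is that ``generic for $\mu$'' is the weak-$*$ convergence $\frac{1}{N}\sum_{j=0}^{N-1}\sigma_*^j(\delta_{\vec m}) \to \mu$, and a measure $\frac{1}{N}\sum_{j=0}^{N-1}\sigma_*^j(\delta_{\vec m})$ pairs against a continuous test function $\phi\in C(\M)$ to give $\frac{1}{N}\sum_{j=0}^{N-1}\phi(\sigma^j\vec m)$. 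So genericity of $\vec m$ for $\mu$ is equivalent to the statement that for every $\phi \in C(\M)$, the Birkhoff average $\frac{1}{N}\sum_{j=0}^{N-1}\phi(\sigma^j\vec m)$ converges to $\int \phi\,d\mu$.

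Fix an ergodic $\mu\in\mathcal{M}_e(M)$. First I would use that $M$ is compact metrizable, so $\M$ is also compact metrizable (product topology) and hence $C(\M)$ is separable. Choose a countable dense family $\{\phi_k\}_{k\geq 1}\subset C(\M)$. Since each $\phi_k$ is bounded and hence in $L^1(\mu)$, Theorem \ref{Birk} applied to the ergodic system $(\M,\mathcal{A},\mu,\sigma)$ gives a set $E_k$ with $\mu(E_k)=1$ such that for $\vec m\in E_k$,
\[
\frac{1}{N}\sum_{j=0}^{N-1}\phi_k(\sigma^j\vec m) \longrightarrow \int \phi_k\,d\mu.
\]
Set $E=\bigcap_{k} E_k$, still of full $\mu$-measure. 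A standard $3\varepsilon$ argument using density of $\{\phi_k\}$ in $C(\M)$ and the uniform bound $\bigl|\frac{1}{N}\sum_{j=0}^{N-1}(\phi-\phi_k)(\sigma^j\vec m)\bigr|\leq \|\phi-\phi_k\|_\infty$ then promotes the convergence from the countable family to every $\phi\in C(\M)$ for each $\vec m\in E$. Thus every point of $E$ is generic for $\mu$, proving $\mu(\{\vec m:\vec m\text{ is generic for }\mu\})=1$.

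To pass to universal full measure, let $A=\{\vec m:\vec m\text{ is generic for some ergodic measure}\}$. Each ergodic $\mu$ has $\mu(A)\geq \mu(\{\vec m:\vec m\text{ is generic for }\mu\})=1$ by what we just proved. For an arbitrary $\nu\in\mathcal{M}_s(M)$, the ergodic decomposition writes $\nu=\int \mu\,dP(\mu)$ with $P$ supported on $\mathcal{M}_e(M)$, and then $\nu(A)=\int \mu(A)\,dP(\mu)=1$. This is exactly the remark in the excerpt that it suffices to check ergodic measures.

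I do not expect a genuine obstacle: the whole argument is a packaging of Birkhoff's theorem plus separability of $C(\M)$ plus ergodic decomposition. The only point requiring a little care is the density step passing from the countable dense family of continuous functions to all of $C(\M)$, which needs the uniform boundedness of the Cesàro averages in sup-norm, and verifying that the exceptional sets $E_k$ can be intersected without losing measure because they are countably many. Both are routine.
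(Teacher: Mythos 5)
Your proposal is correct and matches the paper's intent: the paper offers no separate proof of Lemma \ref{Birk2}, presenting it simply as the ``companion statement'' of Birkhoff's theorem (Theorem \ref{Birk}) transported to the shift system on $\M$, and your argument (separability of $C(\M)$, Birkhoff applied to a countable dense family of continuous test functions, a $3\varepsilon$ density step, and ergodic decomposition to reduce universal full measure to the ergodic case) is exactly the standard filling-in of that claim. You also correctly resolve the one delicate point in the statement, namely that the set of points generic for a \emph{fixed} ergodic $\mu$ is only $\mu$-full, so the universal-full-measure assertion must be read for the union over all ergodic measures, as the paper's definition of universal full measure already anticipates.
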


Using the above definitions, we wish to establish the following result concerning the set of fully generic points.

\begin{theorem}
Let $A = \{\vec{m} \in M^{\mathbb{N}} : \vec{m} \textrm{ is a fully generic point } \}$.  Then the set $A$ has universal full measure.
\end{theorem}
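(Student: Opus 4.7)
The plan is to reduce to ergodic $\mu$ and then verify both clauses of the definition of ``fully generic point'' via Birkhoff plus a Fubini argument on an auxiliary product system. Since every $\mu \in \mathcal{M}_s(M)$ is a barycentric integral of its ergodic components, it suffices to prove $\mu(A)=1$ for each $\mu \in \mathcal{M}_e(M)$. Fix such an ergodic $\mu$ and form the product system $(\Omega,P,\Sigma)$ with $\Omega = \M \times (\M)^{\mathbb{N}}$, $P = \mu \times \mu^{\mathbb{N}}$, and $\Sigma$ the diagonal shift; then $P$ is $\Sigma$-invariant but not necessarily $\Sigma$-ergodic.

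I would invoke the ergodic decomposition theorem (a routine refinement of \textbf{Theorem \ref{Birk}} applied to a countable determining family of continuous functions on $\Omega$) to obtain a measurable set $G \subset \Omega$ with $P(G)=1$ such that each $x \in G$ is generic for some $\Sigma$-ergodic measure $P_x$ with $\int P_x\, dP(x) = P$. Fubini applied to $\chi_G$ then produces
$$A_2 = \{\vec{m}_0 \in \M : (\delta_{\vec{m}_0}\times \mu^{\mathbb{N}})(G)=1\},$$
with $\mu(A_2)=1$. Letting $A_1$ be the full $\mu$-measure set of points generic for $\mu$ furnished by \textbf{Lemma \ref{Birk2}}, I would claim every $\vec{m}_0 \in A_1 \cap A_2$ is a fully generic point, yielding $\mu(A)=1$ and hence universal full measure.

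For such $\vec{m}_0$, membership in $A_1$ gives genericity for the ergodic measure $\mu$, the first clause of ``fully generic.'' To verify that $\delta_{\vec{m}_0}\times\mu^{\mathbb{N}}$ is pointwise generic for $P$, clause (i) is immediate from $\vec{m}_0 \in A_2$, while clause (ii) reduces to showing $\int P_x\, d(\delta_{\vec{m}_0}\times\mu^{\mathbb{N}})(x) = P$. Testing against an arbitrary continuous $\phi$ on $\Omega$ and applying bounded convergence with $\int \phi\, dP_x = \lim_N \frac{1}{N}\sum_{j=0}^{N-1}\phi(\Sigma^j x)$ valid on $G$,
$$\int\!\!\int \phi\, dP_x\, d(\delta_{\vec{m}_0}\times\mu^{\mathbb{N}})(x) = \lim_N \frac{1}{N}\sum_{j=0}^{N-1} \int \phi\, d\bigl(\Sigma_*^j(\delta_{\vec{m}_0}\times\mu^{\mathbb{N}})\bigr),$$
and shift-invariance of $\mu^{\mathbb{N}}$ converts the right-hand side into $\lim_N \int \phi\, d\bigl((\frac{1}{N}\sum_{j=0}^{N-1}\delta_{\sigma^j\vec{m}_0})\times\mu^{\mathbb{N}}\bigr)$, which converges to $\int \phi\, dP$ because $\vec{m}_0 \in A_1$ is generic for $\mu$ and taking products with the fixed $\mu^{\mathbb{N}}$ preserves weak-$*$ convergence.

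The hard part will be this final computation: I must thread the ergodic-decomposition limit through a fiber integral against the (generally singular) measure $\delta_{\vec{m}_0}\times\mu^{\mathbb{N}}$ and then convert it, via $\mu^{\mathbb{N}}$-invariance under the shift, into a Ces\`aro limit of product measures that can be controlled by weak-$*$ continuity of the operation ``$\,\cdot\, \times \mu^{\mathbb{N}}$.'' Everything else is a clean packaging of Birkhoff's theorem and Fubini.
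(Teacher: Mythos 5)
Your proposal is correct and follows the same route as the paper: reduce to ergodic $\mu$, pass to the product system $(\M\times(\M)^{\mathbb{N}},\mu\times\mu^{\mathbb{N}})$, use the fact that a shift-invariant measure is pointwise generic for itself together with \textbf{Lemma \ref{Birk2}}, and intersect the two resulting full-measure sets. In fact your write-up is more complete than the paper's sketch at exactly the two delicate points: the paper passes from ``$(\vec{m},\vec{m}_1,\vec{m}_2,\ldots)\in B_{\mu}$'' directly to ``$\vec{m}\in A_{\mu}$'' without the Fubini argument showing that the fiber over $\vec{m}$ has full $\mu^{\mathbb{N}}$-measure (which is what your set $A_2$ provides), and it never verifies the barycenter condition $\int \mu(\vec{\vec{m}})\,d(\delta_{\vec{m}}\times\mu^{\mathbb{N}})=\mu\times\mu^{\mathbb{N}}$ required by the definition of pointwise genericity. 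Your final computation --- bounded convergence to exchange the ergodic-theorem limit with the fiber integral, $\Sigma_*^j(\delta_{\vec{m}_0}\times\mu^{\mathbb{N}})=\delta_{\sigma^j\vec{m}_0}\times\mu^{\mathbb{N}}$ by invariance of $\mu^{\mathbb{N}}$, and weak-$*$ continuity of $\nu\mapsto\nu\times\mu^{\mathbb{N}}$ (via Stone--Weierstrass on the compact metrizable product) --- is sound and supplies exactly the missing piece.
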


Let $\mu \in \mathcal{M}_e(M)$.  Let $A_{\mu} = \{\vec{m} \in M^{\mathbb{N}} : \vec{m} \textrm{ is a fully generic point for } \mu \}$.  Then
$$A = \bigcup_{\mu \in \mathcal{M}_e(M)} A_{\mu}.$$
To show that $A$ is of universal full measure, I need only show that $\mu(A_{\mu}) = 1$ for all $\mu \in \mathcal{M}_e(M)$.  As $\mu \times \mu^{\mathbb{N}}$ is a shift-invariant measure, it is pointwise generic with respect to itself.  Thus the set of all points in $\vec{\vec{m}} = (\vec{m}_0, \vec{m}_1, \vec{m}_2, \ldots)$ in $\M \times (\M)^{\mathbb{N}}$ which are generic for some ergodic measure $\mu(\vec{\vec{m}})$ has full measure with respect to $\mu \times \mu^{\mathbb{N}}$.  Call this set $B_{\mu,1}$.  By \textbf{Lemma \ref{Birk2}} the set of all points $\vec{m}_0$ such that $\vec{m}_0$ is generic for $\mu$ has full measure with respect to $\mu$.  Call this set $G_{\mu}$.  Define $B_{\mu,2} = G_{\mu} \times \M \times \M \times \M \times \cdots$.  Then $B_{\mu,2}$ has full measure with respect to $\mu \times \mu^{\mathbb{N}}$.  Let $B_{\mu} = B_{mu,1} \cap B_{\mu,2}$.  Then $B_{\mu}$ has full measure with respect to $\mu \times \mu^{\mathbb{N}}$.  If $(\vec{m}, \vec{m}_1, \vec{m}_2, \ldots) \in B_{mu}$, then $\vec{m} \in A_{\mu}$ and thus $\mu(A_{\mu}) = 1$.

\section{Characteristic Factors}\label{Character}

As you can see in the BFKO proof of the Return Times Theorem one of the keys to the argument was to break up the function using the Kronecker factor in order to prove the result independently for both the eigenfunctions and those functions in the orthocomplement of the Kronecker factor.  Using factors in proving convergence in ergodic theory has long been a very useful tool.  The notion of a characteristic factor is originally due to H. Furstenberg and is explicitly defined in \cite{FW}.

\begin{defn}
When the limiting behavior of a non-conventional ergodic average for $(X,\mathcal{F}, \mu, T)$ can be reduced to that of a factor system $(Y, \mathcal{G}, \nu, T)$, we shall say that the latter is a \textbf{characteristic factor} of the former.
\end{defn}

For each type of average under consideration, one will have to specify what is meant by reduced in the given case.  One approach to studying the convergence of nonconventional averages is to find the \textbf{minimal characteristic factor} which is the smallest factor which is characteristic for a given type of recurrence.  In the case of Furstenberg and Weiss \cite{FW}, they define the notion of characteristic factor when finding a characteristic factor for averages of the type
$$\frac{1}{N}\sum_{n=1}^N f \circ T^n g\circ T^{n^2}.$$
Therefore their specific definition of characteristic factor is as follows.

\begin{defn}
If $\{p_1(n), p_2(n), \ldots, p_k(n)\}$ are $k$ integer-valued sequences, and $(Y,\mathcal{G}, \nu, T)$ is a factor of a system $(X, \mathcal{F}, \mu, T)$, we say that $\mathcal{G}$ is a \textbf{characteristic factor for the scheme} $\{p_1(n), p_2(n), \ldots, p_k(n)\}$, if for any $f_1, f_2, \ldots, f_k \in L^{\infty}(\mu)$ we have
$$\frac{1}{N} \sum_{n=1}^N (f_1 \circ T^{p_1(n)})\cdots (f_k \circ T^{p_k(n)})-\frac{1}{N} \sum_{n=1}^N (\E(f_1 | \mathcal{G}) \circ T^{p_1(n)})\cdots (\E(f_k,\mathcal{G} \circ T^{p_k(n)})$$
converges to $0$ in $L^2(\mu)$.
\end{defn}

\subsection{Characteristic Factors and the Return Times Theorem}

The proof by D. Rudolph of the Multiterm Return Times Theorem (\textbf{Theorem \ref{MultRTT}}) gives an elegant proof of the theorem, but avoids the creation of the null set off of which the averages converge and the discovery of the factor which was characteristic for the return times averages.  Both the null set and the characteristic factor played a key role in the proofs of \textbf{Theorem \ref{RTT}} for two terms in \cite{RTT2} and \cite{RudolphAJP}, thus there is some interest in identifying what factors are characteristic for the Multiterm Return Times Theorem (\textbf{Theorem \ref{MultRTT}}).

In 2012, I. Assani and K. Presser published an update \cite{AP2} of their earlier unpublished work \cite{AP} on characteristic factors and the Multiterm Return Times Theorem.  In a reversal of the BFKO argument which used the properties of the characteristic factor to help prove the Return Times Theorem, Assani and Presser use the convergence of the multiterm return times averages guaranteed by Rudolph's Multiterm Return Times Theorem \cite{RudolphFGS} to demonstrate the convergence properties for two types of factors.

We first consider the factors used by H. Furstenberg to prove Szemer\'{e}di's Theorem \cite{FurstEBO}.  These factors are called $k$ step distal factors in \cite{FurstEBO}. We denote these factors as $\mathcal{A}_k$ using the notation from \cite{AssaniCFF} where these factors were shown to be $L^2$-characteristic for the averages
$$\frac{1}{N}\sum_{n=1}^N \prod_{i=1}^I f_i\circ T^{in}.$$

\begin{defn} Let $(X, \mathcal{F}, \mu, T)$ be an ergodic dynamical system on a probability measure space.  The factors $\mathcal{A}_k$ are defined in the following inductive way.
\begin{itemize}
\item The factor $\mathcal{A}_0$ is equal to the trivial $\sigma$-algebra $\{X,\emptyset\}$
\item For $k\geq 0$ the factor $\mathcal{A}_{k+1}$ is characterized by the following.  A function $f\in \mathcal{A}_{k+1}^{\perp}$ if and only if
$$N_{k+1}(f)^4 := \lim_H\frac{1}{H}\sum_{h=1}^H \left\|\mathbb{E}(f\cdot f\circ T^h | \mathcal{A}_{k})\right\|_2^2 = 0$$
\end{itemize}
\end{defn}

In the aforementioned paper, we prove that these seminorms are well-defined and characterize factors which are successive maximal isometric extensions.

\begin{theorem}\label{A_k}
Let $k$ be any positive integer.  For any ergodic dynamical system $(X,\mathcal{F}, \mu, T)$ and for each $f \in L^{\infty}(\mu)$ we can find a set of full measure $X_f$ such that for each $x \in X_f$, for any other dynamical system $(Y_1,\mathcal{G}_1, S_1, \nu_1)$ and any $g_1 \in L^{\infty}(\nu_1)$ with $\|g_1\|_{\infty} \leq 1$, there exists a set of full measure $Y_{g_1}$ such that for each $y_1$ in $Y_{g_1}$ then $\ldots$ for any other dynamical system $(Y_{k-1},\mathcal{G}_{k-1}, S_{k-1}, \nu_{k-1})$ and any $g_{k-1} \in L^{\infty}(\nu_{k-1})$ with $\|g_k\|_{\infty} \leq 1$ there exist a set of full measure $Y_{g_{k-1}}$ in $Y_{k-1}$ such that if $y_{k-1} \in Y_{g_{k-1}}$ for any other dynamical system $(Y_k,\mathcal{G}_k, S_k, \nu_k)$ for $\nu_k$-a.e. $y_k$
\begin{itemize}
    \item the average
        \begin{equation}\label{RTTEq}
            \frac{1}{N}\sum_{n=1}^N \left[f(T^{n}x)-\mathbb{E}(f | \mathcal{A}_k)(T^nx)\right]g_{1}(S_{1}^{n}y_{1})g_{2}(S_{2}^{n}y_{2})\cdots g_{k}(S_{k}^{n}y_{k})
        \end{equation}
        converges to 0.
    \item Thus for $f\in \mathcal{A}_k^{\perp}$ the average
$$\frac{1}{N}\sum_{n=1}^N f(T^{n}x)g_{1}(S_{1}^{n}y_{1})g_{2}(S_{2}^{n}y_{2})\cdots g_{k}(S_{k}^{n}y_{k})$$
converges to 0 $\nu_k$-a.e..
    \item Also we have the following pointwise upper bound for our limit
        \begin{equation}\label{Upbound}
            \limsup_N\left|\frac{1}{N}\sum_{n=1}^N f(T^nx)g_1(S_1^ny)g_{2}(S_{2}^{n}y_{2})\cdots g_{k}(S_{k}^{n}y_{k})\right|^2\leq CN_{k+1}(f)^2
        \end{equation}
\end{itemize}
\end{theorem}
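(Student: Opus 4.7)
My plan is to proceed by induction on $k$, combining the qualitative vanishing on $\mathcal{A}_k^\perp$ (first two bullets) with the quantitative $N_{k+1}$-bound (third bullet) into a single inductive claim. Existence of the pointwise limits is free from Rudolph's Multiterm Return Times Theorem~\ref{MultRTT}, applied separately to $f$ and to $\E(f|\mathcal{A}_k)$; so only the identification of the limit and the pointwise estimate need to be established.

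For the base case $k=1$ with $\mathcal{A}_1=\mathcal{K}$, the vanishing of $(1/N)\sum[f-\E(f|\mathcal{K})](T^nx)g_1(S_1^ny_1)$ is exactly Bourgain's Return Times Theorem~\ref{RTT} applied to $f-\E(f|\mathcal{K})\in\mathcal{K}^\perp$. For the $N_2$-bound, I would apply the van der Corput inequality to $a_n=f(T^nx)g_1(S_1^ny_1)$, producing with $F_h(x)=f(T^hx)\overline{f(x)}$ and $G_{1,h}(y_1)=g_1(S_1^hy_1)\overline{g_1(y_1)}$,
\begin{equation*}
\limsup_N\Bigl|\tfrac{1}{N}\sum_{n=1}^N a_n\Bigr|^2 \leq C\limsup_H\tfrac{1}{H}\sum_{h=1}^H\limsup_N\Bigl|\tfrac{1}{N}\sum_{n=1}^N F_h(T^nx)G_{1,h}(S_1^ny_1)\Bigr|.
\end{equation*}
Decompose $F_h$ along $\mathcal{K}\oplus\mathcal{K}^\perp$: the $\mathcal{K}^\perp$-part contributes zero by Theorem~\ref{RTT} applied to $F_h-\E(F_h|\mathcal{K})$, while the $\mathcal{K}$-part is bounded pointwise by $C'\|\E(F_h|\mathcal{K})\|_2$ via an eigenfunction expansion plus the Uniform Wiener--Wintner Theorem~\ref{UWW} and Cauchy--Schwarz. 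Averaging in $h$ and matching the definition $N_2(f)^4=\lim_H\tfrac{1}{H}\sum_h\|\E(F_h|\mathcal{K})\|_2^2$ closes the base case.

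The inductive step from level $k-1$ to level $k$ follows the same template. Van der Corput reduces $|a_n|^2$ with $a_n=f(T^nx)\prod_{i=1}^kg_i(S_i^ny_i)$, after a Cauchy--Schwarz step, to controlling the inner $k$-fold multiterm return times average with $F_h$ and $G_{i,h}$. I would decompose $F_h$ along $\mathcal{A}_k\oplus\mathcal{A}_k^\perp$: the $\mathcal{A}_k$-piece is dominated pointwise by $C\|\E(F_h|\mathcal{A}_k)\|_2$ using Rudolph's theorem (to guarantee the limit exists) together with a Cauchy--Schwarz/Birkhoff computation, and the $\mathcal{A}_k^\perp$-piece is shown to contribute zero by an extension of the multiterm Wiener--Wintner-type Theorem~\ref{ALR} to the inductive tower of maximal isometric extensions characterizing $\mathcal{A}_k$. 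Averaging in $h$ and invoking $N_{k+1}(f)^4=\lim_H\tfrac{1}{H}\sum_h\|\E(F_h|\mathcal{A}_k)\|_2^2$ yields the third bullet; the first two bullets then follow by applying the same decomposition to $f-\E(f|\mathcal{A}_k)$ and tracking the explicit cancellation of the conditional-expectation terms.

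The main obstacle is two-fold. First, at each induction level the van der Corput step produces, for each $h$ and each downstream system, a full-measure subset depending on these choices, and a density plus Banach principle argument is needed to extract a single universal null set independent of $h$ and uniform in the downstream systems. Second, the claim that the $\mathcal{A}_k^\perp$-piece of $F_h$ contributes nothing to the inner multiterm average is precisely the statement that $\mathcal{A}_k$ is genuinely characteristic at level $k$: this is a non-trivial refinement of the Kronecker/Wiener--Wintner base case and depends essentially on the inductive description of $\mathcal{A}_k$ as a tower of successive maximal isometric extensions. Handling this cleanly is the heart of the inductive step.
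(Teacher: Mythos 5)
Your overall architecture matches the route the survey attributes to \cite{AP2}: the existence of the limits comes for free from Rudolph's Theorem \ref{MultRTT}, the base case $k=1$ is Bourgain's theorem on $\mathcal{K}^{\perp}$ plus the uniform Wiener--Wintner estimate and an eigenfunction expansion, and the induction runs through van der Corput, with the correlations $F_h=f\circ T^h\cdot\overline{f}$ feeding the seminorm $N_{k+1}(f)^4=\lim_H\frac{1}{H}\sum_h\|\E(F_h|\mathcal{A}_k)\|_2^2$. Your base case is essentially right.

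The gap is in the inductive step, and it is not a technicality. Applying van der Corput to the full product $u_n=f(T^nx)\prod_{i=1}^kg_i(S_i^ny_i)$ leaves inner averages $\frac{1}{N}\sum_nF_h(T^nx)\prod_{i=1}^kG_{i,h}(S_i^ny_i)$ that still carry all $k$ auxiliary systems. Your plan then needs two facts about these $k$-term averages: that the $\mathcal{A}_k^{\perp}$-component of $F_h$ contributes zero, and that the $\mathcal{A}_k$-component is dominated pointwise by $C\|\E(F_h|\mathcal{A}_k)\|_2$. The first of these \emph{is} the second bullet of the theorem at level $k$ --- precisely what is being proved --- so the induction as set up is circular: the inductive hypothesis only controls $(k-1)$-term averages, and Rudolph's theorem cannot substitute since it gives existence of a limit but says nothing about its value or size. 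The escape, which is what the survey is pointing at when it stresses ``the control of the orbits made by the seminorms $N_{k+1}$'' together with the BFKO/Lesigne--Mauduit--Moss\'e mechanism of Theorem \ref{LMM}, is to strip the \emph{last} system rather than square the whole product: treat $a_n=f(T^nx)\prod_{i=1}^{k-1}g_i(S_i^ny_i)$ as the weight and $g_k(S_k^ny_k)$ as the target, so that universal a.e.\ convergence to $0$ in $y_k$ reduces to smallness of the autocorrelations $\frac{1}{N}\sum_na_{n+h}\overline{a_n}$, and these are $(k-1)$-term return times averages of $F_h$ to which the inductive hypothesis applies; the seminorm recursion then delivers $N_{k+1}(f)$. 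Relatedly, the quantitative bound (\ref{Upbound}) cannot by itself yield the first two bullets: a function $f\in\mathcal{A}_k^{\perp}$ satisfies $N_k(f)=0$ but in general $N_{k+1}(f)>0$, so ``combining the qualitative vanishing and the quantitative bound into a single inductive claim'' and then ``tracking the cancellation'' does not close; the characteristic-factor statement needs its own inductive argument along the lines just indicated.
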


An important aspect of the equation (\ref{Upbound}) is the control of the orbits made by the seminorms $N_{k+1}$.

The study of the nonconventional Furstenberg averages has seen important progress being made in the last seven years. In \cite{HK-NEA} and \cite{Ziegler} the Host-Kra-Ziegler factors $\mathcal{Z}_k$ were created independently by B. Host, B. Kra and T. Ziegler and were shown to be characteristic in $L^2$ norm for the Furstenberg averages.

\begin{defn}  Let $(X, \mathcal{F}, \mu, T)$ be an ergodic dynamical system on a probability measure space.  The factors $\mathcal{Z}_k$ are defined as follows.
\begin{itemize}
    \item The factor $\mathcal{Z}_0$ is equal to the trivial $\sigma$-algebra.
    \item The factor $\mathcal{Z}_1$ can be characterized by the seminorms $\||f|\|_2$  where
        $$\||f|\|_2^4 = \lim_{H}\frac{1}{H}\sum_{h=1}^{H} \left|\int f\cdot f\circ T^hd\mu\right|^2 $$
    \item The factor $\mathcal{Z}_2$ is the Conze-Lesigne factor, $\mathcal{CL}$.  Functions in this factor are characterized by the seminorm $|\|\cdot |\|_3$ such that
        $$\||f|\|_3^8 = \lim_{H}\frac{1}{H} \sum_{h=1}^{H}\||f\cdot f\circ T^h|\|_2^4.$$
        A function $f\in \mathcal{CL}^{\perp}$ if and only $\||f|\|_3 =0.$
    \item More generally B. Host and B. Kra showed in \cite{HK-NEA} that for each positive integer $k$ we have
        \begin{equation}\label{Semi}
            \||f|\|_{k+1}^{2^{k+1}}= \lim_H\frac{1}{H}\sum_{h=1}^{H}\||f\cdot f\circ T^h|\|_{k}^{2^k},
        \end{equation}
        with the condition that $f\in \mathcal{Z}_{k-1}^{\perp}$ if and only if $\||f|\|_{k}=0.$
\end{itemize}
\end{defn}

\begin{theorem}\label{HKZ}
Let $(X, \mathcal{F}, \mu, T)$ be an ergodic measure preserving system.  The Host-Kra-Ziegler factors $\mathcal{Z}_k$ are pointwise characteristic for the multiterm return times averages.
\end{theorem}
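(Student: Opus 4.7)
The plan is to deduce Theorem \ref{HKZ} from Theorem \ref{A_k} by identifying, in every ergodic system and for each $k\geq 1$, the Host--Kra--Ziegler factor $\mathcal{Z}_k$ with the Assani factor $\mathcal{A}_k$. Once this identification is in place, $\mathcal{Z}_k^{\perp} = \mathcal{A}_k^{\perp}$ and Theorem \ref{A_k} directly produces, on a universal set of full measure, the pointwise convergence to $0$ of the multiterm return times average for every $f\in \mathcal{Z}_k^{\perp}$, which is exactly the statement that $\mathcal{Z}_k$ is pointwise characteristic. The quantitative bound in equation (\ref{Upbound}) then transports to the Host--Kra--Ziegler seminorms.

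I would proceed by induction on $k$. The base case $k=1$ is immediate: since $\mathcal{A}_0$ is trivial, $\mathbb{E}(f\cdot f\circ T^h\mid \mathcal{A}_0) = \int f\cdot f\circ T^h\, d\mu$, so
$$N_1(f)^4 \;=\; \lim_H \frac{1}{H}\sum_{h=1}^H \Bigl|\int f\cdot f\circ T^h\, d\mu\Bigr|^2 \;=\; \||f|\|_2^4,$$
and hence $\mathcal{A}_1 = \mathcal{Z}_1$ is the Kronecker factor. For the inductive step, assume $\mathcal{A}_{k-1}=\mathcal{Z}_{k-1}$ and compare the recursive expressions
$$N_{k+1}(f)^4 = \lim_H \frac{1}{H}\sum_{h=1}^H \bigl\|\mathbb{E}(f\cdot f\circ T^h\mid \mathcal{A}_k)\bigr\|_2^2, \qquad \||f|\|_{k+1}^{2^{k+1}} = \lim_H \frac{1}{H}\sum_{h=1}^H \||f\cdot f\circ T^h|\|_k^{2^k}.$$
The induction reduces the task to showing that, after averaging in $h$, the conditional-expectation seminorm and the $k$-th Host--Kra seminorm of $g_h=f\cdot f\circ T^h$ vanish simultaneously; this gives $\mathcal{A}_k^{\perp}=\mathcal{Z}_k^{\perp}$.

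The main obstacle is precisely this inductive seminorm comparison. The inclusion $\mathcal{Z}_k\subseteq \mathcal{A}_k$ is the easier half: it reflects the minimality of $\mathcal{Z}_k$ among $L^2$-characteristic factors for the $(k+1)$-fold Furstenberg averages together with the fact, proved by Assani, that $\mathcal{A}_k$ is characteristic for those same averages. The reverse inclusion $\mathcal{A}_k\subseteq \mathcal{Z}_k$ is more delicate and would be obtained by iterated Van der Corput estimates along the Host--Kra tower, showing that $\||g|\|_k=0$ already forces $\|\mathbb{E}(g\mid \mathcal{A}_k)\|_2=0$ after averaging over $h$. Should the full identification $\mathcal{A}_k=\mathcal{Z}_k$ prove too delicate to push through cleanly, an acceptable fallback is the one-sided inequality $N_{k+1}(f)\leq C_k\,\||f|\|_{k+1}^{\alpha_k}$, which combined with the upper bound (\ref{Upbound}) of Theorem \ref{A_k} already implies that the multiterm return times average converges to $0$ pointwise on $\mathcal{Z}_k^{\perp}$, yielding Theorem \ref{HKZ}.
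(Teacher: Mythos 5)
Your reduction runs in the wrong logical direction, and the identification $\mathcal{A}_k=\mathcal{Z}_k$ on which it rests is false for $k\geq 2$. The Host--Kra--Ziegler factors are the nilfactors, and they are in general \emph{strictly} smaller than the Furstenberg-type factors $\mathcal{A}_k$ built from successive maximal isometric extensions; the survey states this explicitly right after the theorem: since $\mathcal{Z}_k\subseteq\mathcal{A}_k$ one has $\mathcal{A}_k^{\perp}\subseteq\mathcal{Z}_k^{\perp}$, so it is the $\mathcal{A}_k$ statement that is a \emph{consequence} of Theorem \ref{HKZ}, not the reverse. Your base case is fine ($\mathcal{A}_1=\mathcal{Z}_1$ is the Kronecker factor), but the induction already fails at $k=2$: take an isometric (group) extension of an irrational rotation by a cocycle that is not of Conze--Lesigne type. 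For such a system $\mathcal{A}_2$ is the full $\sigma$-algebra while $\mathcal{Z}_2=\mathcal{Z}_1$, so any nonzero $f$ orthogonal to the Kronecker factor lies in $\mathcal{Z}_2^{\perp}$ but certainly not in $\mathcal{A}_2^{\perp}$ (it is $\mathcal{A}_2$-measurable and nonzero). The same example destroys your fallback inequality $N_{k+1}(f)\leq C_k\||f|\|_{k+1}^{\alpha_k}$, since that inequality is equivalent to the false containment $\mathcal{A}_k\subseteq\mathcal{Z}_k$. The half you correctly call easy, $\mathcal{Z}_k\subseteq\mathcal{A}_k$ via minimality, is precisely the unhelpful half: it makes $\mathcal{Z}_k^{\perp}$ \emph{larger} than $\mathcal{A}_k^{\perp}$, so Theorem \ref{A_k} and the bound (\ref{Upbound}) cover only a proper subspace of the functions that Theorem \ref{HKZ} must handle.

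The missing content is therefore an estimate on the genuinely new part of $\mathcal{Z}_k^{\perp}$, namely functions with $\||f|\|_{k+1}=0$ but $N_{k+1}(f)>0$, for which (\ref{Upbound}) gives no information. The survey warns that no pointwise analogue of (\ref{Upbound}) in terms of the Host--Kra seminorms is available: in \cite{AP2} the needed upper bounds are obtained only after integration combined with a $\limsup$ argument (see also Example 5.1 of \cite{E12}). So the theorem cannot be obtained from Theorem \ref{A_k} by a seminorm comparison; one has to rerun the inductive return-times argument with the seminorms $\||\cdot|\|_{k+1}$ themselves and accept the weaker, integrated form of control.
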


As the $\mathcal{Z}_k$ factors are smaller than the factors $\mathcal{A}_k$, and thus $\mathcal{A}_k^{\perp} \subseteq \mathcal{Z}_k^{\perp}$, the fact that the $\mathcal{A}_k$ factors are pointwise characteristic for the multiterm return times averages is a consequence of \textbf{Theorem \ref{HKZ}}. But in our proof of \textbf{Theorem \ref{A_k}} using the seminorm defining the factors $\mathcal{A}_k$ we obtain pointwise uniform upper bounds of the multiterm return times averages. With the $\mathcal{Z}_k$ factors we do not have such pointwise estimates. The uniform upper bounds are derived after integration combined with a $\limsup$ argument See \cite{AP2} and Example 5.1 in T. Eisner and P. Zorin-Kranich \cite{E12}.

\section{Breaking the Duality}\label{Duality}

As mentioned above the original Return Times Theorem (\textbf{Theorem \ref{RTT}}) can be easily extended to $L^p(\mu)$ and $L^q(\nu)$ where $\frac{1}{p}+\frac{1}{q} \leq 1$ using the Banach Principle and H\"older's inequality.  What can be said about convergence of return times averages when $\frac{1}{p}+\frac{1}{q} > 1$?

A first motivation for this question comes from the following result due to I. Assani (1990, see page 141 in \cite{AssaniAWP}).

\begin{theorem}
Let $(X, \mathcal{F}, \mu, T)$ be a measure preserving system and $f\in L^1(\mu)$ then for $\mu$-a.e. $x$, for each measure preserving system $(Y, \mathcal{G}, \nu, S)$ and every $g\in L^1(\nu)$ the averages
$$\frac{1}{N}\sum_{n=1}^N f(T^nx) g\circ S^n$$
converge in $L^1(\nu)$ norm.
\end{theorem}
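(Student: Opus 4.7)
The plan is to leverage the Wiener--Wintner theorem (Theorem \ref{WW}) together with spectral theory, then to pass from $L^\infty$ to $L^1$ in both variables by density arguments that keep the exceptional null set in $X$ universal with respect to $(Y,\mathcal{G},\nu,S)$ and $g$.

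First I would treat $f\in L^\infty(\mu)$ and $g\in L^2(\nu)$. Wiener--Wintner produces a set $\tilde X\subset X$ of full $\mu$-measure, depending only on $f$, such that for every $x\in\tilde X$ and every $\theta\in\mathbb{R}$ the scalar averages $b_N(x,\theta)=\frac{1}{N}\sum_{n=1}^N f(T^nx)e^{2\pi in\theta}$ converge, and $|b_N(x,\theta)|\le\|f\|_\infty$. The positive-definite sequence $k\mapsto\int g\circ S^k\,\overline g\,d\nu$ (extended to $\mathbb{Z}$ by passing to the natural extension of $S$ if $S$ is not invertible) admits a Bochner--Herglotz representation $\int_{\mathbb{T}}e^{2\pi ik\theta}\,d\sigma_g(\theta)$, and a direct expansion of the inner product gives
\begin{equation*}
\left\|\tfrac{1}{N}\sum_{n=1}^N f(T^nx)g\circ S^n-\tfrac{1}{M}\sum_{n=1}^M f(T^nx)g\circ S^n\right\|_{L^2(\nu)}^2=\int_{\mathbb{T}}\bigl|b_N(x,\theta)-b_M(x,\theta)\bigr|^2\,d\sigma_g(\theta).
\end{equation*}
For $x\in\tilde X$ the integrand tends to $0$ pointwise in $\theta$ and is dominated by $4\|f\|_\infty^2$, so dominated convergence yields an $L^2(\nu)$-Cauchy sequence, and hence an $L^1(\nu)$-Cauchy sequence since $\nu$ is finite.

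Next I would extend in $g$: writing $U_N^f g=\frac{1}{N}\sum_{n=1}^N f(T^nx)g\circ S^n$, the bound
\begin{equation*}
\|U_N^f g\|_{L^1(\nu)}\le\Bigl(\tfrac{1}{N}\sum_{n=1}^N|f(T^nx)|\Bigr)\|g\|_{L^1(\nu)}
\end{equation*}
combined with Birkhoff's theorem applied to $|f|$ shows that for $\mu$-a.e.\ $x$, the operators $U_N^f$ are uniformly bounded on $L^1(\nu)$; since $L^2(\nu)$ is dense in $L^1(\nu)$ and convergence holds on the dense subspace, a standard three-epsilon argument extends $L^1(\nu)$-norm convergence to every $g\in L^1(\nu)$, with the same set of good $x$. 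Finally I would extend in $f$: choose $f_k\in L^\infty(\mu)$ with $\|f-f_k\|_{L^1(\mu)}\le 2^{-k}$ and let $V_kg\in L^1(\nu)$ denote the $L^1(\nu)$-limit of $U_N^{f_k}g$ on the full-measure set $E_k$ produced above. On $E=\bigcap_k E_k$ one has
\begin{equation*}
\|V_kg-V_lg\|_{L^1(\nu)}\le\mathbb{E}(|f_k-f_l|\mid\mathrm{Inv})(x)\,\|g\|_{L^1(\nu)},
\end{equation*}
and a Markov/Borel--Cantelli estimate applied to $\mathbb{E}(|f-f_k|\mid\mathrm{Inv})$, whose $L^1(\mu)$-norm is at most $2^{-k}$, gives a further full-measure set on which $\mathbb{E}(|f-f_k|\mid\mathrm{Inv})(x)\to 0$. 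Hence $\{V_kg\}$ is Cauchy in $L^1(\nu)$; its limit $Vg$ is the required $L^1(\nu)$-limit of $U_N^fg$, by splitting $\|U_N^f g-Vg\|_{L^1(\nu)}$ via the triangle inequality into an approximation error in $f$, a convergence error for $U_N^{f_k}$, and a Cauchy error for $V_k$.

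The main obstacle is the universality of the null set: the same $\mu$-exceptional set must work for every second system $(Y,\mathcal{G},\nu,S)$ and every $g\in L^1(\nu)$. The Wiener--Wintner plus spectral input forces the initial null set to depend only on $f$, the uniform boundedness of $U_N^f$ preserves universality across $g$, and the Borel--Cantelli decay of $\mathbb{E}(|f-f_k|\mid\mathrm{Inv})(x)$ transfers this universality through the $L^1$ approximation of $f$.
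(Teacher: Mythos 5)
Your proposal is correct and follows essentially the same route the paper indicates for this result: the paper states that the proof is "a consequence of the Wiener--Wintner Ergodic Theorem, the Spectral Theorem and the Maximal Ergodic Theorem," and your argument is precisely that combination --- Wiener--Wintner plus the Herglotz/spectral representation for the $L^\infty\times L^2$ case, and maximal-type bounds (via Birkhoff applied to $|f|$ and $|f-f_k|$) to run the density arguments in $g$ and then in $f$ while keeping the exceptional set dependent only on $f$. The details you supply (the $L^2(\nu)$ Cauchy identity against $d\sigma_g$, the uniform boundedness of $U_N^f$ on $L^1(\nu)$, and the Borel--Cantelli control of $\mathbb{E}(|f-f_k|\mid\mathrm{Inv})$) are all sound.
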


The proof is a consequence of the Wiener-Wintner Ergodic Theorem (\textbf{Theorem \ref{WW}}), the Spectral Theorem and the Maximal Ergodic Theorem \cite{AssaniWWE}.  In view of this result one could look for the true nature of the dynamic involved in the return times theorem allowing to go beyond the H\"{o}lderian duality.

A first evidence of the possible validity of the return times theorem beyond the H\"{o}lderian duality was shown in \cite{AssaniSLF} and \cite{AssaniAWP}.  In \cite{AssaniSLF}, I. Assani showed that if $(X, \mathcal{F}, \mu, T)$ is a measure preserving system and $f\in L^p$, $1<p\leq \infty$, then we can find a set $X_f$ of full measure such that for every sequence of independent, identically distributed (i.i.d) random variables $X_n$ in $L^1$ we have
$$\lim_n \frac{f(T^nx)X_n(\omega)}{n}=0$$
for a.e. $\omega.$  This leaves the case where $p=1$ which is addressed below in \textbf{Subsection \ref{L1L1}} on the $(L^1, L^1)$ case.
Then in \cite{AssaniAWP}, I. Assani showed that a sequence $\{X_n\}$ of i.i.d. random variables defined on the probability space $\Omega$ and having a finite $p$-th moment for some $1<p < \infty$ are universal good weights for a.e. convergence of the return times averages on $L^q$ where $1 < q < \infty$.  That is to say that the return times holds for the pair $(L^p, L^q)$ when $1< p,q < \infty$ when we restrict the initial term to i.i.d. random variables.

Subsequent work by C. Demeter in \cite{DemeterTBC} looked at breaking the duality in weighted ergodic averages of the form
$$\frac{1}{N}\sum_{n=1}^Na(k)f(T^kx)$$
where $a(k)$ is a sequence of complex numbers and $T$ is a linear operator of some $L^p$ space with $1 \leq p \leq \infty$ and $f \in L^p$.

Further progress along these lines can be seen in Theorem 1.6 of C. Demeter, M. Lacey, T. Tao and C. Thiele \cite{DLTT-BTD}, see also \cite{AssaniSLF} and \cite{BJLO}, which is stated below.

\begin{theorem}\label{DLTT1-6}
Assume that either $p > 1$ and $q = 1$, or $p = 1$ and $q > 1$. For each dynamical system
$(X,\mathcal{F}, \mu, T)$ and each $f \in L^p(\mu)$ there is a set $X^* \subseteq X$ of full measure, such that for each sequence of $L^p$ i.i.d. random variables $Y_n$ defined on the probability space $(Y,\mathcal{G},\nu)$ and each $x\in X^*$,
$$\lim_N\frac{1}{N} \sum_{n=1}^N f(T^nx)Y_n(y)$$
exists for $\nu$-a.e $y.$
\end{theorem}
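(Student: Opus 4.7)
The plan is to exploit the independence structure of the $Y_n$ to reduce the return times convergence to a weighted strong law of large numbers with deterministic weights $a_n(x) = f(T^n x)$, obtained by freezing $x$ at a generic point. First I would center the random variables: with $m = \E[Y_1]$ and $Z_n = Y_n - m$, decompose
$$\frac{1}{N}\sum_{n=1}^N f(T^n x)\, Y_n(y) = m \cdot \frac{1}{N}\sum_{n=1}^N f(T^n x) + \frac{1}{N}\sum_{n=1}^N f(T^n x)\, Z_n(y).$$
Since $\mu(X) < \infty$ we have $L^p(\mu) \subseteq L^1(\mu)$, so the first term converges $\mu$-a.e.\ by \textbf{Theorem \ref{Birk}} on a set $X_1$ of full measure that does not depend on the second system. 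It therefore suffices to show that, for $\mu$-a.e.\ $x$, the centered average $\tfrac{1}{N}\sum_{n=1}^N f(T^n x) Z_n(y) \to 0$ for $\nu$-a.e.\ $y$.

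Next I would restrict $x$ to the set of full $\mu$-measure where \textbf{Theorem \ref{Birk}} applies to $|f|^p$ (Case 1) or to $|f|$ (Case 2), so the deterministic weights $a_n := f(T^n x)$ satisfy $\tfrac{1}{N}\sum_{n=1}^N |a_n|^p = O(1)$. The problem becomes: for any deterministic $(a_n)$ with this bound and any centered i.i.d.\ sequence $(Z_n)$ in $L^q$, show $\tfrac{1}{N}\sum_{n=1}^N a_n Z_n \to 0$ almost surely. By a standard dyadic-subsequence-plus-maximal-function reduction, it is enough to prove convergence along $N = 2^k$ together with a maximal inequality controlling in-block fluctuation.

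For Case 1 ($p>1$, $q=1$) I would apply the classical truncation: write $Z_n = Z_n' + Z_n''$ with $Z_n' = Z_n \mathbf{1}_{|Z_n|\leq n} - \E[Z_n \mathbf{1}_{|Z_n|\leq n}]$. The tail contribution $\sum a_n Z_n''$ is handled by Borel--Cantelli, since $\sum_n P(|Z_n| > n) \leq \E|Z_1| < \infty$. The truncated part has variance at most $n\,\E|Z_1|$, so Kolmogorov's maximal inequality combined with H\"older applied against the $\ell^p$ bound on $(a_n)$ yields a summable tail estimate along dyadic $N$, finishing this case.

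The harder case is Case 2 ($p=1$, $q>1$): the ergodic control on the weights is only in $\ell^1$-average, while the $Z_n$ are merely $L^q$, so the natural moment inequalities for independent sums (von Bahr--Esseen for $1<q\leq 2$, Rosenthal for $q>2$) bound $\E|\sum a_n Z_n|^q$ by $\sum|a_n|^q\,\E|Z_1|^q$, with the wrong exponent on the weights. My plan to bridge this gap is a dyadic level-set decomposition of the weights,
$$a_n = \sum_{k \in \mathbb{Z}} a_n\, \mathbf{1}_{\{2^{k-1} \leq |a_n| < 2^k\}},$$
so that on each level the weights are essentially constant and the independent-sum moment inequality applies with a flat constant; the contributions are then summed over levels using $\sum_{n\le N}|a_n| = O(N)$ to offset the growth of $\sum|a_n|^q$, and a Borel--Cantelli argument along $N=2^k$ closes the estimate. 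The technical heart, and the step I expect to be the main obstacle, is arranging the truncation, level-set cutoffs, and moment exponent $q$ so that the gain from $Z_n \in L^q$ precisely compensates the asymmetry between the $\ell^1$ control on the weights coming from Birkhoff and the $\ell^q$ control demanded by the independent-sum inequalities.
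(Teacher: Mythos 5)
The survey does not actually prove this statement; it quotes it from \cite{DLTT-BTD} (see also \cite{AssaniSLF}, \cite{BJLO}), so there is no in-paper argument to compare against and your proposal has to be judged on its own. Your opening moves (center the $Y_n$, dispose of the mean term by \textbf{Theorem \ref{Birk}}, freeze $x$ and pass to deterministic weights $a_n=f(T^nx)$) are the standard and correct first step. The gap is that the statement you reduce to is false: it does \emph{not} suffice to know $\sup_N\frac1N\sum_{n\le N}|a_n|^p<\infty$. Take $p>1$, $q=1$, and $a_n=n^{1/p}$ when $n=2^k$, $a_n=0$ otherwise; the Ces\`aro $\ell^p$ bound holds, yet comparing the averages at $N=2^K$ and $N=2^K-1$ shows convergence would force $2^{-K/p'}Z_{2^K}\to 0$ a.s. (with $p'$ the H\"older conjugate), which by the second Borel--Cantelli lemma requires $\E|Z_1|^{p'}<\infty$; so any centered $Z_1\in L^1\setminus L^{p'}$ defeats the reduction. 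The analogous example with $p=1$ and $a_{2^k}=2^k$ kills Case 2 even for bounded $Z_n$. In other words, for weights known only to have bounded Ces\`aro $p$-means the H\"olderian duality is exactly necessary --- this is the content of the negative results of \cite{BJLO} and \cite{DJ} recalled in \textbf{Section \ref{Duality}} --- and no arrangement of truncations, level sets, and moment inequalities can prove a false statement.

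What your reduction discards, and what the theorem actually runs on, is that orbit weights at a generic point are much better than arbitrary sequences with bounded Ces\`aro $p$-means: applying \textbf{Theorem \ref{Birk}} to the countable family $|f|^p\chi_{\{|f|>\lambda\}}$, $\lambda\in\mathbb{Q}$, yields a single full-measure set $X^*$ on which
$$\lim_N\frac1N\sum_{n=1}^N|a_n|^p\chi_{\{|a_n|>\lambda\}}=\int_{\{|f|>\lambda\}}|f|^p\,d\mu\longrightarrow 0\quad(\lambda\to\infty),$$
i.e.\ uniform $p$-integrability of the empirical distributions of the weights. This is precisely what excludes the spiky sequences above, and it is the quantitative input your dyadic level-set decomposition needs in order to make the contributions of the high levels summable. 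A secondary flaw in Case 1: the bound $\mathrm{Var}(Z_n')\le n\,\E|Z_1|$ fed into Kolmogorov's inequality produces $\sum_n a_n^2/n$, which diverges already for $a_n\equiv 1$; one needs the standard interchange $\sum_n n^{-2}\E\bigl[Z_1^2\chi_{\{|Z_1|\le n\}}\bigr]\le C\,\E|Z_1|$, again coupled with level-set information on the weights rather than H\"older alone. With the uniform-integrability input restored, the architecture you describe (truncation, level sets, Borel--Cantelli along dyadic blocks) is essentially how the positive results of \cite{AssaniSLF} and \cite{BJLO} are obtained, so the repair is feasible but it sits at the conceptual heart of the theorem rather than being a technicality.
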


\begin{defn}
A sequence of complex numbers $\{a(k)\}$ is $p$-Besicovitch if $\{a(k)\}$ is in the closure of the trigonometric polynomials in the semi-norm
$$\left(\limsup_N\frac{1}{N} \sum_{k=1}^N|a(k)|^p\right)^{\frac{1}{p}}.$$
\end{defn}

In \cite{LOT}, M. Lin, J. Olsen and A. Templeman showed that if $a(k)$ is $q$-Besicovitch and $f \in L^p$ where $\frac{1}{p}+\frac{1}{q}=1$ then the above averages converge for all $f \in L^p$ when $T$ is a Dunford-Schwartz operator.  J. Baxter, R. Jones, M. Lin and J. Olsen \cite{BJLO} present a construction which demonstrates that the duality is necessary in the case that $q = 1$.  They show that if $\{X_n\}$ is a nonnegative i.i.d. sequence with $E(|X_1|) < \infty$ and $E(X_1) = 0$ which is not essentially bounded then there exists a sequence $\{a(n)\}$ of nonnegative numbers such that the sequence of its arithmetic means converges to zero and the sequence of weighted averages of the $X_n$ taken with the weights $a(n)$ is as bad as possible, i.e. the $\liminf$ heads to $0$ while the $\limsup$ heads to $\infty$ (see also \cite{CL09,LW}).  In \cite{DemeterTBC}, C. Demeter showed  that the duality is necessary for the $q$-Besicovitch sequences to make universal good weights when $q > 1$ as well.  This result of Demeter is simplified in the paper \cite{DJ} where C. Demeter and R. Jones describe a possible approach to dealing with duality with respect to the return times.

\subsection{Hilbert Transforms}

While the Return Times Theorem (\textbf{Theorem \ref{RTT}}) looks at the convergence of weighted Ces\'aro averages, these averages are closely related to the discrete ergodic Hilbert transform
$$\lim_n\sum_{k=-n}^n\frac{f(T^kx)g(S^ky)}{k}, k \neq 0$$
which is a tool of Harmonic Analysis which was first studied by M. Cotlar \cite{Cotlar}, see also A. P. Calderon \cite{Cald}.

The connection between the convergence of the Cesaro averages and the existence of the ergodic Hilbert transform has been established by R. Jajte \cite{Jajte} for $L^2$ functions.  For the one-sided ergodic Hilbert transform a simple partial summation argument shows that the convergence of the series $\sum_{n=1}^{\infty} \frac{a_n}{n}$ implies the convergence of the averages $\frac{1}{N}\sum_{n=1}^N a_n.$

\begin{conj}\label{HT}
Given any dynamical system $(X, \mathcal{F}, \mu, T)$ and $f \in L^p(\mu)$, there exists a set of full measure $X_f \in X$ such that for all $x \in X_f$ and for every other dynamical system $(Y, \mathcal{G}, \nu, S)$ and $g \in L^q(\nu)$, the limit
$$\lim_n\sum_{k=-n}^n\frac{f(T^kx)g(S^ky)}{k}, k \neq 0$$
exists for $\nu$-a.e. $y$.
\end{conj}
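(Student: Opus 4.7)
The plan is to mirror the two-part structure of the BFKO/Rudolph proof of the return times theorem, replacing each Cesàro estimate with its Hilbert-transform analogue. After a Banach-principle reduction (using density of $L^\infty$ in $L^p$ and a maximal inequality for the bilinear operator) one may assume $f$ and $g$ are bounded. Decompose $f$ relative to the Kronecker factor $\mathcal{K}$ of $T$ as $f=f_1+f_2$ with $f_1\in\mathcal{K}$ and $f_2\in\mathcal{K}^{\perp}$, and treat the two pieces independently.

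For the Kronecker piece, expand $f_1=\sum_j\alpha_j e_j$ in an orthonormal basis of eigenfunctions with eigenvalues $e^{2\pi i\theta_j}$, so that
$$\sum_{0<|k|\le n}\frac{f_1(T^kx)\,g(S^ky)}{k}\;=\;\sum_j\alpha_j e_j(x)\sum_{0<|k|\le n}\frac{e^{2\pi ik\theta_j}g(S^ky)}{k}.$$
Convergence then reduces to a uniform Wiener--Wintner type result for the ergodic Hilbert transform: for $\nu$-a.e.\ $y$, the partial sums $\sum_{0<|k|\le n}e^{2\pi ik\theta}g(S^ky)/k$ converge as $n\to\infty$, uniformly in $\theta$. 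Such statements (in the spirit of Campbell--Petersen, Jajte, and Assani) are available for $g\in L^q$ with $q>1$ via the spectral theorem and a Rademacher--Menshov argument; the endpoint $q=1$ would require an $L\log L$ maximal bound for the Hilbert transform.

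For $f_2\in\mathcal{K}^{\perp}$ one wants the Hilbert analogue of the BFKO orthocomplement estimate. The idea is to transfer the uniform Wiener--Wintner bound of \textbf{Theorem \ref{UWW}}, namely $\sup_\theta|\tfrac{1}{N}\sum_{k=1}^N f_2(T^kx)e^{2\pi ik\theta}|\to 0$ for $\mu$-a.e.\ $x$, into pointwise decay of the bilinear Hilbert sum. Concretely, combine a Calderón transference from $\mathbb{Z}$ to the orbit, a square-function/variational estimate for the Hilbert transform, and the self-sampling and product-null characterizations from the work of Rudolph and Lesigne--Mauduit--Mossé (\textbf{Theorem \ref{LMM}}) that control the correlations of $\{f_2(T^kx)\}$. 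Together these should give $\sum_{0<|k|\le n}f_2(T^kx)g(S^ky)/k\to 0$ for $\mu$-a.e.\ $x$ and $\nu$-a.e.\ $y$, at least in the Hölderian range $\tfrac{1}{p}+\tfrac{1}{q}\le 1$.

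The main obstacle is the breaking of duality when $\tfrac{1}{p}+\tfrac{1}{q}>1$. In that regime one cannot bound the bilinear sum by $\|f\|_p\|g\|_q$, and the maximal/variational inequality for the singular bilinear operator $(f,g)\mapsto\sup_n\bigl|\sum_{0<|k|\le n}f(T^kx)g(S^ky)/k\bigr|$ lies in the same territory as the bilinear Hilbert transform. A full proof of \textbf{Conjecture \ref{HT}} across the non-dual range would therefore require a Hilbert-transform analogue of \textbf{Theorem \ref{DLTT1-6}}, obtained by the kind of time--frequency analysis developed by Lacey--Thiele and Demeter--Lacey--Tao--Thiele; this is precisely where the conjecture appears to lie beyond current techniques and constitutes the heart of the difficulty.
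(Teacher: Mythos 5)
The statement you are addressing is labelled a \textbf{Conjecture} in the paper, and the paper gives no proof of it; what it records instead is that the conjecture is \emph{false} as stated for $p=1$. By the Lacey--Marcus construction (referenced via \cite{Tal}), there is an $L^1$ i.i.d.\ sequence $\{X_k\}$ such that $\sum_{0<|k|\le n} X_k(x)e^{2\pi i k\theta}/k$ fails to converge for some $\theta$, and this is precisely the case $p=1$, $q=\infty$ of \textbf{Conjecture \ref{HT}} with $S$ a rotation and $g(y)=e^{2\pi i y}$. That case lies inside the H\"olderian range $\frac1p+\frac1q\le 1$, so your claim that the BFKO-style Kronecker/orthocomplement decomposition ``should give'' convergence throughout that range is wrong at the endpoint. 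Your opening Banach-principle reduction is exactly where this bites: to pass from $L^\infty$ to $L^1$ in the first variable you need a maximal inequality for the bilinear operator that is uniform over the second system, and the Lacey--Marcus example shows no such inequality exists at $p=1$ even when the second system is a rotation and $g$ is bounded. The signed kernel $1/k$ is the culprit --- as the paper notes, partial summation transfers convergence from the Hilbert series to the Ces\`aro averages but not conversely, and the Hilbert transform of a nonnegative weight is not nonnegative, so none of the positivity used in the return-times arguments survives.

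Beyond that, what you have written is a programme rather than a proof. The only positive cases known (and recorded in the paper) are $1<p\le\infty$ and $q\ge 2$, obtained by Demeter--Lacey--Tao--Thiele as a corollary of the time-frequency estimate in \textbf{Theorem \ref{DLTT2}} together with a variational inequality (needed because the set of $g$ for which convergence holds is merely closed, with no obvious dense subclass); they are not obtained by transferring the Cesàro argument piece by piece as you propose. Your final paragraph concedes that the non-dual range lies beyond the sketched techniques, which is an admission that the statement has not been established. Since the conjecture is false at $p=1$, true for $1<p\le\infty$, $q\ge2$ by deep harmonic-analysis methods, and open in the remaining range $\frac1p+\frac1q<2$, no complete proof of the statement as written can exist, and the correct treatment is the paper's own: state it as a conjecture and record the counterexamples and partial results.
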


As referenced by \cite{Tal}, Lacey and Marcus have shown that the conjecture is false for $p=1$ using the family of rotations on the torus as the initial weight and applying that to $g(y) = e^{2\pi i y}$.  They actually proved the existence of an $L^1$ sequence $\{X_k\}$ of i.i.d. random variables such that for almost every $x$
$$\lim_n\sum_{k=-n}^n \frac{X_k(x)e^{2 \pi i k\theta}}{k}, k \neq 0$$
fails to exists for some $\theta$.  Similarly, Talagrand showed the same negative result holds for the one-sided version of the series in the course of proving the following theorem.

\begin{theorem}
Given an i.i.d. symmetric sequence of random variables $\{X_k\}$, the random Fourier series
$$\sum_{k=1}^{\infty} \frac{X_ke^{2 \pi i k \theta}}{k}$$
converges uniformly for a.e. $\theta$ if and only if $\{X_k\}$ is in $L \log \log L$.
\end{theorem}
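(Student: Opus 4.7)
The plan is to reduce the problem to a question about Rademacher (or Gaussian) trigonometric series and then apply sharp entropy bounds. Because the $X_k$ are symmetric, we may write $X_k = \epsilon_k |X_k|$ where $(\epsilon_k)$ are i.i.d. Rademacher variables independent of $(|X_k|)$. Conditioning on $(|X_k|)$ reduces the problem to understanding when the random trigonometric series $\sum_k \epsilon_k |X_k| e^{2\pi i k \theta}/k$ converges uniformly in $\theta$ almost surely. By Kahane's contraction inequality this is equivalent, up to universal constants, to the analogous question for the Gaussian series $\sum_k g_k |X_k| e^{2\pi i k \theta}/k$ where $(g_k)$ are standard normals independent of everything.

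For the sufficient direction, assume $E(|X_1|\log\log|X_1|) < \infty$. I would decompose the coefficients into dyadic blocks in both $k$ and the magnitude of $|X_k|$, namely $|X_k|\mathbf{1}_{\{2^\ell \leq |X_k| < 2^{\ell+1}\}}$ for $k\in[2^j,2^{j+1})$, and on each resulting piece apply the Fernique--Marcus--Pisier entropy bound (or Dudley's majorizing integral) for the sup-norm of a Gaussian trigonometric polynomial with the given coefficients. A Borel--Cantelli argument then shows that, almost surely, the partial sums form a uniformly Cauchy sequence, provided the $L\log\log L$ moment is finite. The $\log\log$ gain is precisely the logarithmic saving obtained by combining the tail estimate of the Gaussian supremum with the summability of $1/k^2$ across a dyadic block.

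For the necessary direction, argue by contradiction: if $E(|X_1|\log\log|X_1|) = \infty$ then Borel--Cantelli produces infinitely many dyadic blocks $[2^j,2^{j+1})$ on which some $|X_k|$ is anomalously large. A sharp lower bound of Marcus--Pisier type for the sup-norm of the corresponding Rademacher trigonometric polynomial conditioned on $(|X_k|)$ then provides a divergent lower bound for the uniform norm of the block sums, contradicting uniform convergence.

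The main obstacle is matching $L\log\log L$ exactly, rather than a slightly coarser condition such as $L\log L$. A naive metric-entropy argument loses a logarithmic factor; what is required is Talagrand's sharp generic-chaining (majorizing measure) bound for the Gaussian process $\theta \mapsto \sum_k g_k |X_k| e^{2\pi i k \theta}/k$, combined with a delicate balancing of the contribution from each magnitude scale of $|X_k|$. Making the upper and lower entropy estimates agree, up to constants, on every dyadic block after the truncation is the heart of the argument and is what pins down the precise moment condition.
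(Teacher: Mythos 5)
This statement is not proved in the paper: it is Talagrand's theorem from \cite{Tal} (``A borderline random Fourier series''), quoted in the survey only to explain why the Hilbert-transform approach to breaking the duality fails at $p=1$. So there is no in-paper argument to compare against; your proposal has to be judged as a proof of Talagrand's result on its own. Your opening reduction is sound and does match the standard (and Talagrand's) starting point: writing $X_k=\epsilon_k|X_k|$, conditioning on $(|X_k|)$, and invoking the Marcus--Pisier theory to pass between the Rademacher and Gaussian Fourier series with coefficients $|X_k|/k$. Note, though, that since the conditional process is a stationary random Fourier series on the circle, the Fernique/Marcus--Pisier entropy integral is already \emph{necessary and sufficient} there; the real work is not sharpening Dudley's bound by generic chaining, as your last paragraph suggests, but rather the probabilistic analysis of the entropy functional of the random coefficient sequence $(|X_k|/k)$, i.e.\ showing it is a.s.\ finite exactly when $E(|X_1|\log\log|X_1|)<\infty$. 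That computation is the theorem, and your sketch does not contain it.

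Concretely, both directions have gaps. For sufficiency, the claim that ``the $\log\log$ gain is precisely the logarithmic saving obtained by combining the tail estimate of the Gaussian supremum with the summability of $1/k^2$ across a dyadic block'' is asserted, not derived, and you then concede in the final paragraph that the naive block-by-block entropy estimate loses a logarithm --- which is an admission that the decisive estimate is missing. For necessity, the proposed mechanism fails as written: if $E|X_1|<\infty$ then $\sum_k P(|X_k|>\epsilon k)<\infty$, so by Borel--Cantelli $|X_k|/k\to 0$ a.s.; hence ``some $|X_k|$ anomalously large on infinitely many dyadic blocks'' produces individual terms whose sup-norms tend to zero and cannot by itself give a divergent lower bound for the block sums. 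The divergence must come from the \emph{collective} contribution of many moderately large coefficients within a block, quantified through a Sudakov-type minoration of the conditional entropy functional, and it is exactly this two-sided control that pins down $L\log\log L$ against, say, $L\log L$ or $L$. Until that balancing is carried out, the proposal is a plausible research plan in the spirit of Talagrand's argument, not a proof.
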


However, Cuzick and Lai \cite{CuL} proved that for each $L^p$ sequence $\{X_k\}$ of i.i.d. random variables where $p > 1$ there exists a set $X^* \in X$ with the property that
$$\lim_n\sum_{k=-n}^n \frac{X_k(x)e^{2 \pi i k\theta}}{k}$$
exists for each $x \in X^*$ and $\theta \in \mathbb{T}$.

To study duality and the weighted convergence with respect to the Hilbert transform, it has become a customary strategy following in the models as above to look at the role that i.i.d. sequences play in the convergence of such averages.  In \cite{AssaniDAT}, the first author proved the following about duality and the Hilbert transform.

\begin{theorem}
Let $(X, \mathcal{F}, \mu, T)$ be a dynamical system and $f \in L^1(\mu)$.  There is a set $X_f \in X$ of full measure such that for each sequence $\{Y_k\}$ of i.i.d. random variables defined on the probability space $(Y, \mathcal{G}, \nu)$, with $Y_1 \in L^q(X), q > 1$ and each $x \in X_f$
$$\lim_n \sum_{k=-n}^n \frac{f(T^kx)Y_k(y)}{k}, k \neq 0$$
exists for $\nu$-a.e. $y$.  The result fails when $q = 1$, for some $f$ in every ergodic dynamical system $(X, \mathcal{F}, \mu,T)$.
\end{theorem}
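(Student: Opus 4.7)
The plan is to split the argument into the positive assertion ($q>1$) and the $q=1$ counterexample. Since $(Y,\mathcal{G},\nu)$ is a probability space, $L^q(\nu)\subset L^{q'}(\nu)$ for $q\geq q'$, so we may assume $q\in(1,2]$. Writing $Y_k=c+\tilde Y_k$ with $c=\E Y_1$ and $\tilde Y_k$ i.i.d.\ mean-zero in $L^q(\nu)$, the constant part contributes $c\sum_{|k|\leq n,\,k\neq 0}f(T^kx)/k$, whose $\mu$-a.e.\ convergence is Cotlar's classical two-sided ergodic Hilbert transform theorem for $f\in L^1(\mu)$. It remains to produce a single set $X_f\subset X$ of full $\mu$-measure such that, for every $x\in X_f$ and every i.i.d.\ mean-zero $L^q$ sequence $\tilde Y_k$, the random series $\sum_{|k|\leq n,\,k\neq 0}f(T^kx)\tilde Y_k(y)/k$ converges $\nu$-a.e. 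For $f\in L^\infty(\mu)$ this is nearly immediate: the Marcinkiewicz-Zygmund inequality gives
$$\Big\|\sum_{N\leq|k|\leq M}\frac{f(T^kx)\tilde Y_k}{k}\Big\|_{L^q(\nu)}^q\ \lesssim\ \|f\|_\infty^q\,\E|\tilde Y_1|^q\sum_{N\leq|k|\leq M}\frac{1}{|k|^q},$$
whose right-hand side tends to $0$ as $N\to\infty$ since $q>1$; combined with Doob's maximal inequality this yields $\nu$-a.e.\ convergence of the partial sums.

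To pass from $f\in L^\infty(\mu)$ to general $f\in L^1(\mu)$, I invoke the Banach principle: approximating $f$ by $f_j\in L^\infty(\mu)$ with $\|f-f_j\|_1\to 0$ reduces matters to a maximal inequality on $(X\times Y,\mu\times\nu)$ for the operator
$$H^*f(x,y)=\sup_n\Big|\sum_{|k|\leq n,\,k\neq 0}\frac{f(T^kx)Y_k(y)}{k}\Big|,$$
of weak-type form such as $(\mu\times\nu)\{H^*f>\lambda\}\leq C\,\|Y_1\|_q^q\,\|f\|_1/\lambda$. Producing this estimate, which couples Cotlar's weak-type $(1,1)$ bound for the ergodic Hilbert transform in the $x$-variable with a Kolmogorov-Doob maximal estimate in the $y$-variable via the independence of the $Y_k$, is the main obstacle of the proof; once it is in place, the standard density argument yields the desired convergence for every $f\in L^1(\mu)$ on a common full-measure set $X_f$.

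For the $q=1$ negative direction, I build on the Lacey-Marcus counterexample: there exist an $L^1$ i.i.d.\ sequence $\{Z_k\}$ on some $(\Omega,\mathcal{P})$ and $\theta_0\in\mathbb{T}$ such that the random Fourier series $\sum_{|k|\leq n,\,k\neq 0}Z_k(\omega)e^{2\pi ik\theta_0}/k$ fails to converge on a set of positive $\mathcal{P}$-measure. Given any ergodic $(X,\mathcal{F},\mu,T)$, I use the Rokhlin-Halmos lemma to construct, on towers of increasing height, a function $f\in L^1(\mu)$ whose orbit values $f(T^kx)$ approximate $e^{2\pi ik\theta_0}$ on arbitrarily long initial segments for $\mu$-a.e.\ $x$. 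Taking $(Y,\mathcal{G},\nu)=(\Omega,\mathcal{P})$ and $Y_k=Z_k$, the Lacey-Marcus divergence transfers to the return-times Hilbert transform, producing a pair $(f,\{Y_k\})$ for which $\sum_{|k|\leq n,\,k\neq 0}f(T^kx)Y_k(y)/k$ diverges on a set of positive $\nu$-measure for $\mu$-a.e.\ $x$.
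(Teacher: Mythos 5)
Your positive half ($q>1$) is sound for bounded $f$, but it stalls exactly where the theorem lives. The product-space weak-type inequality for $H^*$ that you defer as ``the main obstacle'' is not a routine step you can wave at: as stated it is not even homogeneous in $f$ and $Y_1$, and, more importantly, running the Banach principle on $(X\times Y,\mu\times\nu)$ produces an exceptional null set that depends on the second space and on the sequence $\{Y_k\}$, whereas the theorem demands a single $X_f$ fixed \emph{before} $(Y,\mathcal{G},\nu)$ and $\{Y_k\}$ are chosen. The way to get that universality is to put all the work into a deterministic lemma in the $x$-variable alone: for $f\in L^1(\mu)$ and $q>1$, $\sum_{k\neq 0}|f(T^kx)|^q/|k|^q<\infty$ for $\mu$-a.e.\ $x$. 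This follows from a layer-cake decomposition of the sum together with the counting estimate $\int \#\{k:|f|(T^kx)>\lambda k\}\,d\mu\leq \|f\|_1/\lambda$, which makes $\int_0^1\lambda^{q-1}\#\{k:|f|(T^kx)>\lambda k\}\,d\lambda$ integrable precisely because $q>1$ (this is the $p$-series/counting-function mechanism of \cite{AssaniPSeries}, \cite{ABM-ALC}). Once $X_f$ is defined by this condition (intersected with the set where Cotlar's ergodic Hilbert transform of $f$ converges, to handle the mean of $Y_1$), your own von Bahr--Esseen/Doob argument applies verbatim for each fixed $x\in X_f$ and \emph{every} i.i.d.\ mean-zero $L^q$ sequence, with no maximal inequality in $f$ needed.

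The $q=1$ half does not work as designed. Lacey--Marcus is a statement about the failure of convergence \emph{uniformly in} $\theta$: for a.e.\ $\omega$ there is \emph{some} $\theta(\omega)$ where the series diverges. It does not provide a fixed $\theta_0$ at which divergence occurs on a set of positive $\mathcal{P}$-measure, and in fact no such $\theta_0$ can exist for a symmetric $L^1$ i.i.d.\ sequence: for fixed $\theta_0$ the terms $Z_ke^{2\pi ik\theta_0}/k$ are independent and symmetric, and the Kolmogorov three-series test (using $\sum_k\mathcal{P}(|Z_1|>k)\leq\E|Z_1|$ and $\sum_k k^{-2}\E[Z_1^2\chi_{\{|Z_1|\leq k\}}]\lesssim\E|Z_1|$) gives a.s.\ convergence. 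The same computation shows that for \emph{any bounded} $f$ and any symmetric i.i.d.\ $L^1$ sequence the series $\sum_k f(T^kx)Y_k(y)/k$ converges $\nu$-a.s., so your Rokhlin-tower $f$, being an approximate unimodular eigenfunction and hence in $L^\infty$, can never witness the failure. The counterexample must use an \emph{unbounded} $f\in L^1$: one arranges (as in Theorem \ref{FailRTT}) that $\sup_n N_n(f)(x)/n=\infty$ a.e., chooses a heavy-tailed $Y_1\in L^1$ matched to the counting function so that $\sum_k\nu\left(|Y_1|>k/|f|(T^kx)\right)=\infty$, and concludes by the second Borel--Cantelli lemma that the individual terms $f(T^kx)Y_k(y)/k$ fail to tend to $0$ $\nu$-a.s., so the series diverges.
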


A companion result was proven in \cite{AssaniDAT} this time with the i.i.d. sequence playing the role of the weight in the average.

\begin{theorem}
Let $\{X_k\}$ be a mean $0$ sequence of i.i.d. random variables defined on the probability space $(X, \mathcal{F}, \mu)$ which are assumed to be in $L^p$ for some $p$ with $1 < p \leq \infty$.  Then there exists a subset $X* \in X$ of full measure such that for each $x \in X*$, the following holds: for any dynamical system $(Y, \mathcal{G}, \nu, S)$ and $g \in L^r$ with $1 < r \leq \infty$
$$\lim_n \sum_{k=-n}^n \frac{X_k(x)g(S^ky)}{k}, k \neq 0$$
exists for $\nu$-a.e. $y$.
\end{theorem}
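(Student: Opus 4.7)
The plan is to follow the strategy of the companion theorem stated immediately above: combine the Cuzick--Lai ``Wiener--Wintner''-type result for random Hilbert kernels with the spectral theorem and a maximal inequality, and conclude via the Banach principle. Here the i.i.d. sequence $X_k(x)$ plays the role of the weights and $g$ is the observable on $(Y,\mathcal{G},\nu,S)$.

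First I would apply the Cuzick--Lai theorem cited in the excerpt, strengthened by Talagrand's uniform convergence result (since $L^p\subset L\log\log L$ for $p>1$, and one can pass to Talagrand's symmetric setting by symmetrizing the mean-zero i.i.d. sequence). This produces a set $X^*\subset X$ of full $\mu$-measure such that for every $x\in X^*$ the partial sums
$$\psi_n(x,\theta):=\sum_{0<|k|\le n}\frac{X_k(x)}{k}\,e^{2\pi ik\theta}$$
converge uniformly in $\theta\in\mathbb{T}$ to a continuous limit $\psi(x,\cdot)$; in particular $M(x):=\sup_n\|\psi_n(x,\cdot)\|_{L^\infty(\mathbb{T})}<\infty$. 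Now fix $x\in X^*$ and set $T_ng(y):=\sum_{0<|k|\le n}X_k(x)g(S^ky)/k$. For $g\in L^2(\nu)$, the spectral theorem for the unitary $S$ yields
$$\|T_ng-T_mg\|_{L^2(\nu)}^2=\int_{\mathbb{T}}|\psi_n(x,\theta)-\psi_m(x,\theta)|^2\,d\sigma_g(\theta),$$
where $\sigma_g$ is the scalar spectral measure of $g$. The uniform Cauchiness of $\psi_n(x,\cdot)$ forces $T_ng$ to be $L^2(\nu)$-Cauchy and hence convergent in $L^2(\nu)$.

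To upgrade $L^2$-convergence to $\nu$-a.e. convergence and to reach general $g\in L^r(\nu)$ with $1<r\le\infty$, I would invoke the Banach principle. A dense subclass with $\nu$-a.e. convergence is provided by finite linear combinations of $S$-eigenfunctions: if $Sg=e^{2\pi i\alpha}g$ then $T_ng(y)=\psi_n(x,\alpha)g(y)\to\psi(x,\alpha)g(y)$ pointwise by the uniform convergence of $\psi_n(x,\cdot)$; and for $g$ in the orthocomplement of the Kronecker factor the spectral measure $\sigma_g$ is continuous, so one approximates $g$ in $L^r$ by functions with smooth spectral density and uses the continuity of $\psi(x,\cdot)$ together with the $L^2$ estimate.

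The critical remaining ingredient is a maximal inequality
$$\bigl\|\sup_n|T_ng|\bigr\|_{L^r(\nu)}\le C(x,r)\,\|g\|_{L^r(\nu)},\qquad 1<r<\infty.$$
By Calder\'on's transfer principle, this reduces to the analogous $\ell^r(\mathbb{Z})$ bound for the maximal Hilbert-transform-type operator with random kernel $(X_k(x)/k)_{k\ne 0}$. For $r=2$ it is immediate from the uniform symbol bound $M(x)$ and Plancherel, but for $r\ne 2$ the mere $L^\infty(\mathbb{T})$-boundedness of the symbols $\psi_n(x,\cdot)$ does not suffice: one needs Marcinkiewicz- or H\"ormander--Mihlin-type multiplier control, which I would expect to extract by combining the detailed random Fourier series machinery underlying Cuzick--Lai with the classical Calder\'on--Zygmund theory for the Hilbert transform. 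This maximal inequality is the main technical obstacle. The restriction $r>1$ is sharp, consistently with the Lacey--Marcus failure at $r=1$ recalled just before the theorem.
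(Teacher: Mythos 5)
The survey does not actually reproduce a proof of this theorem --- it is quoted from \cite{AssaniDAT} --- but the template it does write out in detail, namely the proof of the one-sided ergodic Hilbert transform result for Wiener--Wintner functions of power type, shows what the intended argument looks like and where yours diverges from it. The first half of your proposal is sound and matches that template: pass through the random Fourier series $\psi_n(x,\theta)=\sum_{0<|k|\le n}X_k(x)e^{2\pi ik\theta}/k$, control it in $\theta$ on a universal set $X^*$ of full $\mu$-measure via \cite{CuL} and \cite{Tal} (after symmetrizing the mean-zero sequence), and use the spectral theorem to turn a $\sup_\theta$ bound into an $L^2(\nu)$ bound independent of $(Y,\mathcal{G},\nu,S)$. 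That correctly yields $L^2(\nu)$-norm convergence of $T_ng$.

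The gap is in the upgrade to $\nu$-a.e.\ convergence. Your proposed dense class, finite linear combinations of $S$-eigenfunctions, is not dense in $L^r(\nu)$ for a general system: when $S$ is weakly mixing the only eigenfunctions are the constants, so this class is one-dimensional, and your fallback for $g\in\mathcal{K}^{\perp}$ (approximate by functions with smooth spectral density and use the $L^2$ estimate) again only produces norm convergence; the Banach principle therefore has no dense class of a.e.\ convergence to start from. The maximal inequality you then need for $r\ne 2$ is, as you concede, not established, and the route you sketch cannot supply it: the symbols $\psi_n(x,\cdot)$ satisfy only a uniform $L^\infty(\mathbb{T})$ bound, while their derivatives $2\pi i\sum_{0<|k|\le n}X_k(x)e^{2\pi ik\theta}$ have $L^2(\mathbb{T})$-norm of order $\sqrt{n}$, so no Marcinkiewicz or H\"ormander--Mihlin condition holds uniformly in $n$ and Calder\'on--Zygmund multiplier theory gives nothing beyond $r=2$. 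The way around this is quantitative rather than maximal, exactly as in the survey's Wiener--Wintner-function proof: establish a power-type rate $\big\|\sup_\theta|\psi_n(x,\cdot)-\psi(x,\cdot)|\big\|_{L^1(\mu)}\le CN^{-\alpha}$ (this is where $p>1$ and the mean-zero hypothesis enter, through Salem--Zygmund-type moment estimates), choose $\beta$ with $\beta\alpha>1$ so that the $L^2(\nu)$ errors along $n=N^{\beta}$ are summable and the subsequence converges $\nu$-a.e.\ outright, and then fill the gaps $N^{\beta}\le n<(N+1)^{\beta}$ using the fact, already known from \cite{AssaniAWP}, that $\{|X_k(x)|\}$ is $\mu$-a.e.\ a good universal weight for the return times averages on $L^r$, $1<r\le\infty$. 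No maximal inequality in $g$ is needed, and the restriction $r>1$ enters through this last step, consistently with the Lacey--Marcus failure at $r=1$ that you correctly cite.
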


Cuny \cite{Cuny} showed that this last result could be extended to the case where $X_1 \in L \log L$ and $g\in L \log L$.  Demeter \cite{DemeterRWS} showed that for the series
$$\lim_n\sum_{k=-n}^n \frac{X_k(x)Y_k(y)}{k}, k \neq 0$$
one can obtain a.e. convergence if both $X_n$ and $Y_n$ are i.i.d. sequences in $L^1.$

In \cite{DLTT-BTD}, C. Demeter, M. Lacey, T. Tao and C. Thiele showed that \textbf{Conjecture \ref{HT}} is true if $1 <p \leq \infty$ and  $q \geq 2$.  One of their main results is the following theorem.

\begin{theorem}\label{DLTT2}
Let $K: \mathbb{R} \rightarrow \mathbb{R}$ be an $L^2$-kernel satisfying the following requirements:
\begin{equation}
\widehat{K} \in C^{\infty}(\mathbb{R} \setminus \{0\}),
\end{equation}
\begin{equation}
|\widehat{K}(\xi)| \lesssim \min\left\{1,\frac{1}{|\xi|}\right\}, \forall \xi \neq 0,
\end{equation}
\begin{equation}
\left|\frac{d^n}{d\xi^n} \widehat{K}(\xi)\right| \lesssim \frac{1}{|\xi|^n}\min\left\{|\xi|,\frac{1}{|\xi|}\right\}, \forall \xi \neq 0, n \geq 1.
\end{equation}
Then the following inequality holds for each $1 < p \leq \infty$.  There exists a finite constant $C_p$ such that
$$\left\|\sup_{\|g\|_{L^2(\mathbb{R})}=1}\left\|\sup_{k \in \mathbb{Z}}\left|\frac{1}{2^k} \int f(x+y)g(z+y)K\left(\frac{y}{2^k}\right)dy\right| \right\|_{L_z^2(\mathbb{R})}\right\|_{L_x^p(\mathbb{R})} \leq C_p \|f\|_{L^p(\mathbb{R})}$$
where $$\|F\|_{L^2_z}(\mathbb{R}) = \bigg(\int_{-\infty}^{\infty} |F(z)|^2 dz\bigg)^{1/2}.$$
\end{theorem}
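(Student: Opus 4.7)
The plan is to recast the inner bilinear integral as a bilinear Fourier multiplier and apply time--frequency analysis in the spirit of Lacey--Thiele and Carleson. For Schwartz $f, g$ (with the general case by density), Fourier inversion gives
\[B_k(f,g)(x,z):=\frac{1}{2^k}\int f(x+y)g(z+y)K(y/2^k)\,dy=\iint e^{2\pi i(x\xi+z\eta)}\,\widehat{K}(2^k(\xi+\eta))\,\widehat{f}(\xi)\widehat{g}(\eta)\,d\xi\,d\eta,\]
so $B_k$ is a bilinear Fourier multiplier with symbol $m_k(\xi,\eta)=\widehat{K}(2^k(\xi+\eta))$ depending only on $\xi+\eta$ and concentrated, as $k$ varies, near the degenerate line $\xi+\eta=0$. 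The hypotheses on $\widehat{K}$ are precisely the Mikhlin-type symbol conditions (with improved vanishing at $0$ and extra decay at $\infty$) placing this multiplier in the class treated by the bilinear Hilbert transform / Carleson machinery.

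First I would linearize the $\sup_k$ by a measurable scale selector $k=k(x,z)$, reducing matters to a uniform bound on
\[T(f,g)(x,z)=\iint e^{2\pi i(x\xi+z\eta)}\,\widehat{K}\bigl(2^{k(x,z)}(\xi+\eta)\bigr)\,\widehat{f}(\xi)\widehat{g}(\eta)\,d\xi\,d\eta.\]
Next, apply a Littlewood--Paley decomposition $\widehat{K}=\sum_{j\in\mathbb{Z}}\widehat{K}^{(j)}$ with $\widehat{K}^{(j)}$ supported where $|\tau|\sim 2^j$; the derivative hypothesis produces a factor $\min(2^j,2^{-j})$ on each piece, so it is enough to obtain a bound uniform in $j$ for each piece and then sum a geometric series. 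For a single piece, up to a fixed dilation the multiplier is localized near $|\xi+\eta|\sim 1$; I would discretize it into standard time--frequency tiles $P=I_P\times\omega_P$ with $|I_P|\cdot|\omega_P|=1$ and organize them into trees adapted to the selector $k(x,z)$.

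The central analytic step is a vector-valued single-tree estimate: each tree's contribution must be bounded in the mixed norm $L^p_x(L^2_z)$ after the outer $\sup_{\|g\|_{L^2}=1}$. I would handle that outer supremum by a $TT^\ast$ argument in the $g$ variable, which replaces it with an operator-norm bound from $L^2_g$ into $L^2_z$ for each fixed $x$; the resulting linear object is a Carleson-type maximal operator along the scales of the tree, to which the size/energy estimates of bilinear Hilbert transform theory apply. Summing over trees via a forest decomposition of Carleson/BMO type then yields the $p=\infty$ case directly. For $1<p<\infty$ I would complement this with a weak-type bound in $x$, obtained by a Calder\'on--Zygmund decomposition of $f$ and exploitation of the mean-zero property of the bad atoms against the smoothness and decay of $\widehat{K}^{(j)}$, and then invoke Marcinkiewicz interpolation between $L^1_x$-weak and $L^\infty_x$ to recover the full range.

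The hard part will be the single-tree estimate, which must simultaneously be \emph{vector-valued in $z$}, \emph{maximal in the scale $k$}, and \emph{uniform in the Littlewood--Paley index $j$}. Encoding the Carleson-type $\sup_k$ correctly in the Lacey--Thiele tile combinatorics while preserving the vector-valued control required by the outer $\sup_{\|g\|_{L^2}=1}$ is the heart of the argument: the tile geometry must be chosen so that each tree's time interval and frequency interval remain compatible with $(x,z)$ and with the $L^2_z$ fiber direction, and the resulting vector-valued trees must still obey the standard size/energy inequalities with constants that do not blow up in $j$.
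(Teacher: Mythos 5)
A preliminary remark: the survey does not actually prove this theorem. It is quoted from Demeter, Lacey, Tao and Thiele \cite{DLTT-BTD}, and the survey only records that their proof adapts the Lacey--Thiele time--frequency machinery from the Carleson--Hunt theorem and the bilinear Hilbert transform. Your outline is in that same spirit, so strategically you are pointed the right way, but two of your concrete steps fail as written. The most serious is the Littlewood--Paley summation. You claim that each piece $\widehat{K}^{(j)}$ supported where $|\tau|\sim 2^j$ carries a factor $\min(2^j,2^{-j})$. Hypothesis (2) gives only $|\widehat{K}(\xi)|\lesssim\min\{1,1/|\xi|\}$; the improved vanishing $\min\{|\xi|,1/|\xi|\}$ in hypothesis (3) applies to the \emph{derivatives}, not to $\widehat{K}$ itself. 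Hence for $j<0$ the piece has size $O(1)$, not $O(2^j)$, and your geometric series over $j\to-\infty$ diverges. What is true is that the $n=1$ bound makes $\widehat{K}$ Lipschitz away from $0$ with one-sided limits $c_{\pm}$ at $0$, so after subtracting $c_+\chi_{(0,\infty)}+c_-\chi_{(-\infty,0)}$ the remainder does vanish linearly near the origin; but the subtracted symbol, dilated by $2^k$ and maximized over $k$, is precisely a Carleson-type maximal multiplier. That is the irreducible core of the theorem, and your scheme buries it inside a divergent sum rather than isolating and estimating it.

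The second problem is the passage to $1<p<\infty$. You propose a weak-$(1,1)$ bound in $x$ via Calder\'on--Zygmund decomposition and Marcinkiewicz interpolation against $L^\infty$. The quantity $x\mapsto\sup_{\|g\|_2=1}\bigl\|\sup_k|B_k(f,g)(x,\cdot)|\bigr\|_{L^2_z}$ is the operator norm of a maximal multiplier whose symbol samples $f$ near $x$ at all scales; it is not a Calder\'on--Zygmund operator in $x$, no weak-$(1,1)$ endpoint is known or claimed (the statement stops at $p>1$ for a reason), and mean-zero cancellation of bad atoms cannot be exploited through ``smoothness of $\widehat{K}^{(j)}$'' alone for a Carleson-type object. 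The actual argument in \cite{DLTT-BTD} obtains restricted weak type $(p,p)$ for each $p>1$ directly from the mass/size/energy (counting-function) estimates of the tree and forest decomposition and interpolates those, never descending to $L^1$. Finally, the single-tree estimate --- which you correctly identify as the heart --- is left entirely unargued, and the $TT^*$ device you invoke to dispose of $\sup_{\|g\|_{L^2}=1}$ does not obviously yield a tractable object; in \cite{DLTT-BTD} that supremum is retained as the operator norm of a maximal multiplier and estimated as such. As it stands the proposal identifies the right toolbox but does not constitute a proof.
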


Later in their paper (Corollary 3.8), they transfer this result to the ergodic setting and show that the associated return times theorem holds  when $1<p \leq \infty$ and  $q \geq 2$.
This breaks the duality but leaves open the remaining cases  where $\frac{1}{p} + \frac{1}{q} < 2.$ In particular, if one looks at \textbf{Theorem \ref{DLTT1-6}}, one could reasonably ask:

\begin{ques}
Does the Return Times theorem hold for $p=1$ and $q>1$ or $p>1$ and $q=1$?
\end{ques}

Because of the failure of the convergence of the series
$$\sum_{n=-\infty}^{\infty} \frac{f(T^nx)g(S^ny)}{n}$$
for $p=1$ and $q = \infty$ (by the Lacey, Marcus result (referenced in \cite{Tal}) one can not expect the use of the Hilbert transform series to answer the above question.  One can observe that while the Cesaro averages of nonnegative functions is nonnegative, the Hilbert transform does not have this property.  In \cite{DemeterOSM}, however, it is announced that the range of validity of the return times could be extended to $\frac{1}{p} + \frac{1}{q} <3/2$.  See also \cite{OSTTW}.

A second corollary of \textbf{Theorem \ref{DLTT2}} is Theorem 3.4 of the Demeter, Lacey, Tao and Thiele paper which only shows that the set of convergence is closed. Having no obvious dense set the authors refine their techniques to prove a variational inequality which establishes the pointwise convergence of the ergodic Hilbert transform for the specific range of $p$ and $q$.

The method used in \cite{DLTT-BTD} adapts the tools developed by M. Lacey and C. Thiele in their understanding and applications of the celebrated Carleson-Hunt theorem on the convergence of the Fourier series of $L^p$ functions and the bilinear Hilbert transform and Calderon conjecture. The upper bound of $3/2$ appears in their papers.

One wonders if one needs to use such powerful tools to solve the problem of the break of duality. In other words is the difficulty of breaking the duality in the return times theorem at the level of the Carleson-Hunt theorem? This seems to be the case as indicated by T. Tao in one of his blogs.\footnote{http://terrytao.wordpress.com/2007/12/11/}  As such, it would appear that the problem of the break of duality for the return times theorem could shed new light on the Carleson-Hunt theorem and produce substantial refinements. A first step in this direction is a variational Carleson Hunt theorem obtained in \cite{OSTTW}.  Indeed, based on the Multiple Return Times Theorem established by D. Rudolph one can ask the following question.

\begin{ques}
What is the range of values $(p_1, p_2, p_3,...,p_H)$ for which the $H$ term return times theorem holds?
\end{ques}

\subsection{The $(L^1, L^1)$ Case}\label{L1L1}

Does the return times theorem hold for the pair $(L^1, L^1)$?  In 1990, Assani \cite{AssaniWWD} proved that for a finite measure preserving system $(X, \mathcal{F}, \mu, T)$ and $f \in L^1(\mu)$ then for $\mu$-a.e. $x \in X$ the sequence $\{f(T^nx)\}$ is a good universal weight for the norm convergence in $L^1$.

This initial result gives some support for the possibility that the return times theorem would hold for the pair $(L^1, L^1)$.  To approach the question of the return time for $(L^1, L^1)$, it was suggested in \cite{AssaniSLF} to look at the return times for the tail.

\begin{defn}
The \textbf{return times property holds for the tail for the pair $(L^s, L^t)$} if for all $f \in L^s(\mu)$ we can find a set of full measure $X_f$ such that for each $x \in X_f$ for all measure preserving systems $(Y, \mathcal{G}, \nu, S)$ and for all $g \in L^t(\nu)$ the sequence
$$\frac{f(T^nx)g(S^ny)}{n}$$
converges $\nu$-a.e. to 0.
\end{defn}

In \cite{AssaniSLF}, the first author showed that the validity of the return times for the tail for the pair $(L^1, L^1)$ is equivalent to the following counting problem.

\begin{conj}
For any measure preserving system $(X, \mathcal{F}, \mu, T)$ and for all $f \in L^1(\mu)$ and $\mu$-a.e. $x$
$$N^*(f)(x) = \sup_n \frac{\#\left\{k \in \mathbb{N} : \frac{|f|(T^kx)}{k} \geq \frac{1}{n}\right\}}{n} < \infty$$
\end{conj}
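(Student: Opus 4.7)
The plan is to normalize the setup and then attempt a dyadic layer-cake reduction, recognizing up front that the essential difficulty is a weak-type maximal inequality. First, by passing to ergodic components I would assume $T$ ergodic, and since $N^*(f) = N^*(|f|)$, assume $f \geq 0$. The statement is trivial for $f \in L^\infty(\mu)$: if $\|f\|_\infty \leq C$ then $f(T^kx)/k \leq C/k$, forcing $\{k : f(T^kx) \geq k/n\} \subseteq \{1,2,\ldots,\lfloor nC \rfloor\}$ and hence $N^*(f) \leq C$. Observe also that $N^*$ is quasi-subadditive: $N^*(f_1 + f_2) \leq 2(N^*(f_1) + N^*(f_2))$, obtained by splitting $\{k : (f_1+f_2)(T^kx) \geq k/n\}$ into the two sets $\{k : f_i(T^kx) \geq k/(2n)\}$ and relabeling $n \mapsto 2n$. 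Combined with the density of $L^\infty$ in $L^1$, a Banach principle argument would then reduce the conjecture to a weak-type $(1,1)$ maximal inequality $\mu(\{x : N^*(f)(x) > \lambda\}) \leq C\|f\|_1/\lambda$.

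To attack that maximal inequality, I would rewrite
$$\frac{\#\{k : f(T^kx) \geq k/n\}}{n} = \frac{1}{n} \sum_{k=1}^{\infty} \chi_{\{f(T^kx) \geq k/n\}}$$
and split the indices $k$ into dyadic blocks $I_j = [2^j, 2^{j+1})$. On $I_j$ one has $k/n \geq 2^j/n$, so the count in $I_j$ is bounded by the number of visits of $x$ to the set $\{f \geq 2^j/n\}$ during time $[1, 2^{j+1})$. Heuristically, applying Birkhoff to $\chi_{\{f \geq 2^j/n\}}$, each such count is asymptotically $2^{j+1}\mu(\{f \geq 2^j/n\})$, and then summing layer-cake style $\sum_j 2^j \mu(\{f \geq 2^j/n\}) \lesssim n \int f\, d\mu$ yields the desired pointwise bound $N^*(f)(x) \lesssim \|f\|_1$ uniformly in $n$.

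The main obstacle is making this heuristic rigorous. The pointwise Birkhoff theorem provides no uniformity either in the threshold $2^j/n$ or in the dyadic scale $j$, so the dyadic sum cannot be controlled simultaneously for all $n$ and all $x$ outside a single null set. One would need to dominate $N^*$ by a maximal operator whose weak-$L^1$ boundedness can be established, but the trivial pointwise bound $M\chi_{\{f \geq t\}} \leq 1$ that would let one pass from Birkhoff to a pointwise estimate is too lossy once the factors $2^j$ are summed, while a genuine scale-uniform Birkhoff estimate across all thresholds is not available from standard ergodic tools. As noted in \textbf{Section \ref{Duality}}, the break of duality for the return times theorem appears to lie at the level of the Carleson-Hunt theorem, so a time-frequency style decomposition adapted to the return times setting, rather than soft ergodic arguments alone, seems to be the natural framework for closing the maximal inequality and thereby settling both the conjecture above and the $(L^1,L^1)$ return times for the tail.
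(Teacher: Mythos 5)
The statement you are trying to prove is a \emph{conjecture} that the paper reports to be false: as noted immediately after its statement, the papers \cite{ABM-ALC} disprove it, and the paper restates the disproof as \textbf{Theorem \ref{FailRTT}}: in any nonatomic, invertible ergodic system there exists $f\in L^1_+$ with $\sup_n N_n(f)(x)/n=\infty$ almost everywhere. So no proof strategy can succeed, and in particular the weak-type $(1,1)$ maximal inequality $\mu(\{N^*(f)>\lambda\})\leq C\|f\|_1/\lambda$ to which you reduce the problem is itself false (your reduction is essentially correct in the other direction too, by Nikishin/Sawyer-type principles a.e.\ finiteness for all of $L^1$ would force such an inequality, which is one way to see the equivalence with the counting problem). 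Your elementary steps are fine: $N^*(f)\leq\|f\|_\infty$, the quasi-subadditivity, and the dyadic layer-cake heuristic $\sum_j 2^j\mu(\{f\geq 2^j/n\})\lesssim n\|f\|_1$ are all correct as far as they go.

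The obstacle you flag at the end --- that Birkhoff gives no uniformity across the thresholds $2^j/n$ and the scales $2^{j+1}$ --- is not a technical gap awaiting a cleverer tool; it is exactly where the statement breaks. The set $\{f\geq 2^j/n\}$ has measure of order $2^{-j}$, so you are asking the ergodic averages of its indicator to have equilibrated by a time comparable to its expected return time; the construction in \cite{ABM-ALC} arranges (via a tower-type construction) that a single $f\in L^1_+$ over-visits these sets at infinitely many scales simultaneously for a.e.\ $x$, destroying the sum. Your suggestion that Carleson--Hunt-level time-frequency analysis might close the gap conflates two different difficulties: that machinery is relevant to breaking the H\"olderian duality for $1<p,q$, not to the $(L^1,L^1)$ endpoint, where the answer is simply negative. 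What your layer-cake scheme \emph{does} capture is the mechanism behind the positive results in smaller Orlicz classes: Assani's bound $N^*(f)\in L^1$ for $f\in L\log L$ \cite{AssaniPSeries} and the Demeter--Quas result $N^*(f)<\infty$ a.e.\ for $f\in L\log\log L$ \cite{DQ} make essentially this decomposition rigorous by using the extra logarithms to absorb the loss you cannot avoid at $L^1$; the paper notes that failure persists in any Orlicz space strictly larger than $L\log\log\log L$, so the remaining open territory is only the gap down to $L\log\log\log L$.
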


In \cite{AssaniPSeries}, I. Assani showed that $N^*(f) \in L^1$ if $f \in L\log L$.  In \cite{DQ}, C. Demeter and A. Quas showed that $N^*(f)(x) < \infty$ a.e. when $f\in L \log \log L$. However, in the papers \cite{ABM-ALC}, this conjecture is proven false and its connection to return times is discussed in greater detail.  The key result is Theorem 1 of \cite{ABM-ALC} which states

\begin{theorem}\label{FailRTT}
In any nonatomic, invertible ergodic system $(X, \mathcal{F}, \mu, T)$ there exists $f \in L_+^1$ such that
$$\sup_n \frac{N_n(f)(x)}{n} = \infty$$
almost everywhere where
$$N_n(f)(x) = \#\left\{k : \frac{f(T^kx)}{k} > \frac{1}{n}\right\}.$$
\end{theorem}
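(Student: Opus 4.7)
My plan is to exhibit $f$ as a weighted sum of ``tower spikes'' built from Rokhlin's lemma, each responsible for producing a large value of $N_n(\cdot)(x)/n$ on a set of positive measure, and then to combine them. The first step is a Banach-principle-style reduction: it suffices to produce, for every $M>0$, some $f_M\in L^1_+$ with $\|f_M\|_1\le 1$ and
$$\mu\Bigl(\bigl\{x\in X:\sup_n N_n(f_M)(x)/n\ge M\bigr\}\Bigr)\ge\delta$$
for a fixed $\delta>0$ independent of $M$. Given such $f_{M_j}$ with $M_j=4^j$, I would set $f=\sum_{j\ge 1}2^{-j}f_{M_j}\in L^1_+$. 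Monotonicity of $N_n$ in its function argument together with the scaling identity $\sup_n N_n(cf)(x)/n=c\sup_n N_n(f)(x)/n$ (for $c>0$) gives $\mu(\{x:\sup_n N_n(f)(x)/n\ge 2^j\})\ge\delta$ for every $j$, and hence $\{\sup_n N_n(f)/n=\infty\}$ has measure $\ge\delta$. Since $\sup_n N_n(f)(Tx)/n$ and $\sup_n N_n(f)(x)/n$ are either both infinite or both finite (the indices in the two counts are off by one), this event is $T$-invariant modulo null sets, so ergodicity upgrades the lower bound $\delta$ to full measure.

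To construct $f_M$, I would apply Rokhlin's lemma with height $H$ much larger than $M$ to obtain $F\subset X$ such that $F,TF,\ldots,T^{H-1}F$ are pairwise disjoint with $H\mu(F)\ge 1-\epsilon$. The basic building block is the ``tower spike'' $g=M\cdot\mathbf{1}_{\bigcup_{i=0}^{m-1}T^iF}$ with $m=\lfloor H/M\rfloor$, for which $\|g\|_1=Mm\mu(F)\le 1$. For $x\in T^\ell F$ with $\ell<m$, the orbit $T^kx$ remains in $\operatorname{supp}g$ for $k=0,1,\ldots,m-1-\ell$, giving
$$N_n(g)(x)\ge\min(m-1-\ell,\,nM-1);$$
balancing the two terms at $n\approx(m-\ell)/M$ yields $\sup_n N_n(g)(x)/n\ge M-O(1)$ for every $x$ in the bottom half of the spike.

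The main obstacle is that the good set produced by a single spike has measure only $\sim 1/(2M)$, which tends to zero as $M\to\infty$ and is therefore far short of the uniform lower bound $\delta$ required by the reduction. Overcoming this is the technical heart of the proof. My plan is to stack translated spikes along the entire Rokhlin tower, taking
$$f_M=M\cdot\sum_{j=0}^{\lfloor H/m\rfloor-1}w_j\cdot\mathbf{1}_{\bigcup_{i=jm}^{(j+1)m-1}T^iF},$$
so that every point inside the tower (measure $\ge 1-\epsilon$) sits in the first half of some block $j$ and receives a spike from that block. The non-negative weights $w_j$ must be chosen to satisfy simultaneously $\|f_M\|_1=M\sum_j w_j\cdot m\mu(F)\le 1$ and, for a.e.\ $x$ in the tower, to ensure that the block containing $x$ still produces $\sup_n N_n(f_M)(x)/n\ge M-O(1)$. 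Constant weights force $\|f_M\|_1$ to scale linearly with the number of blocks $H/m\sim M$, while summably decaying weights make the per-block spike too weak; the resolution (carried out in \cite{ABM-ALC}) involves taking $H$ to be a rapidly-growing function of $M$ and choosing the $w_j$ by a careful combinatorial scheme so that orbit-level contributions from multiple blocks combine constructively rather than cancelling. The detailed analysis of orbits within a Rokhlin tower of very large height, including control of the $\epsilon$-small excursion outside the tower, is the delicate step on which the argument rests.
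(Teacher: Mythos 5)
Your outer framework is correct and is the routine part of the argument: the superposition $f=\sum_j 2^{-j}f_{M_j}$ with $M_j=4^j$, the monotonicity of $N_n$ in the function argument, the (one-sided, which is all you need) scaling inequality $\sup_n N_n(2^{-j}g)(x)/n\ge 2^{-j}\sup_m N_m(g)(x)/m$, and the two-sided comparison $N_n(f)(x)-1\le N_n(f)(Tx)\le N_{2n}(f)(x)$ showing that $\{x:\sup_n N_n(f)(x)/n=\infty\}$ is $T$-invariant mod $\mu$, so that ergodicity upgrades measure $\ge\delta$ to full measure --- all of this checks out, as does your computation for a single Rokhlin spike. But the entire content of the theorem lies in the one step you do not carry out: producing $f_M\in L^1_+$ with $\|f_M\|_1\le 1$ and $\mu(\{\sup_n N_n(f_M)/n\ge M\})\ge\delta$ uniformly in $M$ (or some Borel--Cantelli substitute). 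Your own accounting shows a single spike yields a good set of measure only about $1/(2M)$, and your proposed repair --- stacking height-$M$ spikes along the tower with weights $w_j$ --- is left unspecified exactly where it matters; you note yourself that constant weights break the $L^1$ normalization and decaying weights kill the spikes, and then defer to \cite{ABM-ALC}. That deferral is not a technicality to be outsourced: Theorem \ref{FailRTT} \emph{is} Theorem 1 of \cite{ABM-ALC} (the survey states it without proof), and the proof there is essentially that combinatorial multi-scale construction.

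There is, moreover, concrete evidence quoted in this very survey that no simple version of your scheme can close the gap. Demeter and Quas \cite{DQ} prove that $N^*(f)<\infty$ a.e. whenever $f\in L\log\log L$. If each $f_{M_j}$ took only the values $0$ and $M_j$ (single-height spikes) with $\|f_{M_j}\|_1\le 1$, then $\mathrm{supp}\,f_{M_j}$ has measure at most $4^{-j}$, the function $f=\sum_j 2^{-j}f_{M_j}$ is bounded by $2^{j+1}$ off $\bigcup_{i>j}\mathrm{supp}\,f_{M_i}$, and a direct estimate gives $\int f\log\log(e^e+f)\,d\mu<\infty$; hence $N^*(f)<\infty$ a.e., contradicting what your reduction would conclude. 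So the uniform-$\delta$ claim is actually \emph{false} for any arrangement of single-height spikes, no matter how they are placed in the tower. Any successful choice of the weights $w_j$ must spread the values of $f_M$ over an enormous range of heights --- enough to push $f$ outside $L\log\log L$, and in fact outside every Orlicz space strictly larger than $L\log\log\log L$, as remarked after Theorem \ref{FailRTT} --- and verifying that such a multi-scale arrangement still produces $\sup_n N_n(f_M)/n\ge M$ on a set of measure bounded below is precisely the delicate heart of \cite{ABM-ALC}. As it stands, your proposal is a correct framing of the problem together with an accurate diagnosis of why the naive constructions fail, but the proof itself is missing.
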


The work in \cite{ABM-ALC} demonstrates that with the above result the return times for the tail does not hold for the pair $(L^1, L^1)$ and thus the return times theorem itself does not hold for the pair $(L^1, L^1)$.  As noted in \cite{DQ}, the method used in \cite{ABM-ALC} to prove \textbf{Theorem \ref{FailRTT}} shows in fact that for $f$ in any Orlicz space strictly bigger than $L \log \log \log L$ we can still have $N^*(f)(x) = \infty$ a.e.

\begin{ques}
Is $N^*(f)(x)<\infty$ a.e. for $f\in L \log \log \log L$?
\end{ques}

A related question was raised in \cite{CT} by M. J Carro and P. Tradacete for the following related operator $A$ introduced by I. Assani (see \cite{ABM-ALC}) and defined pointwise as
$$A(f)(x) = \sup_{\lambda}\lambda\cdot m\left\{0 < y < x: \frac{f(x - y)}{y} > \lambda\right\}.$$

In Section 5.1 of \cite{ABM-CAC} we see some applications for this negative result for the Return Times in $L^1$.  Assume that $(\Omega, \mathcal{B}, P)$ is a probability measure space and $Y_1, Y_2, \ldots$ is a sequence of i.i.d. random variables of values in ${-1,1}$ with $P(Y_n = 1)= \sigma$ and $P(Y_n=-1) = 1-\sigma$ where $\frac{1}{2} \leq \sigma \leq 1$.  Set
$$a_n(\omega)= \sum_{k=1}^n Y_k(\omega).$$
By the strong law of large numbers we know that
$$\lim_n \frac{a_n(\omega)}{n} = E(Y_1) = 2\sigma -1.$$
If $\sigma > \frac{1}{2}$ the for $\mu$-a.e. $\omega$ we have $\lim_n a_n(\omega) = \infty$.  Fix such an $\omega$ and let $f \in L^p(\mu)$ where $1 \leq p \leq \infty$.  It was proven in \cite{LPWR} that if $p>1$ and $\sigma>\frac{1}{2}$, then the averages
$$\frac{1}{N}\sum_{n=1}^Nf(T^{a_n(\omega)}x)$$
converge for $\mu$-a.e. $x$.  Using \textbf{Theorem \ref{FailRTT}} we have the following result.

\begin{theorem}
Consider a sequence of i.i.d. random variables $Y_1, Y_2, ldots$ defined on a probability measure space $(\Omega, \mathcal{B}, P)$ of values ${-1,1}$.  Assume that $P(Y_n=1) = \sigma$ and $P(Y_n = -1) = 1-\sigma$ with $\sigma > \frac{1}{2}$.  Set $a_n(\omega) = \sum_{k=1}^nY_k(\omega)$ and fix $\omega$ such that $\lim_n a_n(\omega)= \infty$.  In any aperiodic dynamical system there exists a function $f \in L^1(\mu)$ such that the averages
$$\frac{1}{N}\sum_{n=1}^Nf(T^{a_n(\omega)}x)$$
do not converge almost everywhere.
\end{theorem}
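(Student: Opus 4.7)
The plan is to use \textbf{Theorem \ref{FailRTT}} as the essential counterexample. Recall that in any aperiodic nonatomic invertible ergodic system, that theorem furnishes $f \in L^1_+$ such that
$$\sup_n \frac{N_n(f)(x)}{n} = \infty \quad \text{for $\mu$-a.e. } x,$$
where $N_n(f)(x) = \#\{k : f(T^kx)/k > 1/n\}$. Equivalently, the sequence $\{f(T^kx)/k\}_{k \geq 1}$ fails to lie in weak $\ell^1(\mathbb{N})$ for $\mu$-a.e.\ $x$. My goal is to transfer this failure to the random walk averages along $a_n(\omega)$.

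The first step is a simple Kronecker-type reduction. Suppose for contradiction that $\frac{1}{N}\sum_{n=1}^N f(T^{a_n(\omega)}x)$ converges for $\mu$-a.e.\ $x$. Writing $S_N$ for the partial sum, the identity
$$\frac{S_N}{N} - \frac{N-1}{N}\cdot\frac{S_{N-1}}{N-1} = \frac{f(T^{a_N(\omega)}x)}{N}$$
together with the convergence of $S_N/N$ forces $f(T^{a_N(\omega)}x)/N \to 0$ for $\mu$-a.e.\ $x$. Since $\omega$ is fixed with $a_N(\omega)/N \to 2\sigma - 1 > 0$ by the strong law of large numbers, this is equivalent to $f(T^{k}x)/k \to 0$ along the subsequence $k = a_N(\omega)$, for $\mu$-a.e.\ $x$.

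The second step is to derive a contradiction with \textbf{Theorem \ref{FailRTT}}. Two facts are exploited. First, the range $R(\omega) = \{a_n(\omega) : n \geq 1\}$ is a subset of $\mathbb{N}$ of lower density at least $(2\sigma-1)$, again by the SLLN for the walk. Second, the condition $\sup_n N_n(f)(x)/n = \infty$ encodes precisely the failure of weak $\ell^1$ control on $\{f(T^kx)/k\}$, so the level sets $\{k : f(T^kx)/k > 1/n\}$ grow faster than any linear rate in $n$. I would then revisit the Rokhlin tower construction from \cite{ABM-ALC}, which places the $L^1$ mass of $f$ on carefully chosen tower levels, and refine it so that the exceptional indices witnessing the failure of weak $\ell^1$ summability must meet $R(\omega)$ in a positive-density subset of $R(\omega)$, for $\mu$-a.e.\ $x$. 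This forces $\limsup_N f(T^{a_N(\omega)}x)/a_N(\omega) > 0$ a.e., contradicting the conclusion of the first step.

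The main obstacle lies in this second step: one must coordinate an intrinsic construction of $f$ inside $(X, \mathcal{F}, \mu, T)$ with the externally fixed deterministic sequence $a_n(\omega)$, whose detailed arithmetic structure (overshoots, local times, gap distribution) is $\omega$-dependent and not invariant under $T$. The essential point to verify is that the tower levels carrying the mass of $f$ can be aligned so that, for $\mu$-a.e.\ $x$, the induced bad index set $B(x) \subset \mathbb{N}$ is not disjoint from $R(\omega)$ in a way that would permit the subsequence $a_N(\omega)$ to \emph{avoid} the obstruction. Aperiodicity of $T$ enters here as the hypothesis that allows Rokhlin towers of arbitrary height, giving enough freedom to distribute mass so that $B(x) \cap R(\omega)$ retains positive density along $R(\omega)$ a.s.
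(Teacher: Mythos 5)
There is a genuine gap, and it is fatal to the strategy as written. Your plan is to show that convergence of the averages forces $f(T^kx)/k \to 0$ along the range of the walk, and then to contradict this via \textbf{Theorem \ref{FailRTT}} by arranging $\limsup_N f(T^{a_N(\omega)}x)/a_N(\omega) > 0$ a.e. But that contradiction is unattainable for \emph{any} $f \in L^1_+(\mu)$: since $\sum_k \mu\{f\circ T^k > \epsilon k\} = \sum_k \mu\{f > \epsilon k\} \leq \epsilon^{-1}\|f\|_1 < \infty$, Borel--Cantelli gives $f(T^kx)/k \to 0$ for $\mu$-a.e. $x$, along the full sequence of integers. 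The conclusion of \textbf{Theorem \ref{FailRTT}} --- superlinear growth of the counting function $N_n(f)(x)$ in $n$ --- is strictly weaker than a positive limsup of $f(T^kx)/k$; a sequence such as $c_k = 1/\log k$ tends to $0$ while $\#\{k : c_k > 1/n\}/n \to \infty$. You conflate ``fails to lie in weak $\ell^1$'' with ``does not tend to zero,'' and no refinement of the Rokhlin tower alignment with $R(\omega)$ can rescue this, because the target statement is false for every integrable $f$. (Incidentally, the alignment issue you worry about is moot: a nearest-neighbour walk drifting to $+\infty$ has range containing all sufficiently large integers, not merely a set of lower density $2\sigma-1$.)

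The single-term tail is simply the wrong necessary condition; the divergence comes from the multiplicity with which the transient walk revisits sites. Writing $L_j^N(\omega) = \#\{n \leq N : a_n(\omega) = j\}$, the average equals $\frac{1}{N}\sum_j L_j^N(\omega)\, f(T^jx)$, a weighted average with unbounded integer weights, and since the first and last visit times to site $j$ are both asymptotic to $j/(2\sigma-1)$, convergence forces $L_j(\omega) f(T^jx)/j \to 0$, where $L_j(\omega)$ is the total local time at $j$. These local times are unbounded (essentially independent, geometrically distributed), so the relevant obstruction is a tail condition on the \emph{products} $L_j(\omega) f(T^jx)$, and a Borel--Cantelli computation in $\omega$ converts its failure into a lower bound by the counting quantities $N_n(f)(x)$ appearing in \textbf{Theorem \ref{FailRTT}}. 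This is the route the paper intends: it offers no detailed argument of its own but derives the theorem from \textbf{Theorem \ref{FailRTT}} via Section 5.1 of \cite{ABM-CAC}, where exactly this local-time reduction is carried out. Your proposal never brings the local times into play, which is where all the content lies.
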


\section{Other Notes on the Return Times Theorem}

\subsection{The Sigma-Finite Case}

As Birkhoff's Pointwise Ergodic Theorem (\textbf{Theorem \ref{Birk}}) actually holds for $\sigma$-finite measure spaces, one question to consider is whether the Return Times Theorem (\textbf{Theorem \ref{RTT}}) can be extended to $\sigma$-finite measure spaces as well.  This question was address by Assani in \cite{AssaniTRT} in the following theorem.

\begin{theorem}
Let $(X, \mathcal{F}, \mu, T)$ be a measure preserving system on the $\sigma$-finite measure space $(X, \mathcal{F}, \mu)$.  Given a set $A$ with finite measure, then the sequence $\{\chi_A(T^nx)\}$ is $\mu$-a.e., a good universal weight for the pointwise ergodic theorem in $L^1$.
\end{theorem}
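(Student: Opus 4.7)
My plan is to reduce the $\sigma$-finite statement to Bourgain's Return Times Theorem (\textbf{Theorem \ref{RTT}}) in the finite-measure case, using Hopf's decomposition. The key observation is that because $\chi_A \leq 1$, the maximal operator in the target function $g$ is dominated pointwise by the Hardy--Littlewood maximal operator $M_S$ on $(Y,\mathcal{G},\nu,S)$, \emph{uniformly in} $x$; this lets the Banach Principle carry convergence from $L^\infty(\nu)$ to $L^1(\nu)$ without enlarging the exceptional $\mu$-null set of $x$.

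First I would split $X = C \sqcup D$ via Hopf's decomposition. On the dissipative part $D$, $\mu$-a.e.\ orbit visits the finite-measure set $A$ only finitely often, so $\chi_A(T^n x) = 0$ eventually and the Ces\`aro averages converge trivially to $0$ for any target system $(Y,\mathcal{G},\nu,S)$ and any measurable $g$. By ergodic decomposition for $\sigma$-finite systems I may then assume $(X,\mathcal{F},\mu,T)$ is conservative and ergodic. If $\mu(X) < \infty$, \textbf{Theorem \ref{RTT}} applies directly with $f = \chi_A \in L^\infty(\mu)$, providing $\nu$-a.e.\ convergence on the dense subspace $L^\infty(\nu)$ for all $x$ in some universal $\mu$-full measure set $X^*$. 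If $\mu(X) = \infty$, then Hopf's ratio ergodic theorem (equivalently, Kac's identity $\int_A r_A\, d\mu = \mu(X) = \infty$ combined with Birkhoff applied to $r_A$ on the finite probability-preserving induced system $(A, T_A, \mu|_A/\mu(A))$) gives $\frac{1}{N}\sum_{n=1}^N \chi_A(T^n x) \to 0$ for $\mu$-a.e.\ $x$. Thus for every bounded $g$ and every such $x$,
\begin{equation*}
\left|\frac{1}{N}\sum_{n=1}^N \chi_A(T^n x) g(S^n y)\right| \leq \|g\|_\infty \cdot \frac{1}{N}\sum_{n=1}^N \chi_A(T^n x) \longrightarrow 0
\end{equation*}
uniformly in $y$, again giving $\nu$-a.e.\ convergence on $L^\infty(\nu)$ for all $x$ in a universal $\mu$-full measure set $X^*$.

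To pass from $g \in L^\infty(\nu)$ to $g \in L^1(\nu)$, I would invoke the Banach Principle with the uniform maximal bound
\begin{equation*}
\sup_N \left|\frac{1}{N}\sum_{n=1}^N \chi_A(T^n x) g(S^n y)\right| \leq \sup_N \frac{1}{N}\sum_{n=1}^N |g|(S^n y) = M_S|g|(y),
\end{equation*}
which holds for every $x$ (because $\chi_A \leq 1$), with $M_S$ of weak type $(1,1)$ on $(Y,\mathcal{G},\nu)$ by the Hardy--Littlewood maximal inequality. Combined with $\nu$-a.e.\ convergence on the dense subspace $L^\infty(\nu)$, this yields $\nu$-a.e.\ convergence for every $g \in L^1(\nu)$, with the same universal $\mu$-null set of $x$.

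The main obstacle is the careful bookkeeping of the ergodic decomposition in the $\sigma$-finite setting (Rokhlin--Aaronson theory), ensuring that the universal exceptional $\mu$-null set depends only on $(X,\mathcal{F},\mu,T)$ and $A$, not on the target $(Y,\mathcal{G},\nu,S)$ or $g$. Once this is arranged, the argument is notably cleaner than for a general $L^\infty$ weight because the uniform bound $\chi_A \leq 1$ decouples the Banach Principle step from any reference to the first system.
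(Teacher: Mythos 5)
Your proposal is correct and follows essentially the same route as the paper's: the paper describes Assani's proof precisely as using Hopf's decomposition to extend the BFKO result (\textbf{Theorem \ref{RTT}}) to the $\sigma$-finite setting, which is exactly your reduction --- dissipative part trivial since orbits there meet $A$ only finitely often, finite-measure conservative components handled by \textbf{Theorem \ref{RTT}}, infinite conservative components handled by the vanishing density of returns, and the Banach Principle (with a weak $(1,1)$ maximal bound that is uniform in $x$ because $\chi_A \leq 1$) to pass from $L^\infty(\nu)$ to $L^1(\nu)$. The only cosmetic correction is that your dominating operator is the ergodic maximal operator, whose weak $(1,1)$ bound is the maximal ergodic theorem (or Hardy--Littlewood via Calder\'on transference) rather than the Hardy--Littlewood inequality itself; this does not affect the argument.
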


The proof utilizes the method of Hopf's decomposition \cite{KrengelET} to extend the BFKO result to a $\sigma$-finite measure space.

One cannot extend the Return Times Theorem (\textbf{Theorem \ref{RTT}}) to a more general situation with an infinite measure space.  In Lacey \cite{LaceyTRT} the following theorem is used to show that there exists a sigma-finite measure preserving system $(X, \mathcal{F}, \mu, T)$ and a set $A \subset X$ of positive finite measure so that for almost every $x \in X$ and for every aperiodic measure preserving system $(Y, \mathcal{G}, \nu, S)$ with $\nu(S) = 1$, there is a $g \in L^2(\nu)$ so that the averages
$$\tau_n^{-1}(x)\sum_{m=1}^n \chi_A(T^mx)g(S^my)$$
diverge for $\nu$-a.e. $y$ where
$$\tau_n(x) = \sum_{m=1}^n \chi_A(T^nx).$$

\begin{theorem}
Let $X_m$ be non-negative i.i.d. integer-valued random variables such that $P(X_1 > \lambda) \sim \lambda^{-\alpha}$ as $\lambda \rightarrow +\infty$.  Here $0 < \alpha < 1$, so that $EX_1 = +\infty$.  Then with probability 1, for every aperiodic finite measure-preserving system $(Y, \mathcal{G}, \nu, S)$ there is a square-integrable function $g$ on $Y$ for which
$$A_n(g)(y) = \frac{1}{N}\sum_{n=1}^Ng(S^{\tau_n}y)$$
diverges for $\nu$-a.e. $y$, where the power of $S$ above is $\tau_n = \sum_{m=1}^nX_m$.
\end{theorem}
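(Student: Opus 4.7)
The approach is to convert the extreme lacunarity of the heavy-tailed random walk $\tau_n$ into a universal obstruction to averaging, then realize that obstruction via Rokhlin--Halmos towers in any aperiodic $S$. Because $0 < \alpha < 1$, classical stable-law theory gives $\tau_n/n^{1/\alpha} \Rightarrow \xi$ for a positive $\alpha$-stable $\xi$, and $\max_{k\le n} X_k / \tau_n \to 1$ in probability, so the sum is dominated by its largest jump and typical sample paths have long quiet stretches punctuated by enormous leaps. A Borel--Cantelli argument applied to the tail $P(X_1 > \lambda)\sim \lambda^{-\alpha}$ yields, with probability one, strictly increasing sequences $n_k\uparrow\infty$ and $M_k\uparrow\infty$ with $\tau_{n_k}\le M_k$ but $X_{n_k+1}> C_k M_k$ for some $C_k\to\infty$. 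This random skeleton depends only on $(X_m)$, not on any future choice of $S$, which is what opens the door to a universal construction.

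Fix now an arbitrary aperiodic $(Y,\mathcal{G},\nu,S)$. For each $k$ I would use the Rokhlin--Halmos lemma to find a base $B_k\in\mathcal{G}$ with $B_k, SB_k,\ldots,S^{M_k-1}B_k$ disjoint and covering all but $\eta_k$ of $\nu$ (with $\eta_k$ summable). Record $J_k=\{\tau_n: 1\le n\le n_k\}\subset\{0,1,\ldots,M_k-1\}$, set $E_k=\bigsqcup_{j\in J_k} S^j B_k$, and define
\[
g=\sum_{k=1}^{\infty} c_k\,\mathbf{1}_{E_k},
\]
with $c_k\nearrow\infty$ slowly enough that $\sum_k c_k^2\,\nu(E_k)<\infty$; since $\nu(E_k)\le |J_k|\,\nu(B_k)\lesssim n_k/M_k$, this is easy to arrange by taking $M_k$ sufficiently large, so $g\in L^2(\nu)$. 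For $y\in B_k$ every orbit point $S^{\tau_n}y$ with $n\le n_k$ lies in $E_k$, forcing $A_{n_k}(c_k\mathbf{1}_{E_k})(y)=c_k$, while for $N\gg n_k$ the residues $\tau_n \bmod M_k$ equidistribute and $A_N(c_k\mathbf{1}_{E_k})(y)\approx c_k n_k/M_k$, which is tiny. Transferring this picture along the $S$-orbit spreads the oscillation to $\nu$-a.e.\ $y$, forcing $\limsup_N A_N g(y)=\infty$ and $\liminf_N A_N g(y)<\infty$ almost everywhere.

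The main obstacle is the simultaneous decoupling of scales and universality over $S$: the spike produced by the $k$-th summand must not be drowned out by the other summands, and the same realisation of $(X_m)$ must work for every aperiodic system. Decoupling is handled by choosing $M_k$ to grow super-exponentially and $c_k$ slowly enough to stay in $L^2$ while fast enough to diverge, so cross-terms between different $k$ contribute negligibly; an auxiliary second-moment or Banach-principle estimate is needed here to upgrade divergence at a single $y$ to divergence $\nu$-a.e. Universality follows because the combinatorial skeleton $(n_k, M_k, c_k, J_k)$ is fixed once and for all from $(X_m)$, and only the Rokhlin tower bases $B_k$ depend on $S$; the Rokhlin--Halmos lemma guarantees these bases exist for any aperiodic $S$ and the subsequent quantitative estimates depend only on the skeleton, not on the finer geometry of $S$. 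This uniformity is what distinguishes the result from routine examples of a.e.\ divergence and is the technical heart of Lacey's argument.
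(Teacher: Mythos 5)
The paper does not reproduce Lacey's proof; it only records that the construction proceeds by realizing $\tau_n$ as the return times of a random walk to the origin and showing that such a return-time sequence is almost surely a universally bad sequence for the pointwise ergodic theorem. Your proposal attempts a direct route instead, and it has a genuine gap at its central step: the upgrade from divergence on the tower base to divergence $\nu$-a.e. In your construction the spike $A_{n_k}(c_k\mathbf{1}_{E_k})(y)=c_k$ is obtained only for $y$ in the base $B_k$, a set of measure about $1/M_k$. It does not propagate to the rest of the tower: if $y=S^ib$ with $b\in B_k$ and $i\neq 0$, then $S^{\tau_n}y\in E_k$ requires $\tau_n+i\in J_k$, i.e.\ $\tau_n+i=\tau_m$ for some $m\le n_k$, an arithmetic coincidence that fails for typical $i$. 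Since you need $\sum_k c_k^2\,\nu(E_k)<\infty$, the measures $\nu(B_k)\approx 1/M_k$ are forced to be summable, so by Borel--Cantelli almost every $y$ lies in only finitely many of the sets $B_k$, and your construction establishes unbounded averages only on a null set. The phrase ``transferring this picture along the $S$-orbit'' is precisely the point that cannot be carried out as stated, and the appeal to ``a Banach-principle estimate'' does not repair it: the Banach principle converts a failure of the weak-type maximal inequality into the existence of a bad function, but to apply it (or the del Junco--Rosenblatt category argument) you must first exhibit functions of small norm whose maximal function is large on a set of measure bounded \emph{away from zero} --- ideally close to $1$ --- which is exactly what the single-tower construction does not provide.

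The standard way to close this gap, and the route consistent with the paper's description, is to prove a strong sweeping-out property for the averages along $\tau_n$ --- for instance via Bourgain's entropy criterion, or via the exponential-sum criterion $\limsup_N\bigl|\frac{1}{N}\sum_{n=1}^N e^{2\pi i\tau_n\theta}\bigr|=1$ for a dense set of $\theta$, which is where the heavy tail $P(X_1>\lambda)\sim\lambda^{-\alpha}$ with $0<\alpha<1$ actually enters. That criterion is verified almost surely for the random sequence $\tau_n$, and strong sweeping out then yields, in \emph{every} aperiodic system, a single $g$ (even a characteristic function, hence in $L^2$) with $\limsup_N A_N(g)=1$ and $\liminf_N A_N(g)=0$ a.e. Your preliminary observations (stable scaling of $\tau_n$, domination by the largest jump, the Borel--Cantelli skeleton independent of $S$) are correct and would feed naturally into such an argument, but as written the proposal does not contain the mechanism that produces divergence on a set of full measure.
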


This construction builds off of a creation of a simple random walk whose returns of the walk to the origin is almost surely a bad sequence along which to try the pointwise ergodic theorem.

\subsection{Recent Extensions}

In \cite{HK-US}, B. Host and B.  Kra have extended the Wiener-Wintner Theorem (\textbf{Theorem \ref{WW}}) by showing the following.

\begin{theorem}\label{HKNew}
Let $(X,\mathcal{F}, \mu, T)$ be an ergodic system and $f\in L^1(\mu)$  Then there is a set of full measure $X_f$ such that for every $x\in X_f$ the averages
$$\frac{1}{N}\sum_{n=1}^N a_nf(T^nx)$$
converge for every nilsequence $(a_n)$.
\end{theorem}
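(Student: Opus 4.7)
The plan is to mirror the classical route to the Wiener--Wintner theorem, but with the Kronecker factor replaced by the Host--Kra--Ziegler tower $(\mathcal{Z}_k)_{k\geq 0}$ defined earlier in the paper. A $k$-step nilsequence has the form $a_n=F(g^n x_0)$ for a continuous $F$ on a $k$-step nilmanifold $G/\Gamma$, so the family of all nilsequences is naturally stratified by nilpotency class. This suggests the dichotomy:
\begin{enumerate}
\item If $f\in\mathcal{Z}_k^\perp$, then for $\mu$-a.e.\ $x$,
$$\sup_{(a_n)}\left|\frac{1}{N}\sum_{n=1}^N a_n f(T^n x)\right|\longrightarrow 0,$$
the supremum taken over $k$-step nilsequences with uniformly bounded complexity (a fixed nilmanifold, a fixed bound on $\|F\|_{C^0}$ and Lipschitz constant).
\item If $f$ is $\mathcal{Z}_k$-measurable, then the averages converge pointwise for every nilsequence.
\end{enumerate}

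First I would reduce to $f\in L^\infty(\mu)$ via the Banach principle, which requires a maximal inequality for the operators $f\mapsto \sup_N|\frac1N\sum a_n f(T^n\cdot)|$; this follows from the ordinary maximal ergodic theorem since nilsequences are bounded. With $f\in L^\infty$, I would split $f=\mathbb{E}(f\mid\mathcal{Z}_k)+(f-\mathbb{E}(f\mid\mathcal{Z}_k))$ for each $k$.

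For part~(1), which is the main obstacle, I would run a van der Corput / PET induction on the nilpotent degree. The base case is Bourgain's uniform Wiener--Wintner theorem (\textbf{Theorem \ref{UWW}}), which handles $1$-step (abelian) nilsequences $e^{2\pi i n\theta}$ when $f\in\mathcal{K}^\perp=\mathcal{Z}_1^\perp$. To climb from degree $k-1$ to degree $k$, I would apply van der Corput's inequality to
$$\left|\frac{1}{N}\sum_{n=1}^N a_n f(T^n x)\right|^2,$$
converting a bound on this quantity, uniform in $k$-step nilsequences $a_n=F(g^n x_0)$, into a bound on averages of $f\cdot f\circ T^h$ against a $(k-1)$-step nilsequence built from $F(g^{n+h}\cdot)\overline{F(g^n\cdot)}$, which by the group commutator identity is $(k-1)$-step along the diagonal. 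Averaging in $h$ and invoking the inductive hypothesis produces, via Fatou, the seminorm $\||f|\|_{k+1}$ from the paper's definition of $\mathcal{Z}_k$, which vanishes on $\mathcal{Z}_k^\perp$. The hard part is achieving the supremum over nilsequences inside the limit: I would control this by a separability/precompactness argument on the space of Lipschitz functions $F$ of bounded norm on the fixed nilmanifold $G/\Gamma$, reducing the sup to a countable one, and then taking a countable union of null sets across a dense family of nilmanifolds of each degree.

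For part~(2), I would use the Host--Kra structure theorem identifying $\mathcal{Z}_k$ as an inverse limit of $k$-step nilsystem factors. Approximating $\mathbb{E}(f\mid\mathcal{Z}_k)$ in $L^2$ by a continuous function $\tilde f$ on a $k$-step nilfactor $(W,\mu_W,R)$, the product $\tilde f(R^n w)\cdot a_n$ is a continuous function evaluated at $(R^n w, g^n x_0)$ on the product nilmanifold $W\times G/\Gamma$. Leibman's equidistribution theorem on nilmanifolds then gives pointwise convergence of the averages for every $w$ in a set of full measure, with the limit identified as an integral against the Haar measure of the orbit closure. Combining (1) for $f-\mathbb{E}(f\mid\mathcal{Z}_k)$ with (2) applied to $\mathbb{E}(f\mid\mathcal{Z}_k)$, and letting $k\to\infty$ inside a Cauchy-sequence argument in $N$, yields the existence of the limit on a single $X_f$ of full measure that is independent of the nilsequence. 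The delicate point, again, is that the null set exceptional to (1) must be chosen before the nilsequence is specified; this is secured precisely by the uniformity over bounded-complexity nilsequences established in part~(1).
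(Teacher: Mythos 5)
The paper itself does not prove Theorem \ref{HKNew}: it is quoted from Host and Kra \cite{HK-US}, with \cite{E12} cited for a second proof, so there is no internal argument to measure yours against. That said, your outline does follow the route taken in those papers: split $f$ along the Host--Kra--Ziegler tower, kill the $\mathcal{Z}_k^{\perp}$ part by a van der Corput induction on the nilpotency step tied to the seminorms $\||\cdot|\|_{k+1}$, and treat the $\mathcal{Z}_k$-measurable part via the structure theorem and pointwise equidistribution on nilmanifolds. (Host and Kra package the first half differently, through uniformity seminorms on $\ell^{\infty}$ applied to the sequence $(f(T^nx))$ together with a direct/inverse theorem for nilsequences; the uniform-over-bounded-complexity formulation you aim for in part (1) is closer in spirit to Eisner and Zorin-Kranich.)

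There is, however, one genuine gap in the inductive step of part (1). You assert that $n\mapsto F(g^{n+h}y)\overline{F(g^{n}y)}$ is a $(k-1)$-step nilsequence ``by the group commutator identity.'' For a general continuous $F$ on a $k$-step nilmanifold this is false: the pair $(g^{n+h}y,\,g^{n}y)$ is an orbit on a subnilmanifold of $G/\Gamma\times G/\Gamma$ whose structure group is still $k$-step nilpotent, so the product is a priori only another $k$-step nilsequence and your induction does not close. The step drops only after one decomposes $F$ into vertical Fourier components with respect to the central torus $G_k/(G_k\cap\Gamma)$: for a component with nontrivial vertical character the tensor $F\otimes\overline{F}$ is invariant under the diagonal action of $G_k$ and hence descends to a $(k-1)$-step quotient, while the component with trivial vertical character already factors through a $(k-1)$-step nilmanifold to begin with. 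This vertical decomposition, together with the bookkeeping that feeds $f\cdot\overline{f\circ T^h}$ into the recursion $\||f|\|_{k+1}^{2^{k+1}}=\lim_H\frac{1}{H}\sum_{h=1}^{H}\||f\cdot f\circ T^h|\|_{k}^{2^k}$, is the heart of the direct theorem for nilsequences and must be supplied explicitly. The remainder of your plan --- the Banach-principle reduction to $L^{\infty}$, the separability argument that moves the supremum over bounded-complexity nilsequences inside the limit, and the Leibman equidistribution step for the structured part --- is sound in outline.
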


B. Host and  B. Kra. used \textbf{Theorem \ref{HKNew}}  to also prove that the sequence $(f(T^nx))$ is a.e. a good weight for the convergence in $L^2$-norm of the averages
$$\frac{1}{N}\sum_{n=1}^N (f(T^nx))\prod_{k=1}^K g_k\circ T^{kn}$$
extending their result in cite{HK-NEA} where $g_k\in L^{\infty}$.  The idea of mixing weights from a.e. multiple recurrence and the multiple return time theorem was introduced in \cite{AssaniMRT} for weakly mixing systems.

Note that a sequence $(a_n) \in \ell^{\infty}$ is a basic $l$-step nilsequence if there exists a basic $l$-step nilsystem $(G/\Gamma, S)$, a point $y \in G/\Gamma$, and a function $F \in C(G/\Gamma)$ such that $a_n = F(S^ny)$ for all $n \in \mathbb{N}$.  In \cite{E12}, a new proof of the result of Host and Kra is given by T. Eisner and P. Zorin-Kranich and extended to F\o lner sequences.

\subsection{Wiener-Wintner Dynamical Functions}

\begin{defn}
Given an ergodic dynamical system $(X, \mathcal{F}, \mu, T)$ a function $f\in L^p(\mu)$ is said to be a \textbf{Wiener-Wintner function of power type $\alpha$} if we can find a constant $C$ such that
$$\left\|\sup_t\left|\frac{1}{N}\sum_{n=1}^N f(T^nx) e^{2\pi int}\right|\right\|_1 \leq  \frac{C}{N^{\alpha}}$$
for each $N$.
\end{defn}

Ergodic systems having a dense set of Wiener-Wintner functions in $\mathcal{K}^{\perp}$ are called Wiener-Wintner dynamical systems (see \cite{AssaniWWD}).  Such systems allow one to give simple proofs of the return times theorem, the a.e double recurrence \cite{BourgainDRA}, and the convergence of the one sided ergodic Hilbert transforms
$$\sum_{n=1}^{\infty} \frac{f(T^nx) g(S^ny)}{n}$$
and
$$\sum_{n=1}^{\infty} \frac{f(T^{-n}x) g(S^{-n}y)}{n}.$$

For example, let us look at a proof for why the one sided Hilbert transform converges in the setting of Wiener-Wintner functions.

\begin{proof}
Take $f$ a Wiener-Wintner function of power type $\alpha>0$ and $\beta>0$ such that $\beta\alpha>1.$
We have
$$\int \sum_{N=1}^{\infty}\left\|\sup_t\left| \frac{1}{N^{\beta}}\sum_{n=1}^{N^{\beta}} f(T^nx) e^{2\pi int}\right|\right\|_1 \leq C \sum_{N=1}^{\infty} \frac{1}{N^{\beta\alpha}}<\infty.$$
We can fix $x\in X_f$, a set of full measure, such that
$$\sum_{N=1}^{\infty}\sup_t\left| \frac{1}{N^{\beta}}\sum_{n=1}^{N^{\beta}} f(T^nx) e^{2\pi int}\right|<\infty.$$
This set is independent of the dynamical system $(Y, \mathcal{G}, \nu).$ By the spectral theorem we have for each positive integer $N$
$$\int \left|\frac{1}{N}\sum_{n=1}^N f(T^nx) g(S^ny)\right|d\nu \leq C\sup_{\theta}\left|\frac{1}{N}\sum_{n=1}^N f(T^nx) e^{2\pi in\theta}\right|.$$
Therefore for  $x\in X_f$ we derive the following
$$\sum_{N=1}^{\infty} \left|\frac{1}{N^{\beta}}\sum_{n=1}^{N^{\beta}} f(T^nx) g(S^ny)\right|<\infty.$$
This implies that for $\nu$-a.e $y$
$$\frac{1}{N^{\beta}}\sum_{n=1}^{N^{\beta}} f(T^nx) g(S^ny)$$
converges to zero.

For the general sequence we consider for each integer $M$ the unique $N$ integer such that $N^\beta \leq M < (N+1)^{\beta}.$  Then we can write
$$\frac{1}{M}\sum_{n=1}^M f(T^nx)g(S^ny) = \frac{N^{\beta}}{M}\frac{1}{N^{\beta}}\sum_{n=1}^{N^{\beta}}f(T^nx)g(S^ny) + \frac{1}{M}\sum_{n=N^{\beta}}^Mf(T^nx)g(S^ny).$$
The last term goes to zero as it is dominated in absolute value by
$$\|g\|_{\infty}\frac{1}{M}\sum_{n=N^\beta}^{(N+1)^{\beta}}|f(T^nx)|$$
(in the case where $g\in L^{\infty}$) which is equal to
$$\frac{1}{M}\sum_{n=1}^{(N+1)^\beta}|f(T^nx)| - \frac{1}{M} \sum_{n=1}^{N^\beta}|f(T^nx)|.$$ And this last quantity goes to zero by the pointwise ergodic theorem for the function $|f|$ and the fact that the limit of $\frac{M}{N^{\beta}}$ is one.

The same argument works for the one sided series
$$\sum_{n=1}^{\infty}\frac{f(T^{-n}x)g(S^ny)}{n}$$
because for a $f$ which is a Wiener-Wintner function of power type $\alpha$ we also have for each $N$
$$\left\|\sup_t\left|\frac{1}{N}\sum_{n=1}^N f(T^{-n}x)e^{2\pi int}\right|\right\|_1 \leq \frac{C}{N^{\alpha}}.$$
\end{proof}

With some extra work using a truncation method one can prove the above convergence (a.e. $\nu$) for functions $g\in L^r(\nu)$ for $1<r\leq \infty.$

As shown in \cite{AssaniSCO} not all ergodic systems have such a dense set of functions in $\mathcal{K}^{\perp}$ even with a rate as slow as a logarithm. But interesting systems like $K$-automorphisms do.

\section{Conclusion}

For interested readers, there are several possible directions of study concerning return times to consider in addition to those mentioned above.

For the Cesaro averages one could extend this study to any good averaging process. For instance consider an increasing sequence of natural numbers $p(n)$ which is good for the pointwise convergence in $L^p$ (such as the sequence of squares in $L^p$ (for $1 < p \leq \infty$ \cite{BourgainDRA}).

\begin{ques}
Do the averages $\frac{1}{N}\sum_{n=1}^N f(T^{p(n)}x) g(S^ny)$ converge a.e. $\nu$?
\end{ques}

\begin{ques}\label{Q2}
Do the averages $ \frac{1}{N}\sum_{n=1}^N f(T^{p(n)}x)g(S^{p(n)}y)$ converge a.e. $\nu$?
\end{ques}

\begin{ques} For the previous two questions, what is the range of functions for which the result is true?
\end{ques}

\begin{ques}
Can one have a multiple term return times result for the averages above?
\end{ques}

\begin{ques}\label{Q5}
What would be the characteristic factors for the averages in referenced in the above questions?
\end{ques}

One could also look at the same questions translated to the corresponding Hilbert Transform.  Note that the notion of characteristic factors is not the same for the Hilbert transform and the Cesaro averages. One has to think instead in terms of the a.e. continuity of the limit of the series. For instance for the averages in \textbf{Question \ref{Q2}} the corresponding averages to study with respect to \textbf{Question \ref{Q5}} would be the a.e continuity in $t$ of the series
$$\sum_{n=-\infty}^{\infty} \frac{f(T^{p(n)x}e^{2\pi ip(n)t}}{n}$$
once the function $f$ is orthogonal to the appropriate factor.

Some of these questions have been raised by the first author during problem sessions at the Ergodic Theory Workshops he has organized yearly at The University of North Carolina at Chapel Hill since the summer of 2002.

\bibliographystyle{amsalpha}
\newcommand{\etalchar}[1]{$^{#1}$}
\providecommand{\bysame}{\leavevmode\hbox to3em{\hrulefill}\thinspace}
\providecommand{\MR}{\relax\ifhmode\unskip\space\fi MR }
\providecommand{\MRhref}[2]{%
  \href{http://www.ams.org/mathscinet-getitem?mr=#1}{#2}
}
\providecommand{\href}[2]{#2}

\end{document}